\theoremstyle{plain}
\newtheorem{defi}{Definition}[section]
\newtheorem{teo}{Theorem}[section]
\newtheorem{prop}{Proposition}[section]
\newtheorem{cor}{Corollary}[section]
\newtheorem{lemma}{Lemma}[section]
\newtheorem{ex}{Example}[section]
\theoremstyle{definition}
\newtheorem{oss}{Remark}[section]
\newcommand\numberthis{\addtocounter{equation}{1}\tag{\theequation}}
\def    \bs     {\boldsymbol}
\title{\textbf{Universality of free homogeneous sums \\in every dimension}}
\date{}
\author{R. Simone \\ \tiny{Università degli Studi della Basilicata} \\ \tiny{Via dell'Ateneo Lucano, 10}\\ \tiny{85100 Potenza - Italy} \footnote{The paper was realized while the author was visiting the University of Luxembourg, joining the research team of Professor Giovanni Peccati.}} 
\begin{document}
\maketitle
\linespread{1.2}

\abstract{

We prove a general multidimensional invariance principle for a family of U-statistics based on freely independent non-commutative random variables of the type $U_n(S)$, where $U_n(x)$ is the $n$-th Chebyshev polynomial and $S$ is a standard semicircular element on a fixed $W^{\ast}$-probability space.
As a consequence, we deduce that homogeneous sums based on random variables of this type are universal with respect to both semicircular and free Poisson approximations.

Our results are stated in a general multidimensional setting and can be seen as a genuine extension of some recent findings by Deya and Nourdin; our techniques are based on the combination of the free Lindeberg method and the Fourth moment Theorem.}

\textbf{Keywords}:Chebyshev polynomials; Free Probability; Homogeneous sums; Lindeberg method; Universality; Wigner chaos.

\textbf{AMS subject classifications}: 46L54; 60H05; 60F17

\tableofcontents
\section{Introduction}

Roughly speaking, a universality result (or invariance principle) is a mathematical statement implying that the asymptotic behaviour of a given random system does not depend on the distribution of its components.

The aim of this paper is to prove new universality results involving polynomials in freely independent random variables. We shall also provide explicit comparisons with several analogous phenomena in the classical setting.

Our basic framework will be the following (see the Section 2 for some relevant definitions and background results). Let $(\mathcal{A},\varphi)$ be a non-commutative probability space, and let $\{S_i\}_i$ be a collection of freely independent standard semicircular random variables defined on it. Let $d\geq 2$ be an integer, and 
$$Q_N(x_1,\dots,x_N) = \sum\limits_{i_1,\dots,i_d=1}^{N}f_N(i_1,\dots,i_d)x_{i_1}\cdots x_{i_d}$$ be a homogeneous polynomial in non-commuting variables of degree $d$. We shall assume that the kernels $f_N:[N]^{d}\rightarrow \mathbb{R}$ are mirror symmetric functions. 

The following statement contains findings from \cite[ Theorem 1.3, 1.6]{NourdinPeccatiSpeicher} (Part A) and from \cite[Theorem 1.3]{NourdinDeya} (Part B) :
\begin{teo}
\label{premessa} $ $
\begin{itemize}
\item[A] (Fourth moment Theorem) If $\varphi\big(Q_N(S_1,\dots,S_N)^2\big)=1$, and $\mathcal{S}\sim \mathcal{S}(0,1)$ denotes a standard semicircular random variable, freely independent of $\{S_i\}$, the following statements are equivalent as $N$ goes to infinity:
\begin{enumerate}
\item $Q_N(S_1,\dots,S_N) \stackrel{\text{ law }}{\longrightarrow} \mathcal{S}(0,1)$;
\item $\varphi\big(Q_N(S_1,\dots,S_N)^4\big)  \longrightarrow \varphi(\mathcal{S}^4)= 2$;
\item if $g_N = \sum\limits_{i_1,\dots,i_d=1}^{N}f_N(i_1,\dots,i_d)e_{i_1}\otimes\cdots \otimes e_{i_d}$, with $\{e_j\}_j$ an orthonormal basis of $\mathrm{L}^2(\mathbb{R}_+)$, all the non trivial contractions of the kernels $g_N$ vanish in the limit, namely \linebreak $\|g_N \stackrel{r}{\smallfrown}g_N \|_{L^{2}(\mathbb{R}_{+}^{2m-2r})} \longrightarrow 0 $ for every $r=1,\dots, d-1$;
\end{enumerate}
\item[B](\textrm{Universality of semicircular elements}) If the kernels $f_N$ are symmetric functions, the following statements are equivalent as $N$ goes to infinity:
\begin{enumerate}
\item $Q_N(S_1,\dots,S_N)  \stackrel{\text{ law }}{\longrightarrow} \mathcal{S}(0,1)$;
\item $Q_N(X_1,\dots,X_N) \stackrel{\text{ law }}{\longrightarrow} \mathcal{S}(0,1)$ for any sequence $\{X_i\}_i$ of freely independent centered random variables having unit variance.
\end{enumerate}
\end{itemize}
\end{teo}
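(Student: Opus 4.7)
The plan is to treat the two parts by quite different mechanisms, since Part A is an algebraic/spectral statement about a single Wigner chaos, while Part B is an invariance principle.

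For Part A the natural route is via the diagrammatic formula for moments of the multiple Wigner integral $I_d^W(g_N)=Q_N(S_1,\dots,S_N)$. The crucial algebraic identity, proved by enumerating non-crossing pair partitions of $\{1,\dots,4d\}$, should read
$$\varphi\bigl(I_d^W(g_N)^4\bigr)-2\,\varphi\bigl(I_d^W(g_N)^2\bigr)^{2}=\sum_{r=1}^{d-1}c_{r}\,\bigl\|g_N\stackrel{r}{\smallfrown}g_N\bigr\|^{2}_{L^{2}}$$
with explicit non-negative coefficients $c_r$ depending only on $d$. This delivers $(2)\Leftrightarrow (3)$ at once. Then $(3)\Rightarrow(1)$ follows by the method of moments: the free product formula for Wigner integrals expands each higher moment of $I_d^W(g_N)$ as a sum over non-crossing pair partitions, each summand being a norm of an iterated contraction of $g_N$; the hypothesis kills every term except the ``trivial'' (fully non-nested) pairings, whose count equals the corresponding moment of the standard semicircular law. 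For $(1)\Rightarrow(2)$ one invokes the hypercontractivity of a fixed Wigner chaos (in the spirit of Biane and of Kemp--Speicher) to convert convergence in distribution into convergence of the fourth moment.

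For Part B the direction $(2)\Rightarrow(1)$ is immediate, since $\{S_i\}$ satisfies the hypotheses placed on $\{X_i\}$. For $(1)\Rightarrow(2)$ the natural tool is a free Lindeberg replacement scheme: define the interpolants $Y_N^{(k)}=Q_N(X_1,\dots,X_k,S_{k+1},\dots,S_N)$ and estimate the telescoping differences $Y_N^{(k)}-Y_N^{(k-1)}$ against a sufficiently smooth functional test class, exploiting the equality of first two moments of $S_k$ and $X_k$ together with a noncommutative Taylor expansion in the $k$-th coordinate. The zeroth- and first-order terms cancel by the moment matching; the second-order terms cancel by the unit-variance normalization; and the remainder is bounded by a third-order ``influence'' of index $k$. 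Full symmetry of $f_N$ ensures that summing these influence bounds over $k$ reconstructs contraction-type norms of $g_N$, which vanish thanks to (1) combined with Part A.

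The main obstacle I anticipate is the clean contraction identity driving Part A. The free product formula produces, a priori, a profusion of mixed terms involving partial contractions of several orders at once, and it is not obvious that they recombine into a single sum of squared contraction norms with non-negative coefficients. This requires careful bookkeeping of orientations on non-crossing diagrams, and it is precisely this rigidity that explains why Part B must assume \emph{full} symmetry of $f_N$ rather than only the mirror symmetry used in Part A.
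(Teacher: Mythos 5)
The paper itself offers no proof of Theorem \ref{premessa}: Part A is quoted from Kemp--Nourdin--Peccati--Speicher and Part B from Deya--Nourdin, and the paper's own contribution (Theorems \ref{Multiinvariance2} and \ref{MultUniversality}, specialized to $n=1$ and $h_j=1$) generalizes Part B by essentially the route you describe, so your outline is the standard one and is correct in substance. Three points deserve sharpening. First, the ``clean contraction identity'' you flag as the main obstacle in Part A is exactly the known formula $\varphi\big(I_d^{S}(g_N)^4\big)=2\|g_N\|^4+\sum_{r=1}^{d-1}\|g_N\stackrel{r}{\smallfrown}g_N\|^2$, valid for \emph{mirror-symmetric} kernels: mirror symmetry is precisely what turns each crossing pairing term into a genuine squared contraction norm with coefficient one, so no delicate bookkeeping of orientations is needed, and this is \emph{not} where the full-symmetry hypothesis of Part B originates. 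Second, for $(1)\Rightarrow(2)$ in Part A the relevant tool is the Haagerup-type bound $\|I_d^{S}(g_N)\|\leq (d+1)\|g_N\|_{L^2}$, which gives uniformly compact spectra and lets weak convergence imply fourth-moment convergence; ``hypercontractivity'' is a loose name for this, but the idea is right. Third, in the free Lindeberg step there is no smooth test class: convergence in law here is convergence of moments, so one telescopes $\varphi\big(Q_N(\cdot)^m\big)$ directly, splits $Q_N=W^{(i)}+V^{(i)}$ according to the occurrences of the $i$-th variable, cancels the terms of $V$-degree at most $2$ by moment matching, and bounds the remaining terms by $\mathrm{Inf}_i(f_N)^{3/2}$ via iterated Cauchy--Schwarz and H\"older estimates. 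The one genuinely missing ingredient in your sketch is the inequality $\max_i\mathrm{Inf}_i(f_N)\leq d\,\|f_N\stackrel{d-1}{\smallfrown}f_N\|$ (the paper's Lemma \ref{magg}): it is this comparison of the \emph{maximal} influence (not the sum of the error terms) with a contraction norm that lets hypothesis (1), combined with Part A, force the Lindeberg error $O(\tau_N^{1/2})$ to vanish, and it is here --- not in the diagrammatics --- that full symmetry of $f_N$ is actually used (the paper's counterexample $\frac{1}{\sqrt{2N-2}}\sum_i(x_1x_i+x_ix_1)$ shows the comparison, and universality, fail for merely mirror-symmetric kernels).
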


In \cite[Theorem 3.18]{Mossel}, the authors established an invariance principle for multilinear homogeneous polynomials in random variables living on a classical probability space.
The combination with the fourth moment Theorem \cite[Theorem 1]{Nualart} allowed then to prove that the Gaussian distribution satisfies a universality phenomenon for homogeneous sums with respect to the Gaussian approximation (see \cite[Theorem 1.2,1.10]{NourdinPeccati4} for both unidimensional and multidimensional frameworks). Similar results have been established for the discrete Poisson chaos (see \cite[Theorem 3.4]{PeccatiZeng}  and \cite{PeccatiZeng2}). The fourth moment Theorem was then extended to Wigner stochastic integrals, both with respect to semicircular and free Poisson approximations (see \cite{NourdinPeccatiSpeicher} and \cite[Theorem 1.4]{NourdinPeccati} respectively). See moreover \cite[Theorem 1.3]{NouSpeiPec} for a multidimensional version of the fourth moment Theorem as to semicircular approximations. 
In \cite{NourdinDeya} the authors provided an invariance principle for homogeneous polynomials in freely independent random variables living in a non-commutative probability space: as a consequence, they were able to deduce Part B of Theorem \ref{premessa}, showing that the semicircular distribution behaves universally for homogeneous sums (with symmetric kernels) with respect to semicircular distribution, providing therefore the free counterpart to \cite{NourdinPeccati4}.

\par In this paper, we are interested in the following three questions, connected to Part B of the above statement:
\begin{enumerate}
\item are there other ``universal laws''  verifying the property at Point B? In other words, is it possible to find another sequence of freely independent r.v.'s $\{Y_i\}_i$ such that if $Q_N(Y_1,\dots,Y_N)$ converges in law to a semicircular element, then $Q_N(X_1,\dots,X_N)$ has the same asymptotic behaviour for any other sequence $\{X_i\}$ of freely independent random variables?
\item Is it possible to prove a similar universality result if we consider the free Poisson distribution (or other laws) as a limit?
\item Is it possible to extend Point B of Theorem \ref{premessa} to a multidimensional setting?
\end{enumerate}
We will provide a positive answer to all the three questions in a unified way. To this aim, we will introduce the concept of \textit{Chebyshev sum}: in its simplest form (see Section 2 for the general definition), a Chebyshev sum is a polynomial of the type $$Q_N^{(h)}(x_1,\dots,x_N) = \sum_{i_1,\dots,i_d=1}^{N} f_N(i_1,\dots,i_d) U_h(x_{i_1})\cdots U_h(x_{i_d}),$$ where $U_h(x)$ denotes the $h$-th Chebyshev polynomial (of the second kind) on the interval $[-2,2]$.

Our main achievements can be summarized as follows:
\begin{itemize}
\item[-] In Section 3, we will provide an invariance principle for vectors of Chebyshev sums of any dimension, having the same nature as the main result of \cite{NourdinDeya}, which in turn generalizes the findings of \cite{Mossel} to a free probability setting;
\item[-] in Section 4, from the invariance principle and considering symmetric kernels, we will prove that vectors of Chebyshev sums based on a semicircular system are universal with respect to both semicircular and free Poisson approximations. The semicircular universality result is a genuine extension of Part B of the Theorem \ref{premessa}, showing that semicircular random variables are universal for homogeneous sums with respect to the semicircular approximation.

To our knowledge, the Poisson result is the first universality statement for the Free Poisson law proved in a free setting: in particular, for one-dimensional vectors, it is the free counterpart to \cite{NourdinPeccati4}. One should also note that, in the classical case, the only law that is known to be universal with respect to the Gamma limit is the Gaussian one, whereas our results allow one to display a new infinite collection of universal distributions with respect to the free Poisson approximations. 

More generally, our findings are the first multidimensional universality results for homogeneous sums proved in a free setting: as such, they complement \cite{NourdinPeccati4}.
\end{itemize}

To make the presentation more reader-friendly, the most technical proofs are gathered together in the last section, while the Appendix contains some relevant statements from the literature. 

\section{Preliminaries}
\subsection{Elements of free probability}
In the present subsection, we shall summarize the basic tools and results of free probability theory that will be used in the rest of the paper. Note that we only aim at giving a brief overview of the subject: the reader is referred to the fundamental references \cite{Speicher} and \cite{Voiculescu} for a more detailed presentation.
\begin{itemize}
\item[(i)]
A \textit{$W^{\ast}$-probability} space is a pair $(\mathcal{A},\varphi)$, where $\mathcal{A}$ is a von Neumann algebra of operators, with unity $1$, and $\varphi: \mathcal{A} \rightarrow \mathbb{C}$ is a unital linear functional on it, satisfying the following properties:
\begin{enumerate}
\item $\varphi$ is a trace: $\varphi(a b)= \varphi(b a)$ for every $a, b \in \mathcal{A}$;
\item $\varphi$ is positive: if $a^{\ast}$ denotes the adjoint of an element $a \in \mathcal{A}$, then $\varphi(a a^{\ast}) \geq 0$;
\item $\varphi$ is faithful: $\varphi(a a^{\ast}) = 0$ implies that $a=0$.
\end{enumerate}
\item[(ii)] In the literature, it is customary to refer to the self-adjoint elements of a $W^{\ast}$-probability space as \textit{random variables}. If $a$ is a random variable in $\mathcal{A}$, the elements of the sequence $\{\varphi(a^{m}): m \in \mathbb{N}\}$ are called the moments of $a$. A random variable $a$ with zero mean ($\varphi(a) = 0$) will be called centered; if a random variable $b$ is not centered, we call $b- \varphi(b)1$ the centering of $b$. 

For a random variable $a$, the \textit{spectral radius} is defined as $\rho(a)=\lim\limits_{k \rightarrow \infty}|\varphi(a^{2k}) |^{\frac{1}{2k}}$; if $\rho(a)$ is finite, then $a$ is called a bounded random variable. 
Indeed, for every bounded random variable $a$, there exists a real measure $\mu_{a}$ with compact support included in $[-\rho(a), \rho(a)]$ (called the law, or the \textit{distribution} of $a$), that allows us to represent the moments of $a$ (see \cite{Speicher}):
$$ m_k(a) = \varphi(a^{k}) = \int_{\mathbb{R}}x^{k}\mu_{a}(dx).$$
\item[(iii)] Thanks to the positivity of the state $\varphi$, we have the following Cauchy-Schwarz type inequality: for every $a, b \in \mathcal{A}$, $ |\varphi(a b^{\ast}) |^{2} \leq \varphi(a a^{\ast}) \varphi(b b^{\ast}).$
\item[(iv)] The unital subalgebras $\mathcal{A}_1, \dots, \mathcal{A}_n$ of $\mathcal{A}$ are said to be \textit{freely independent} if, for every $k \geq 1$, for every choice of integers $i_1,\dots, i_k$ with $i_j \neq i_{j+1}$, and random variables $a_{i_j} \in \mathcal{A}_j$, we have $\varphi(a_{i_1}a_{i_2}\cdots a_{i_k}) = 0$. Random variables $a_1,\dots, a_n$ are said to be freely independent if the (unital) subalgebras they generate are freely independent.
\item[(v)]
Recall that a partition $\pi$ of the set $[n]=\{1,2,\dots,n\}$ is a collection of nonempty and pairwise disjoint subsets of $[n]$, whose union is the whole set $[n]$. A partition $\pi$ is said to be \textit{non-crossing} if, whenever there exist integers $i < j < k < l$, with $i\sim_{\pi} k$, $j \sim_{\pi} l$, then $j \sim_{\pi} k$ (here, $i \sim_{\pi} j$ means that $i$ and $j$ belong to the same block of $\pi$). The lattice of the non-crossing partitions, denoted by $\mathcal{NC}([n])$, is the combinatorial structure underlying the free probability setting.
\item[(vi)] For $\pi \in \mathcal{NC}([n])$, the \textit{free cumulant} $r_{\pi}(a)$ of a random variable $a$ is the multiplicative function on $\mathcal{A}^n$ satisfying the formula:
$$ m_{n}(a) = \sum_{\pi \in \mathcal{NC}([n])} r_{\pi}(a),$$ 
with $r_\pi(a)= \prod_{b\in \pi} r_{|b|}(a)$, or equivalently,
$$r_n(a) = \sum_{\pi \in \mathcal{NC}([n])}  \mu(\pi, \hat{1}) m_{\pi}(a)$$
with $\mu(\pi,\hat{1})$ denoting the M\"{o}bius function on the interval $[\pi, \hat{1}]$ (see \cite[Chapter 11]{Speicher} for more details). The first four cumulants are:
\begin{enumerate}
\item  $r_1(a) = \varphi(a)$, the mean;
\item  $r_2(a) = m_2(a) - m_1(a)^2$, called the variance;
\item  $r_3(a) = 2m_1^3 + m_3 - 3m_2 m_1$;
\item $r_4(a) = m_4 -2m_2^2 +10 m_2 m_1^2  - 4m_1 m_3 - 5m_1^4$.
\end{enumerate}
\item[(vii)] A centered random variable $s \in \mathcal{A}$ is called a \textit{semicircular element} of parameter $\sigma^2 > 0$ (for short, $s \sim \mathcal{S}(0,\sigma^2)$) if its distribution is the Wigner semicircle law on $[-2\sigma,2\sigma]$ given by:
$$ \mathcal{S}(0,\sigma^2)(dx) = \dfrac{1}{2\pi \sigma^2}\sqrt{4\sigma^2 - x^{2}} dx. $$
If $\sigma=1$, $s$ is called a standard semicircular random variable. 

The even moments of a semicircular element $s$ of parameter $\sigma^2$ are given by:
$$ \int_{-2\sigma}^{2\sigma}x^{2m}\mathcal{S}(0,\sigma^2)(dx) = C_m \sigma^{2m},$$
with $\{C_m\}_{m\in \mathbb{N}}$ being the sequence of the Catalan numbers, namely $C_m = \dfrac{1}{m+1}\binom{2m}{m}$, while all its odd moments are zero. Equivalently, $r_1(s)=0$, $r_2(s) = \sigma^{2}$ and $r_n(s) = 0$ for all $n\geq 3$.
\item[(viii)]
A random variable $X(\lambda) \in \mathcal{A}$ is called a \textit{free Poisson element} of parameter $\lambda > 0$ if its distribution has the form:
$$p(\lambda)(dx) = (1-\lambda)\delta_0 + \lambda \tilde{\nu} \qquad  \text{ for } \lambda \leq 1, $$
$$ p(\lambda)(dx) = \dfrac{1}{2\pi x}\sqrt{4\lambda -(x-\lambda-1)^{2}}\; \mathbb{1}_{((1-\sqrt{\lambda})^{2},(1+\sqrt{\lambda})^{2})}(dx), \qquad \text{ for } \lambda > 1.$$
Let us denote by $Z(\lambda)$ a centered free Poisson random variable of parameter $\lambda$, namely \linebreak$Z(\lambda) = X(\lambda) - \lambda 1$. As shown in \cite[Proposition 2.4]{NourdinPeccati}, the moments of $Z(\lambda)$ are given by:
$$ \varphi\big(Z(\lambda)^{m}\big) = \sum_{j=1}^{m}\lambda^{j}R_{m,j},$$
with $R_{m,j}$ counting the number of non-crossing partitions in $\mathcal{NC}([m])$ having no singletons and having exactly $j$ blocks. In particular, if $\lambda =1$, $ \varphi\big(Z(1)^{m}\big) =  R_m$, the $m$-th Riordan number, counting the number of non-crossing partitions in $\mathcal{NC}([m])$ having no singletons. 
Equivalently, $r_1(Z(\lambda))=0$ and $r_n(Z(\lambda)) = \lambda$ for all $n\geq 2$.
\item[(ix)]
We now discuss Wigner Stochastic integration, a theory first developed in \cite{BianeSpeicher}.

For every $p: 1 \leq p < \infty$, let us denote by $L^{p}(\mathcal{A},\varphi)$ the space obtained by completion of $\mathcal{A}$ with respect to the norm $\| a\|_p = \varphi(|a|^{p})^{\frac{1}{p}}$, with $|a|$ such that $|a|^{2} = a^{\ast}a$. 

If $\{\mathcal{A}_t\}_{t \geq 0}$ denotes a filtration of  unital subalgebras of $\mathcal{A}$ (namely, $\{\mathcal{A}_t\}_{t \geq 0}$ is an increasing sequence of subalgebras: $\mathcal{A}_s \subset \mathcal{A}_t$ for $s \leq t$ ), we define a \textit{free Brownian motion} as a collection $S = \{S(t)\}_{t \geq 0}$ of self-adjoint operators in $(\mathcal{A},\varphi)$ such that:
\begin{enumerate}
\item for every $t \geq 0$, $S(t)\sim \mathcal{S}(0,t)$ and $S(t) \in \mathcal{A}_t$;
\item (stationary increments) for every $0 \leq t_1 < t_2$, the increment $S(t_2) - S(t_1)$ has the same distribution as $S(t_2-t_1)$;
\item (freely independent increments) for every $0 \leq t_1 < t_2$, the increment $S(t_2) - S(t_1)$ is freely independent of $\mathcal{A}_{t_1}$.
\end{enumerate}

Let $q \geq 2$ be an integer. A function $f \in L^{2}(\mathbb{R}_{+}^{q})$ is said to be \textit{mirror symmetric} if \linebreak $f(t_1, t_{2},\dots,t_q) = f(t_q,\dots,t_2,t_1)$ for every $t_1,\dots,t_q \in \mathbb{R}_+$.

More generally, for a complex valued kernel $f$, we say that $f$ is mirror symmetric if \linebreak $f(t_1, t_{2},\dots,t_q) = \overline{f(t_q,\dots,t_2,t_1)}$, for every $t_1,\dots,t_q \in \mathbb{R}_+$, where $\overline{f(t_q,\dots,t_2,t_1)}$ denotes the complex conjugate of $f(t_q,\dots,t_2,t_1)$. In the following, we will deal only with real-valued kernels, as in \cite{NourdinDeya}, the extension to the complex case being unnecessary for our purposes, but still approachable by the same strategy.

Given a free Brownian motion $S$ on $(\mathcal{A},\varphi)$, the construction of the Wigner stochastic integral (that is, the stochastic integral with respect to a free Brownian motion) requires exactly the same steps as those included in the definition of the classic Wiener-It\^{o} integrals with respect to a (classical) Brownian motion. 
\begin{defi}
Let $f$ be a simple function (vanishing on diagonals) in $L^{2}(\mathbb{R}_{+}^{q})$, namely \linebreak $f= \prod\limits_{j=1}^{q}\mathbf{1}_{(a_j,b_j)}$, with $(a_j,b_j)$ pairwise disjoint intervals of the real line. Then we set:
$$I_q^{S}(f) = \big(S(b_1)-S(a_1)\big)\cdots (S(b_q)-S(a_q)).$$
\end{defi}
By linearity, the last definition can be extended to every function that is a finite linear combination of simple functions vanishing on diagonals. As for the Wiener stochastic integration, for such functions the Wigner integrals satisfy the isometric relation:
$$ \langle I_q^{S}(f), I_q^{S}(g)\rangle_{L^{2}(\mathcal{A},\varphi)} = \langle f,g\rangle_{L^{2}(\mathbb{R}_+^{q})},$$
that allows us to define the Wigner integral for any $f \in L^{2}(\mathbb{R}_{+}^{q})$ (by a density argument). 
Moreover, it is easy to check that $I_q^{S}(f)$ is self-adjoint if and only if $f$ is mirror symmetric.

The sequence of the Chebyshev polynomials (of the second kind), defined by the recurrence relation $U_{0}(x) = 1$, $U_1(x) = x$, and $U_{m+1}(x) = xU_m(x) - U_{m-1}(x) \; \text{ for every } m \geq 1,$ is an orthogonal family of polynomials  with respect to the semicircle Wigner law $s(dx) = \dfrac{1}{2\pi}\sqrt{4-x^2}dx$ on the interval $[-2,2]$ (for more details, see \cite{Anshelevich, Chihara}). In the framework of the Wigner stochastic integration, this family of polynomials play the same role as the Hermite polynomials for the multiple integrals of Wiener-It\^{o} type (see e.g. \cite[Chapter 2]{Peccatilibro}).

 In particular, for every $k\geq 1$ and for every choice of integers $m_1, \dots,m_k$, it can be shown that (see \cite{Anshelevich},\cite{BianeSpeicher}):
\begin{equation}
\label{Integral}
U_{m_1}(S_{i_1})U_{m_{2}}(S_{i_2})\cdots U_{m_k}(S_{i_k}) = I_{m}^{S}\big(e_{i_1}^{\otimes m_1}\otimes e_{i_2}^{\otimes m_2}\otimes \cdots \otimes e_{i_k}^{\otimes m_k}\big),
\end{equation}
provided that $i_1 \neq i_2 \neq \cdots \neq i_k$, and $m=m_1+\cdots +m_k$, with $\{e_j\}_i$ orthonormal basis of $\mathrm{L}^{2}(\mathbb{R}^{+})$ and $\{S_j\}_j$ the associated free Brownian motion, with $S_j = I_1^{S}(e_j)$. Note that $\{S_j\}_j$ is a sequence of freely independent standard semicircular elements.
\item[(x)] Last, let us recall the connection between the free Poisson distribution with integer parameter $p$ and the standard semicircle law. Indeed, $U_2(S) \stackrel{\text{law}}{=} Z(1)$, and more generally, $Z(p) \stackrel{\text{law}}{=} \sum_{j=1}^{p}(S_j^{2}-1)$, with $S_1,\dots,S_p$ freely independent standard semicircular elements (see \cite{NourdinPeccati}).

\end{itemize}

\subsection{Notation and other preliminaries}

Let $\{x_i\}_{i \in \mathbb{N}}$ be a sequence of non-commutative variables. In the next definition we shall introduce one of the main objects of the paper.
\begin{defi}
Let $d \geq 1$ be an integer, and $\bs{h}=(h_1,\dots,h_d)$ be a vector of positive integers such that $h_i = h_{d-i+1}$ for every $i=1,\dots, \lfloor \frac{d}{2}\rfloor$ if $d\geq 2$. For every integer $N$, let $f_N:[N]^{d}\rightarrow \mathbb{R}$ be a kernel  verifying the following properties: \footnote{Of course the properties (i) and (ii) are non-trivial only if $d \geq 2$.}
\begin{itemize}
\item[(i)] mirror symmetry: $f_N(i_1,\dots,i_d) = f_N(i_d,\dots,i_1)$ for every $i_1,\dots, i_d \in \{1,\dots,N\}$;
\item[(ii)] vanishing on diagonals: $f_N(i_1,\dots,i_d) = 0$ whenever $i_j = i_k$ for $j\neq k$;
\item[(iii)] unit variance: 
\begin{equation}
\label{variance}
\mathrm{Var}(f_N) = \|f_N\|^{2}:= \sum\limits_{i_1,\dots,i_d=1}^{N}f_N(i_1,\dots,i_d)^2 = 1.
\end{equation}
\end{itemize}
Then, we define the \textbf{Chebyshev sum} of orders $\bs{h}= (h_1,\dots,h_d)$ and kernel $f_N$ by the formula:
\begin{equation}
\label{SumCheby2}
Q_N^{(\bs{h})}(f_N; x_1,\dots,x_N) = \sum_{i_1,\dots,i_d=1}^{N}f_N(i_1,\dots,i_d)U_{h_1}(x_{i_1})\cdots U_{h_{d}}(x_{i_d}).
\end{equation}
\end{defi}

Note that if we choose $h_i=1$ for every $i=1,\dots,d$, the corresponding Chebyshev sum is nothing but a homogeneous polynomial $Q_N$ of degree $d$: 
\begin{equation}
\label{QN}
Q_N(x_1,\dots, x_N) = \sum_{i_1,\dots, i_d=1}^{N}f_N(i_1,\dots,i_d)x_{i_1}\cdots x_{i_d}.
\end{equation}

\begin{oss}
The condition $h_i = h_{d-i+1}$ may look a bit artificial, but as we will see, it is needed to ensure that $Q_N^{(\bs{h})}(f_N; X_1,\dots,X_N)$ is a self-adjoint polynomial, for $X_j$ self-adjoint in $\mathcal{A}$.
\end{oss}
As in several other papers concerning our subject, many steps in the sequel will be described in terms of the contraction operators (see, for instance, \cite{NourdinDeya, NourdinPeccatiSpeicher}).

\begin{defi}
\label{starcontraction}
Let $f, g:[N]^{d} \rightarrow \mathbb{R}$. For every $r=1,\dots, d$, we define the (discrete) star contraction as the function $f \star_{r}^{r-1} g : [N]^{2d - 2r +1} \rightarrow \mathbb{R}$ given by:
\begin{align*}
f \star_{r}^{r-1} g (t_1&,\dots, t_{d-r},\gamma, s_1,\dots, s_{d-r}) = \\
&=\sum\limits_{i_1,\dots, i_{r-1}=1}^{N}f(t_1,\dots, t_{d-r},\gamma, i_1,\dots, i_{r-1})g(i_{r-1},\dots, i_{1}, \gamma,s_1,\dots,s_{d-r});
\end{align*}
and, for every $q=0,\dots, d$, we define the contraction of order $q$, $f\stackrel{q}{\smallfrown}g:[N]^{2d-2q}\rightarrow \mathbb{R}$, by the rule:
\begin{align*}
f_N \stackrel{q}{\smallfrown}& f_N (t_1,\dots, t_{d-q},s_1,\dots,s_{d-q}) = \\
&= \sum\limits_{i_{1},\dots,i_q=1}^{N}f_N(t_1,\dots,t_{d-q},i_1,\dots,i_q)f_N(i_q,\dots,i_1,s_1,\dots,s_{d-q}).
\end{align*}
\end{defi}

The contraction operator of the type $\stackrel{r}{\smallfrown}$ can be introduced for elements of the tensorial powers of any (possibly separable) Hilbert space $\mathcal{H}$, extending by linearity the following definition: for every $r=1,\dots, \min\{d,p\}$,
\begin{equation}
\big(e_{i_1}\otimes \cdots \otimes e_{i_d} \big) \stackrel{r}{\smallfrown} \big(e_{j_1}\otimes \cdots \otimes e_{j_p}\big) = \prod_{l=0}^{r-1}\langle e_{i_{d-l}}, e_{j_{l+1}}\rangle e_{i_1}\otimes \cdots \otimes e_{i_{d-r}}\otimes e_{j_{r+1}}\otimes \cdots \otimes e_{j_{p}},
\end{equation}
where $\langle \cdot, \cdot \rangle$ denotes the inner product on $\mathcal{H}$. In particular:
$$\big(e_{i_1}\otimes \cdots \otimes e_{i_d} \big) \stackrel{d}{\smallfrown} \big(e_{j_1}\otimes \cdots \otimes e_{j_d}\big) = \langle e_{i_1}\otimes \cdots \otimes e_{i_d}, e_{j_d}\otimes \cdots \otimes e_{j_1}\rangle_{\mathcal{H}^{\otimes d}} = \prod_{l=1}^{d}\langle e_{i_l}, e_{j_{d-l+1}} \rangle$$
(with $\langle \cdot,\cdot,\rangle_{\mathcal{H}^{\otimes d}}$ denoting the inner product on $\mathcal{H}^{\otimes d}$ induced by $\langle \cdot, \cdot\rangle$). Moreover observe that if $f \in \mathcal{H}^{\otimes p}$ and $g \in \mathcal{H}^{\otimes d}$, then $f \stackrel{r}{\smallfrown} g \in \mathcal{H}^{\otimes p+d-2r}$. 

\begin{ex}
If $\{e_i\}_i$ is an orthonormal sequence of $\mathcal{H}$, then:
\begin{enumerate}
\item $e_1 \otimes e_2 \otimes e_{3}  \stackrel{2}{\smallfrown} e_3 \otimes e_2 \otimes e_1 = \langle e_3,e_3\rangle \langle e_2,e_2 \rangle e_1\otimes e_{1} = e_1\otimes e_1;$
\item $e_1 \otimes e_2 \otimes e_3\stackrel{1}{\smallfrown} e_4 \otimes e_2 \otimes e_5 = \langle e_3,e_4\rangle e_1 \otimes e_2 \otimes e_2 \otimes e_5 =0$.
\item if $f_N(i,j)= \dfrac{1}{\sqrt{N-2}}$ for $i \neq j$, and $f_N(i,i) = 0$, then
$$f_N \stackrel{1}{\smallfrown} f_N(h,k) = 
\begin{cases}
1 & \text{ if } h \neq k \\
\dfrac{N-1}{N-2} & \text{ if } h=k
\end{cases}
.$$
\end{enumerate}
\end{ex}

\begin{oss}[On notation]
With a slight abuse of notation and when there is no risk of confusion, we will use the same symbol $\smallfrown$ for both the contractions of discrete kernels and the contraction operation over Hilbert spaces.  Similarly, the symbol of the norm $\|\cdot \|$ will be used for both the (square root) of the variance of a discrete kernel (as in (\ref{variance})) and for elements in the Hilbert space. Also in this case, the nature of the symbol will be clear from the context.
\end{oss}

$ $
From now on, consider fixed  a vector of positive integers $(h_1,\dots, h_d)$ with $h_i = h_{d-i+1}$ for all $i=1,\dots,\lfloor \frac{d}{2} \rfloor$, as well as a separable Hilbert space $\mathcal{H}$, with orthonormal basis $\{e_i\}_{i \in \mathbb{N}}$. For the fixed integers $h_j$'s and if $m=h_1 + \cdots + h_d$,  we will canonically associate the element $k_N$ in $\mathcal{H}^{\otimes m}$ defined by:
\begin{equation}
\label{kN1}
 k_N = \sum_{i_1,\dots,i_d=1}^{N}f_N(i_1,\dots,i_d)e_{i_1}^{\otimes h_1}\otimes \cdots \otimes e_{i_d}^{\otimes h_d},
 \end{equation}
 to every kernel $f_N:[N]^{d} \rightarrow \mathbb{R}$ as above.
\begin{oss}
In view of the constraints on $(h_1,\dots,h_d)$, $k_N$ is mirror symmetric (as a function of $m$ variables) if and only if $f_N$ is mirror symmetric (as a function of $d$ variables). 
\end{oss}

One of the staples of the entire paper is the following explicit connection between the norms of the contractions of the kernel $k_N$ defined in (\ref{kN1}) and the norms of the kernel $f_N$ (as defined in (\ref{variance})), whose proofs are straightforward.

\begin{prop}
\label{contraction5}
If $d\geq 1$, for any integer $N \geq 1$, fix positive integers $h_1,\dots, h_d\geq 1$ such that $h_i = h_{d-i+1}$ for every $i=1,\dots, \big\lfloor \frac{d}{2}\big\rfloor$ (if $d \geq 2$) \footnote{To simplify the notation, we will omit the subscripts for the norms $\|k_N \stackrel{r}{\smallfrown} k_N \|_{\mathcal{H}^{\otimes (2m-2r)}}$.}. Consider the mirror symmetric kernel given in (\ref{kN1}) in $\mathcal{H}^{\otimes m}$, where $m=h_1+\cdots +h_d$, and with mirror symmetric kernel $f_N$ over $[N]^{d}$. Then, for every $r=1,\dots, h_1+\cdots +h_d - 1$:
\begin{itemize}
\item[(i)] if $r= h_1 + \cdots + h_q$, for $q=1,\dots, d-1$, $$ \|k_N \stackrel{r}{\smallfrown} k_N \| = \|f_N \stackrel{q}{\smallfrown} f_N \|;$$
\item[(ii)] if $r= \sum\limits_{j=1}^{q-1}h_j + t$, for some $t=1,\dots, h_q -1$ and $q=1,\dots, d$, $$ \|k_N \stackrel{r}{\smallfrown} k_N \|  = \|f_N \star_{q}^{q-1} f_N \|. $$
\end{itemize}
\end{prop}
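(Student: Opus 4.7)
\medskip

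The strategy is a direct bookkeeping computation: expand $k_N \stackrel{r}{\smallfrown} k_N$ using the block structure of $k_N$ and the definition of the contraction on tensors, identify the coefficient of each surviving basis element with one of the two prescribed quantities, and conclude by orthonormality.

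Concretely, I would start by writing
$$k_N \stackrel{r}{\smallfrown} k_N = \sum_{i_1,\dots,i_d} \sum_{j_1,\dots,j_d} f_N(i_1,\dots,i_d)\, f_N(j_1,\dots,j_d)\, \bigl(e_{i_1}^{\otimes h_1}\!\otimes\!\cdots\!\otimes\! e_{i_d}^{\otimes h_d}\bigr)\stackrel{r}{\smallfrown}\bigl(e_{j_1}^{\otimes h_1}\!\otimes\!\cdots\!\otimes\! e_{j_d}^{\otimes h_d}\bigr).$$
By the definition of the contraction on simple tensors, the last $r$ flat positions of the first factor are paired (in reverse) with the first $r$ flat positions of the second, producing inner products $\langle e_{\cdot},e_{\cdot}\rangle$ that are either $0$ or $1$. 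Reading the block structure from right to left in the first tensor and from left to right in the second, the first $h_d$ pairings force $i_d=j_1$, the next $h_{d-1}$ pairings force $i_{d-1}=j_2$, and so on; here the hypothesis $h_i=h_{d-i+1}$ is what makes the block lengths match up on the two sides, so that a single scalar equality suffices to make a whole block of $h_i$ pairings simultaneously nontrivial.

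In case (i), $r=h_1+\cdots+h_q$ coincides with a block boundary, so the contractions consume exactly the last $q$ blocks of the first tensor against the first $q$ blocks of the second, yielding the constraints $i_{d-s+1}=j_s$ for $s=1,\dots,q$. After summing over these contracted indices, the coefficient of the surviving basis tensor $e_{i_1}^{\otimes h_1}\!\otimes\!\cdots\!\otimes\! e_{i_{d-q}}^{\otimes h_{d-q}}\otimes e_{j_{q+1}}^{\otimes h_{q+1}}\!\otimes\!\cdots\!\otimes\! e_{j_d}^{\otimes h_d}$ is precisely $f_N \stackrel{q}{\smallfrown} f_N(i_1,\dots,i_{d-q},j_{q+1},\dots,j_d)$ (after a harmless renaming of the summation variables to absorb the reversal). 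Since these surviving basis tensors are orthonormal across distinct index tuples $(i_1,\dots,i_{d-q},j_{q+1},\dots,j_d)$, taking $\|\cdot\|^2$ yields $\|k_N\stackrel{r}{\smallfrown}k_N\|^2=\|f_N\stackrel{q}{\smallfrown}f_N\|^2$.

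In case (ii), $r=h_1+\cdots+h_{q-1}+t$ with $1\le t<h_q$, so the contraction fully consumes the last $q-1$ blocks on each side, but only \emph{partially} cuts the $(d-q+1)$-th block of the first tensor against the $q$-th block of the second: this creates a \textbf{single} surviving common index $\gamma:=i_{d-q+1}=j_q$ that appears in both $f_N$-factors as well as in the surviving tensor (as $e_\gamma^{\otimes 2(h_q-t)}$ sitting in the middle position). Matching the resulting double sum with Definition \ref{starcontraction}, the coefficient of the surviving basis tensor is exactly $f_N\star_q^{q-1}f_N(i_1,\dots,i_{d-q},\gamma,j_{q+1},\dots,j_d)$; orthonormality then gives $\|k_N\stackrel{r}{\smallfrown}k_N\|^2=\|f_N\star_q^{q-1}f_N\|^2$. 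The only mild obstacle is keeping the reversal of summation indices straight and verifying that distinct index tuples $(i_1,\dots,i_{d-q},\gamma,j_{q+1},\dots,j_d)$ really do give orthogonal flat basis tensors in $\mathcal{H}^{\otimes(2m-2r)}$; both points are straightforward once the block lengths are tracked carefully with the symmetry $h_i=h_{d-i+1}$.
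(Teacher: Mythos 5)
Your computation is correct and is exactly the ``straightforward'' block-by-block expansion that the paper has in mind (it omits the proof, declaring it immediate): the mirror symmetry $h_i=h_{d-i+1}$ aligns the contracted blocks, the surviving coefficients are identified with $f_N\stackrel{q}{\smallfrown}f_N$ and $f_N\star_q^{q-1}f_N$ after the index reversal you note, and orthonormality of the surviving flat basis tensors gives the equality of norms. Nothing is missing; in particular your handling of the partially cut block in case (ii), with the single shared index $\gamma$ appearing in both kernels and in the residual factor $e_\gamma^{\otimes 2(h_q-t)}$, is the correct reading of Definition \ref{starcontraction}.
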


\begin{prop}
\label{contraction6}
Let $d\geq 1$. For fixed positive integers $h_1,\dots, h_d$ with $h_i = h_{d-i+1}$ for every $i=1,\dots, \big\lfloor \frac{d}{2}\big\rfloor$ (if $d \geq 2$), and such that $h_1+\cdots + h_d $ is even, consider the kernel $k_N$ given in (\ref{kN1}). Then, if $\alpha = \dfrac{h_1+\cdots +h_d}{2}$,
\begin{itemize}
\item[(i)] if $d$ is even (and so $h_1 + \cdots +h_d = 2(h_1 + \cdots + h_{\frac{d}{2}})$),
$$ \|k_N \stackrel{\alpha}{\smallfrown}k_N - k_N \| = \|f_N \stackrel{\frac{d}{2}}{\smallfrown}f_N - f_N \|;$$
\item[(ii)] if $d$ is odd, (and therefore $h_1 + \cdots +h_d = 2(h_1 + \cdots + h_{\frac{d-1}{2}}) + h_{\frac{d+1}{2}}$ is even if and only if $h_{\frac{d+1}{2}}$ is even),
$$ \|k_N \stackrel{\alpha}{\frown}k_N - k_N \| = \|f_N \star_{\frac{d+1}{2}}^{\frac{d-1}{2}}f_N - f_N \|.$$
\end{itemize}
\end{prop}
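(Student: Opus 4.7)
The plan is to exploit the fact that the tensor map $T: f \mapsto \sum f(i_1,\dots,i_d)\, e_{i_1}^{\otimes h_1}\otimes\cdots\otimes e_{i_d}^{\otimes h_d}$ is, thanks to the orthonormality of $\{e_i\}$, a linear isometry from the space of $d$-kernels to $\mathcal{H}^{\otimes m}$: indeed, expanding $\|T(f)\|^2$ produces $\prod_l\langle e_{i_l},e_{i'_l}\rangle^{h_l}=\delta_{i,i'}$. By linearity, both equalities in the statement will follow if I show that $k_N \stackrel{\alpha}{\smallfrown} k_N = T(g_N)$, where $g_N = f_N \stackrel{d/2}{\smallfrown} f_N$ in case (i) and $g_N = f_N \star_{(d+1)/2}^{(d-1)/2} f_N$ in case (ii); subtracting $k_N=T(f_N)$ and applying the isometry then yields the result.

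For case (i) ($d$ even, $\alpha=h_1+\cdots+h_{d/2}$), I would apply the definition of the tensor contraction to each summand and use orthonormality. The key observation is that the symmetry $h_i = h_{d-i+1}$ ensures that the last $\alpha$ positions of the first tensor decompose as $d/2$ blocks of sizes $h_d,h_{d-1},\dots,h_{d/2+1}$, which exactly match (after the order-reversal built into $\stackrel{r}{\smallfrown}$) the first $d/2$ blocks of the second tensor, of sizes $h_1,h_2,\dots,h_{d/2}$. The whole contraction therefore collapses to the product of Kronecker deltas $\delta_{i_d,j_1}\delta_{i_{d-1},j_2}\cdots\delta_{i_{d/2+1},j_{d/2}}$, and the surviving tensor is $e_{i_1}^{\otimes h_1}\otimes\cdots\otimes e_{i_{d/2}}^{\otimes h_{d/2}}\otimes e_{j_{d/2+1}}^{\otimes h_{d/2+1}}\otimes\cdots\otimes e_{j_d}^{\otimes h_d}$. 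After the change of summation variables dictated by the deltas, the coefficient in front of this tensor is precisely $(f_N\stackrel{d/2}{\smallfrown}f_N)(i_1,\dots,i_{d/2},j_{d/2+1},\dots,j_d)$, recognizing the definition of the discrete contraction of order $d/2$.

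For case (ii) ($d$ odd, $q=(d+1)/2$, $h_q$ even), the same mechanism works except that $\alpha=\sum_{s<q}h_s + h_q/2$ uses only half of the central block: the last $\alpha$ positions of the first tensor now consist of the blocks $h_d,\dots,h_{q+1}$ and the second half ($h_q/2$ entries) of the $e_{i_q}$-block, while the first $\alpha$ positions of the second tensor comprise $h_1,\dots,h_{q-1}$ and the first half of the $e_{j_q}$-block. Orthonormality again yields Kronecker deltas $\delta_{i_d,j_1}\cdots\delta_{i_{q+1},j_{q-1}}$ together with the additional $h_q/2$ copies of $\langle e_{i_q},e_{j_q}\rangle$, which collapse to $\delta_{i_q,j_q}$. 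After substituting the deltas and grouping, the surviving tensor is $e_{i_1}^{\otimes h_1}\otimes\cdots\otimes e_{i_{q-1}}^{\otimes h_{q-1}}\otimes e_{i_q}^{\otimes h_q}\otimes e_{j_{q+1}}^{\otimes h_{q+1}}\otimes\cdots\otimes e_{j_d}^{\otimes h_d}$, and its coefficient is an inner sum over $q-1$ dummy indices that matches verbatim the definition of $f_N \star_{q}^{q-1} f_N$, with $i_q$ playing the role of the fixed index $\gamma$.

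The only genuine difficulty, and where care is needed, is the bookkeeping of positions in case (ii): one must verify that the "split" of the middle $e_{i_q}$-block into a contracted half and a surviving half is consistent (requiring $h_q$ even, which is where the parity hypothesis enters) and that the resulting pairings within that block really give a single $\delta_{i_q,j_q}$ rather than $h_q/2$ independent constraints. Once this index accounting is carried out (and the pairings for the outer blocks are handled uniformly using $h_s = h_{d-s+1}$), both claims reduce to invoking the isometry $T$ and reading off the identified kernel.
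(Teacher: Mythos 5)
Your computation is correct: the block-alignment forced by the palindromic condition $h_i=h_{d-i+1}$ (and, in the odd case, the clean split of the central block into two halves of size $h_q/2$, each half of the inner product collapsing to the single constraint $\delta_{i_q,j_q}$) does identify $k_N\stackrel{\alpha}{\smallfrown}k_N$ with the image under the isometric tensor map $T$ of $f_N\stackrel{d/2}{\smallfrown}f_N$, respectively $f_N\star_{(d+1)/2}^{(d-1)/2}f_N$, after which linearity and the isometry give both norm identities. The paper omits this proof entirely (declaring it straightforward), and your argument is exactly the intended direct verification.
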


\section{Main results}
\subsection{Free Lindeberg principle}

From now on, we fix a $W^{\ast}$-probability space $(\mathcal{A},\varphi)$ (see \cite{Speicher, Voiculescu}). 

In this section, we are going to follow the approach initiated in \cite{Mossel}, leading to and state an invariance principle for vectors of Chebyshev sums in freely independent random variables.

The result we are presenting is based on the free version of the celebrated Lindeberg method and it can be seen as a generalization of the invariance principle for homogeneous sums of free random variables, proved in \cite{NourdinDeya} (see Theorem \ref{teoNourdin} in the sequel). In such a paper, the authors extended to the free setting a particular case of the invariance principle for multilinear homogeneous sums with low influences established in \cite{Mossel}.

As in the previously quoted references, of particular interest for us is the notion of ``influence'': influence functions play a prominent role in \cite{Mossel}, where the authors extend the Lindeberg method to a non-linear setting, in order to settle a number of conjectures involving the combinatorial analysis of Boolean functions. Low-influence functions were then applied in \cite{NourdinPeccati4} to obtain universality results in classical setting. See also \cite{Rotar} for some earlier related results.

Let \mbox{$f_N:[N]^{d}\rightarrow \mathbb{R}$} be mirror symmetric, vanishing on diagonals and having unit variance. For every $i=1,\dots,N$, the $i$-th \textit{influence function} associated with $f_N$ is defined as:
\begin{equation}
\label{Influence}
\mathrm{Inf}_{i}(f_N) = \sum_{l=1}^{d}\sum_{j_1,\dots,j_{d-1}=1}^{N}f_N(j_{1},\dots,j_{l-1},i,j_{l},\dots,j_{d-1})^{2}  \;.
\end{equation}
Note that, if $f_N$ is symmetric, the influence function has the expression 
$$\mathrm{Inf}_{i}(f_N) = d\sum_{j_1,\dots,j_{d-1}=1}^{N}f_N(i,j_1,\dots,j_{d-1})^{2}.$$
 Moreover, if $\| f_N\|^2 =1$, then $\sum\limits_{i=1}^{N}\mathrm{Inf}_i(f_N) = d$.
\begin{oss}
In the framework of classical probability, the definition of influence function is slightly different (see \cite[Chapter 11]{Peccatilibro}); moreover, in this case it is possible to give a nice probabilistic interpretation of $\mathrm{Inf}_i(f_N)$ as the measure of the influence that the variable $X_i$ has on the overall fluctuations of the statistic $Q_N(X_1,\dots,X_N)$, as suggested by the formula:
\begin{equation*}
(d!)^2\mathrm{Inf}_i(f_N) = \mathbb{E}[Var(Q_N(X_1,\dots,X_N))| X_j, j \neq i],
\end{equation*}
where $(X_1,\dots,X_N)$ is a vector of centered independent random variables having unit variance, and where we have used the notation:
$$ \mathbb{E}[Var(Q_N(X_1,\dots,X_N))| X_j, j \neq i] = \mathbb{E}[Q_N(X_1,\dots,X_N)- \mathbb{E}[\big(Q_N(X_1,\dots,X_N)| Y_k,k \neq i]\big)^2|Y_k, k \neq i].  $$ 
\end{oss}

The following result is the starting point of our analysis.
\begin{teo}[See \cite{NourdinDeya}]
\label{teoNourdin}
Let $(\mathcal{A},\varphi)$ be a $W^{\ast}$-probability space. Let $\mathbf{X}=\{X_i\}_{i}$ and $\mathbf{Y}= \{Y_i\}_i$ be two sequences of centered freely independent random variables, with unit variance, such that $\mathbf{X}$ and $\mathbf{Y}$ are freely independent. Suppose moreover that the $\{X_i\}_i$ (respectively $\{Y_i\}$) are either identically distributed or uniformly bounded, that is:
$$\sup_{i\geq 1}\varphi(|X_i|^{r}) < \infty \quad (\text{ resp. } \sup_{i\geq 1}\varphi(|Y_i|^{r}) < \infty).$$
If $Q_N$ denotes the homogeneous sum as in (\ref{QN}), with mirror symmetric kernel, then:
\begin{equation}
\label{InvarianceNourdin}
\varphi\big(Q_N(X_1,\dots,X_N)^{m}\big) - \varphi\big(Q_N(Y_1,\dots,Y_N)^{m}\big) = \mathcal{O}\big((\tau_N)^{\frac{1}{2}}\big)
\end{equation}
for any integer $m\geq 1$, where $\tau_N = \max\limits_{i=1,\dots,N}\mathrm{Inf}_{i}(f_N)$.
\end{teo}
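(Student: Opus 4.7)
The strategy is the free version of the Lindeberg exchange method: we swap the $X_j$'s for $Y_j$'s one index at a time, and control each swap by $\mathrm{Inf}_i(f_N)$. Define the hybrid vectors $\bs{Z}^{(i)}=(Y_1,\dots,Y_i,X_{i+1},\dots,X_N)$, so that $\bs{Z}^{(0)}=\bs{X}$ and $\bs{Z}^{(N)}=\bs{Y}$, and write the telescoping identity
\begin{equation*}
\varphi\bigl(Q_N(\bs{X})^m\bigr)-\varphi\bigl(Q_N(\bs{Y})^m\bigr)=\sum_{i=1}^N\Bigl[\varphi\bigl(Q_N(\bs{Z}^{(i-1)})^m\bigr)-\varphi\bigl(Q_N(\bs{Z}^{(i)})^m\bigr)\Bigr].
\end{equation*}
Since $f_N$ vanishes on diagonals, each monomial of $Q_N$ involves the variable $x_i$ at most once, so we may split
\begin{equation*}
Q_N(\bs{Z}^{(i-1)})=P_i+R_i(X_i),\qquad Q_N(\bs{Z}^{(i)})=P_i+R_i(Y_i),
\end{equation*}
where $P_i$ contains no $X_i$ (nor $Y_i$), and $R_i(\cdot)$ is linear in its argument with coefficients in the unital subalgebra $\mathcal{B}_i$ generated by $\{X_j,Y_j:j\neq i\}$; by the free independence hypothesis, $\mathcal{B}_i$ is freely independent of both $X_i$ and $Y_i$.

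Next, expand $(P_i+R_i(X_i))^m$ as a sum over binary words $\epsilon\in\{0,1\}^m$ of traces of monomials of length $m$ in the letters $P_i$ and $R_i(X_i)$, and do the same with $R_i(Y_i)$ in place of $R_i(X_i)$. For fixed $\epsilon$, denote by $k$ the number of $R_i$-letters. Iterating the defining relation of free independence (any centered alternating word has zero trace) together with traciality, one checks that the words with $k=0,1,2$ produce identical contributions in the $X_i$ and $Y_i$ expansions: the $k=0$ words do not depend on $X_i,Y_i$; the $k=1$ words vanish because $\varphi(X_i)=\varphi(Y_i)=0$; and the $k=2$ words depend only on $r_2(X_i)=r_2(Y_i)=1$ (a short computation reduces any such trace to the shape $\varphi(s_2)\varphi(s_3s_1)$ for elements $s_j\in\mathcal{B}_i$). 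Consequently only words with $k\geq 3$ can contribute to the $i$-th difference.

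For such words we apply the non-commutative H\"older inequality $|\varphi(T_1\cdots T_m)|\leq\prod_{j=1}^m\|T_j\|_{L^m}$ together with the free Kahane--Khintchine / Haagerup hypercontractivity estimate exploited in \cite{NourdinDeya}: under the identically distributed or uniformly bounded moment hypothesis, the $L^m$-norm of any $d$-homogeneous sum in the $\{X_j,Y_j\}_j$ is controlled by its $L^2$-norm up to a constant $C_{m,d}$. In particular $\|P_i\|_{L^m}\leq C_{m,d}\|f_N\|=C_{m,d}$ and $\|R_i(X_i)\|_{L^m}\leq C_{m,d}\,\mathrm{Inf}_i(f_N)^{1/2}$, the latter following from the bound $\varphi(R_i(X_i)^2)\leq d\cdot\mathrm{Inf}_i(f_N)$ obtained by a direct expansion using free independence and $\varphi(X_i^2)=1$. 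A word with $k\geq 3$ factors of $R_i$ is therefore controlled by $C'_{m,d}\,\mathrm{Inf}_i(f_N)^{k/2}\leq C''_{m,d}\,\mathrm{Inf}_i(f_N)^{3/2}$, using $\mathrm{Inf}_i(f_N)\leq d$. Summing over $i$ and using $\sum_{i=1}^N\mathrm{Inf}_i(f_N)=d$ yields
\begin{equation*}
\bigl|\varphi(Q_N(\bs{X})^m)-\varphi(Q_N(\bs{Y})^m)\bigr|\leq C''_{m,d}\sum_{i=1}^N\mathrm{Inf}_i(f_N)^{3/2}\leq C''_{m,d}\,\tau_N^{1/2}\sum_{i=1}^N\mathrm{Inf}_i(f_N)=dC''_{m,d}\,\tau_N^{1/2},
\end{equation*}
which is the desired $\mathcal{O}(\tau_N^{1/2})$ estimate.

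\textbf{Main obstacle.} The delicate ingredient is the uniform $L^m$-control of homogeneous sums in the \emph{hybrid} freely independent families $\{Y_1,\dots,Y_i,X_{i+1},\dots,X_N\}$: one has to make sure that the free hypercontractivity bound applies with a constant independent of $i$ and $N$. This is not automatic, but it follows from the fact that the mixed collection remains freely independent with a common moment bound (either the shared distribution, or the assumed uniform bound $\sup_i\varphi(|X_i|^r)<\infty$, $\sup_i\varphi(|Y_i|^r)<\infty$), and the Haagerup/Biane inequality depends only on these two pieces of data. The combinatorial cancellation for $k=1,2$ is routine once organised via non-crossing pair partitions, but must be carried out cleanly because freeness mixes the two $X_i$'s with the surrounding elements of $\mathcal{B}_i$ in a way that has no classical analogue.
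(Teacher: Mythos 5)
Your proposal is correct and follows essentially the same route as the paper: note first that the paper does not actually prove Theorem \ref{teoNourdin} (it is imported from \cite{NourdinDeya}), but its proof of the generalization, Theorem \ref{Multiinvariance2}, uses exactly your scheme --- Lindeberg exchange along hybrid vectors, the splitting $Q_N(\bs{Z}^{(i)})=W^{(i)}+V^{(i)}$, the free binomial expansion, cancellation of the words with at most two $V$-letters via freeness and matching first and second moments (Lemma \ref{3.1bis}), and the bound $\sum_i \mathrm{Inf}_i(f_N)^{3/2}\leq \tau_N^{1/2}\sum_i\mathrm{Inf}_i(f_N)=d\,\tau_N^{1/2}$ for the remaining words. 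The only real divergence is in how the words with $k\geq 3$ are estimated: you invoke the tracial non-commutative H\"older inequality $|\varphi(T_1\cdots T_m)|\leq\prod_j\|T_j\|_{L^m}$ together with an $L^m$-versus-$L^2$ comparison for homogeneous sums, whereas the paper reaches the same endpoint through an iterated Cauchy--Schwarz argument (Lemma \ref{algo}) combined with Kargin's H\"older-type inequality (Lemma \ref{lemma0}) and the moment bounds $\varphi\big((W^{(i)})^{2r}\big)\leq C_{r,d}\,\mu$ and $\varphi\big(V^{(i)}(A)^{2r}\big)\leq D_{r,d}\,\mu\,\mathrm{Inf}_i(f_N)^{r}$ of Proposition \ref{prop02} and Lemma \ref{lemma01}; these are the precise substitutes for the hypercontractivity estimate you cite, and your ``main obstacle'' (uniformity of the constant over the hybrid families) is resolved in the paper exactly as you suggest, since those bounds depend only on $d$, $r$ and the largest even moments of the mixed sequence. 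Your remark that $\mathrm{Inf}_i(f_N)\le d$ suffices to pass from $\mathrm{Inf}_i^{k/2}$ to $\mathrm{Inf}_i^{3/2}$ at the cost of a constant is fine; the paper instead normalizes $\mathrm{Inf}_i(f_N)\le 1$ without loss of generality.
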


Let $d\geq 2$ be an integer. For fixed integers $h_1,\dots,h_d \geq 1$, with $h_j = h_{d-j+1}$ for $j=1,\dots,\lfloor \frac{d}{2}\rfloor$, let $\bs{X}=\{X_i\}_i$ be a sequence of freely independent random variables on $(\mathcal{A},\varphi)$ such that $U_{h_j}(X_i)$ is a centered random variable with unit variance, for every $i$ and every $j=1,\dots,d$. We will consider sequences of homogeneous sums $Q_N$, whose argument is given by the vector $\bs{\mathcal{X}}^{(N)} = \big(\bs{\mathcal{X}}_1,\dots, \bs{\mathcal{X}}_N\big)$, with $\bs{\mathcal{X}}_i = (\mathcal{X}_{i,1},\dots, \mathcal{X}_{i,d})$ and $\mathcal{X}_{i,j} = U_{h_j}(X_i)$, namely:
\begin{equation}
\label{ensemble}
\bs{\mathcal{X}}_i = \big(U_{h_1}(X_i),\dots, U_{h_d}(X_i)\big).
\end{equation}

We write:
\begin{align*}
Q_N\big(\bs{\mathcal{X}}^{(N)}\big) &= \sum_{i_1,\dots,i_d=1}^{N}f_N(i_1,\dots,i_d)\mathcal{X}_{i_1,1}\mathcal{X}_{i_2,2}\cdots \mathcal{X}_{i_d,d} \\
&=  \sum_{i_1,\dots,i_d=1}^{N}f_N(i_1,\dots,i_d)U_{h_1}(X_{i_1})\cdots U_{h_d}(X_{i_d})\\
&= Q_N^{(\bs{h})}(f_N;X_1,\dots,X_N)
\end{align*}
(namely the $j$-th element in each summand is the $j$-th element in $\bs{\mathcal{X}}_{i_j}$).

\begin{oss} This further notation for Chebyshev sums facilitates the connection between our approach and the findings in \cite{Mossel}, where the authors deals with homogeneous sums in sequences of ensembles. It also simplifies the notation used in the proofs.
\end{oss}

\begin{ex}
It is obvious that a standard semicircular random variable $\mathcal{S}$ satisfies the assumptions $\varphi(U_n(\mathcal{S})^2) = 1$ and $\varphi(U_n(\mathcal{S}))=0$ for every integer $n\geq 1$. Anyway, there exist some other non trivial examples: for instance, let $d=2$ and choose $h_1 =h_2 =2$. For a bounded random variable $X$, the constraints $\varphi(U_2(X))= 0$ and $\varphi(U_2(X)^2)=1$ give $\varphi(X^2)=1$ and $\varphi(X^4) = 2$, so $X$ can be any centered random variable with unit variance and zero free fourth cumulant $r_4$ (say, a free mesokurtic variable). 
For instance, we can choose a centered free Poisson variable $Z(1)$ with parameter one, and a free symmetric Bernoulli variable freely independent of $Z(1)$, say $Y \sim \frac{1}{2}(\delta_1 + \delta_{-1})$, so that $r_2(Y)=1$ and $r_4(Y)= -1$ (see \cite{Speicher}). Therefore $X= \frac{1}{\sqrt{2}}(Z(1) + Y)$ is a centered random variable satisfying the desired hypotheses.
\end{ex}

In order to properly develop our version of the Lindeberg method (stated in the next theorem), we need to introduce some auxiliary sequences of vectors. To this aim, if $\{Y_i\}_i$ is a sequence of freely independent centered random variables with unit variance, freely independent of $\{X_i\}_i$, for every $i=1,\dots, N$ set:
\begin{equation}
\label{auxiliary}
 \bs{Z}^{N,(i)} = \bs{Z}^{(i)} = (\bs{Z}_{1}^{(i)},\dots, \bs{Z}_{N}^{(i)}) = (\bs{Y_1},\dots,\bs{Y_{i-1}}, \bs{\mathcal{X}}_{i},\dots, \bs{\mathcal{X}}_{N}),
 \end{equation}
where $\bs{Y_j}$ is the vector consisting of $d$ copies of $Y_j$. In particular $\bs{Z}^{(1)} = \bs{\mathcal{X}}^{(N)}$ and $\bs{Z}^{(N)} = (\bs{Y}_1,\dots,\bs{Y}_N)$.

For the reader's convenience, we shall restate in the appendix some useful results from \cite{NourdinDeya} and \cite{Kargin}, to which we will often refer to (for instance, the rule for the binomial expansion for free random variables).
\newline
Let us fix some further notation. If $n\geq 2$, for any integer $N$ and $j=1,\dots,n$, consider a kernel $f_N^{(j)}:[N]^{d}\rightarrow \mathbb{R}$ that is mirror symmetric, vanishing on diagonals and with unit variance, as well as the homogeneous polynomial in the non-commuting variables $x_1,\dots,x_N$:
\begin{equation}
\label{QNj}
Q_N^{(j)}(x_1,\dots, x_N) = \sum_{i_1,\dots,i_{d}=1}^{N}f_N^{(j)}(i_1,\dots,i_{d})x_{i_1}\cdots x_{i_{d}}.
\end{equation}
The invariance principle we are interested in concerns vectors of the type $\big(Q_N^{(1)}(\bs{\mathcal{X}}^{(N)}),\dots, Q_N^{(n)}(\bs{\mathcal{X}}^{(N)})\big)$, where \mbox{$\mathbf{X} = \{X_i\}_i$} is a sequence of freely independent centered random variables, with unit variance, belonging to the fixed $W^{\ast}$-probability space $(\mathcal{A},\varphi)$, and $\bs{\mathcal{X}}^{(N)}$ is defined as in (\ref{ensemble}).

The asymptotic behaviour of such a vector will be controlled by means of the influence functions (as defined in (\ref{Influence})). In particular, we will extensively use the quantities $\tau_N^{(j)} = \max\limits_{i=1,\dots,N}\mathrm{Inf}_{i}(f_N^{(j)})$, for $j=1,\dots,n$.

Note that the our multidimensional invariance principle will be stated for Chebyshev sums with mirror symmetric kernels, but to derive from it the universal laws we will have to deal only with fully symmetric kernels.

Keeping the above notation, the forthcoming Theorem \ref{Multiinvariance2} states an invariance principle for vectors of homogeneous sums with low influence, whose proof is given in detail in the last section. 
Note that the bound we provide is of the same nature as the ones given in \cite[Theorem 4.1]{NourdinPeccati4}.

\begin{teo}
\label{Multiinvariance2}
If $d\geq 1$, let $h_1,\dots,h_d$ be positive integers with $h_i = h_{d-i+1}$ for $i=1,\dots,\lfloor \frac{d}{2}\rfloor$ (if $d \geq 2$).
Let $\bs{X} = \{X_i\}_i$ be a sequence of freely independent random variables such that $U_{h_j}(X_i)$ is centered and has unit variance. Consider the vector of Chebyshev sums  $(Q_N^{(1)}(\bs{\mathcal{X}}^{(N)}),\dots,Q_N^{(n)}(\bs{\mathcal{X}}^{(N)}))$ with $Q_N^{(j)}$ of degree $d$ and with mirror symmetric kernels $f_N^{(j)}: [N]^{d}\rightarrow \mathbb{R}$, vanishing on diagonals and having unit variance. Let $\bs{Y}=\{Y_j\}_j$ be a sequence of freely independent centered random variables, with unit variance and freely independent of $\bs{X}$. Assume further that $\bs{X}$ and $\bs{Y}$ are both sequences of identically distributed elements or elements with uniformly bounded moments. Then, for every integer $k\geq 1$ and for every choice of nonnegative integers $m_{1,s},\dots,m_{n,s}$, for $s=1,\dots,k$:
\begin{align}
\varphi\bigg(\prod_{s=1}^{k}\big(Q_N^{(1)}(\bs{\mathcal{X}}^{(N)})\big)^{m_{1,s}}\cdots \big(Q_N^{(n)}(\bs{\mathcal{X}}^{(N)})\big)^{m_{n,s}}\bigg) &-\varphi\bigg(\prod_{s=1}^{k}\big(Q_N^{(1)}(\bs{Y})\big)^{m_{1,s}}\cdots \big(Q_N^{(n)}(\bs{Y})\big)^{m_{n,s}}\bigg) = \\
&= \mathcal{O}\big(\max\limits_{j=1,\dots,n}(\tau_{N}^{(j)})^{\frac{1}{2}}\big)
\end{align}

\end{teo}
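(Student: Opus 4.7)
The overall strategy is a Lindeberg-style swap applied to the telescoping sequence $\bs{Z}^{(1)},\dots,\bs{Z}^{(N+1)}$ introduced in \eqref{auxiliary}. Set
\[
\Phi(W_1,\dots,W_N) = \varphi\!\left(\prod_{s=1}^{k}\big(Q_N^{(1)}(W)\big)^{m_{1,s}}\cdots \big(Q_N^{(n)}(W)\big)^{m_{n,s}}\right),
\]
so that the quantity we want to bound is $|\Phi(\bs{Z}^{(1)})-\Phi(\bs{Z}^{(N+1)})|\le \sum_{i=1}^N |\Phi(\bs{Z}^{(i)})-\Phi(\bs{Z}^{(i+1)})|$. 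The plan is to show that each individual increment satisfies
$|\Phi(\bs{Z}^{(i)})-\Phi(\bs{Z}^{(i+1)})| = \mathcal{O}\big(\sum_{j=1}^{n}\mathrm{Inf}_i(f_N^{(j)})^{3/2}\big)$, and then to sum using Cauchy--Schwarz: $\sum_i \mathrm{Inf}_i(f_N^{(j)})^{3/2}\le (\tau_N^{(j)})^{1/2}\sum_i \mathrm{Inf}_i(f_N^{(j)}) = d\,(\tau_N^{(j)})^{1/2}$.

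To estimate a single increment, fix $i$ and notice that the vectors $\bs{Z}^{(i)}$ and $\bs{Z}^{(i+1)}$ differ only in the $i$-th slot (by $\bs{\mathcal{X}}_i$ versus $\bs{Y}_i$). Using the fact that each $f_N^{(j)}$ is mirror symmetric and vanishes on diagonals, each Chebyshev sum can be decomposed (for $W=\bs{Z}^{(i)}$) as
\[
Q_N^{(j)}(W) = R_j + \sum_{l=1}^{d} A_{j,l}\, U_{h_l}(X_i)\, B_{j,l},
\]
where $R_j,\,A_{j,l},\,B_{j,l}$ belong to the subalgebra generated by $\{W_k:k\neq i\}$ and hence are freely independent of $X_i$; the analogous decomposition for $W'=\bs{Z}^{(i+1)}$ has exactly the same $R_j, A_{j,l}, B_{j,l}$ but with $U_{h_l}(X_i)$ replaced by $U_{h_l}(Y_i)$. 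Substituting these into $\Phi$ and expanding the $m_{j,s}$-th powers by the non-commutative binomial rule (Kargin's expansion recalled in the Appendix) produces a finite sum of words whose letters are either one of the $R_j, A_{j,l}, B_{j,l}$ or a Chebyshev value $U_{h_l}(X_i)$ (respectively $U_{h_l}(Y_i)$).

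For every such word, the trace $\varphi$ of it is computed via free independence of $X_i$ from the rest: a word in which no $U_{h_l}(X_i)$ appears is identical on both sides and cancels; a word with total multiplicity $1$ in the letters $\{U_{h_l}(X_i)\}_l$ vanishes in both traces, since $\varphi(U_{h_l}(X_i))=\varphi(U_{h_l}(Y_i))=0$; and a word with total multiplicity $2$ all concentrated on a single letter $U_{h_l}$ yields, by freeness, the surrounding moment times $\varphi(U_{h_l}(X_i)^2)=\varphi(U_{h_l}(Y_i)^2)=1$, so it again cancels. Consequently, $\Phi(\bs{Z}^{(i)})-\Phi(\bs{Z}^{(i+1)})$ is a finite sum of words in which either $X_i$-letters appear with total multiplicity $\geq 3$, or two Chebyshev factors of \emph{different} orders $U_{h_l}$, $U_{h_{l'}}$ both appear and the mixed free moment is nonzero.

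For each surviving word, Cauchy--Schwarz (property (iii) of Section~2.1) together with the uniformly bounded moments assumption on $\{X_i\}$ and $\{Y_j\}$ allows one to factor out the Chebyshev letters and reduce the estimate to bounding the $L^2$-norm of the surrounding products of $R_j,A_{j,l},B_{j,l}$. Each such norm is itself a sum of squares of entries of $f_N^{(j)}$ through a coordinate containing $i$, and therefore is dominated by $\mathrm{Inf}_i(f_N^{(j)})$; pairing up the two copies of $X_i$ that correspond to the quadratic part consumes one factor of $\mathrm{Inf}_i(f_N^{(j)})$, and the multiplicity-$\geq 3$ constraint contributes the extra $\mathrm{Inf}_i(f_N^{(j)})^{1/2}$. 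The anticipated main obstacle is precisely this bookkeeping: tracking, in the non-commutative multinomial expansion of a product of $\sum_s (m_{1,s}+\dots+m_{n,s})$ factors, which configurations of Chebyshev multiplicities cancel between $W$ and $W'$ and which survive, and verifying that every survivor carries a combinatorial factor of $\mathrm{Inf}_i(f_N^{(j)})^{3/2}$ for some $j$. Once this is in place, summing the bound over $i$ as described in the first paragraph gives the claimed $\mathcal{O}(\max_j (\tau_N^{(j)})^{1/2})$ rate; the constant depends on $d, n, k, \{m_{j,s}\}$ and on the uniform moment bound, but not on $N$.
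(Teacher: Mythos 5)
Your overall architecture (telescoping over the hybrid vectors $\bs{Z}^{(i)}$, splitting each $Q_N^{(j)}$ into a part free of the $i$-th variable plus a part linear in the letters $U_{h_l}(X_i)$, expanding by the free binomial rule, cancelling the degree-$0$ terms, killing the degree-$1$ terms by centering, and extracting $\mathrm{Inf}_i(f_N^{(j)})^{3/2}$ from the rest) is exactly the paper's strategy. However, there is a genuine gap in your treatment of the words where the $i$-th variable appears with total multiplicity exactly $2$. You correctly observe that when both occurrences carry the same order $h_l$ the term cancels against its $\bs{Y}_i$-counterpart via $\varphi(U_{h_l}(X_i)^2)=\varphi(Y_i^2)=1$, but you then classify the mixed case (two factors $U_{h_l}(X_i)$, $U_{h_{l'}}(X_i)$ of different orders with nonzero mixed moment) as a \emph{surviving} word to be bounded. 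This cannot work: such a word has Chebyshev multiplicity $2$, so your own mechanism ("the multiplicity-$\geq 3$ constraint contributes the extra $\mathrm{Inf}_i^{1/2}$") only yields a bound of order $\mathrm{Inf}_i(f_N^{(j)})$, and $\sum_i \mathrm{Inf}_i(f_N^{(j)})=d$ is $\mathcal{O}(1)$, not $\mathcal{O}((\tau_N^{(j)})^{1/2})$. The theorem would in fact be false if these terms genuinely survived.

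The paper closes this case by showing that \emph{every} multiplicity-$2$ term cancels, and this is precisely where the hypothesis $h_l=h_{d-l+1}$ enters (a point your proposal never uses). Concretely: if a centered singleton letter sits strictly between the two occurrences, item 2 of Lemma \ref{3.1bis} equates the $\bs{\mathcal{X}}_i$- and $\bs{Y}_i$-traces regardless of the two orders; if instead the two occurrences become adjacent after the intermediate letters are traced out, the pairing forces the first occurrence to sit at position $l$ of one $V$-block and the second at position $d-l+1$ of another, so the two orders are $h_l$ and $h_{d-l+1}=h_l$ and item 3 of Lemma \ref{3.1bis} applies (this is exactly the computation in the paper's worked example, where $U_{h_3}(X_i)U_{h_1}(X_i)=U_{h_1}(X_i)^2$ because $h_1=h_3$). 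You need to supply this argument; without it the $\gamma=2$ terms are not controlled. A secondary, lesser issue is that for the $\gamma\geq 3$ terms the step "Cauchy--Schwarz allows one to factor out the Chebyshev letters" hides real work: the paper needs a dedicated iterated Cauchy--Schwarz inequality (Lemma \ref{algo}) whose exponents are tuned so that, after combining with Kargin's free H\"older inequality and the moment bounds on the $W$- and $V$-blocks, the total power of $\mathrm{Inf}_i$ collected is exactly $\gamma/2\geq 3/2$; a generic application of Cauchy--Schwarz does not automatically preserve this power count.
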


\begin{oss}
The complete proof of Theorem \ref{Multiinvariance2} is given in Section 4.2. Here we wish to give some intuition about its structure. As anticipated, the key of our approach consists in considering the vectors
 $\bs{Z}^{(i)} = (\bs{Y}_1,\dots,\bs{Y}_{i-1},\bs{\mathcal{X}}_i,\dots,\bs{\mathcal{X}_N})$, with $\bs{Y}_i = \underbrace{(Y_i,\dots,Y_i)}_{d \text{ times}}$ and $\bs{\mathcal{X}}_i= (U_{h_1}(X_i),\dots,U_{h_d}(X_i))$ \footnote{We drop the dependence on $N$ to simplify the notation.}. As a consequence, one can write:
\begin{align}
\label{differenceBIS}
&\varphi\bigg(\prod_{s=1}^{k}\big(Q_N^{(1)}(\bs{\mathcal{X}}^{(N)})\big)^{m_{1,s}}\cdots \big(Q_N^{(n)}(\bs{\mathcal{X}}^{(N)})\big)^{m_{n,s}}\bigg) -\varphi\bigg(\prod_{s=1}^{k}\big(Q_N^{(1)}(\bs{Y})\big)^{m_{1,s}}\cdots \big(Q_N^{(n)}(\bs{Y})\big)^{m_{n,s}}\bigg) \\ 
&= \sum_{i=1}^{N}
\varphi\bigg(\prod_{s=1}^{k}\big(Q_N^{(1)}(\bs{Z}^{(i)})\big)^{m_{1,s}}\cdots \big(Q_N^{(n)}(\bs{Z}^{(i)})\big)^{m_{n,s}}\bigg) -\varphi\bigg(\prod_{s=1}^{k}\big(Q_N^{(1)}(\bs{Z}^{(i+1)})\big)^{m_{1,s}}\cdots \big(Q_N^{(n)}(\bs{Z}^{(i+1)})\big)^{m_{n,s}}\bigg)\nonumber  \\ 
&= \sum_{i=1}^{N}
\varphi\bigg(\prod_{s=1}^{k}\big(W_1^{(i)}+ V_1^{(i)}(\bs{\mathcal{X}}_i)\big)^{m_{1,s}}\cdots \big(W_n^{(i)}+ V_n^{(i)}(\bs{\mathcal{X}}_i)\big)^{m_{n,s}}\bigg) \nonumber\\ 
&-\varphi\bigg(\prod_{s=1}^{k}\big(W_1^{(i)}+ V_1^{(i)}(\bs{Y}_i))\big)^{m_{1,s}}\cdots \big(W_n^{(i)}+ V_n^{(i)}(\bs{Y}_i)\big)^{m_{n,s}}\bigg),\nonumber
\end{align}
where, for every $j=1,\dots,n$, we set $Q_N^{(j)}(\bs{Z}^{(i)}) = W_j^{(i)} + V_j^{(i)}(\bs{\mathcal{X}}_i)$, with $W_j^{(i)}$, $V_j^{(i)}(\bs{\mathcal{X}}_i)$ self-adjoint operators defined by:
\begin{equation}
\label{WN}
W_j^{(i)} = \sum_{i_1,\dots,i_d \in [N]\setminus \{i\}}f_N^{(j)}(i_1,\dots,i_d)Z_{i_1,1}^{(i)}\cdots Z_{i_d,d}^{(i)}
\end{equation}
(that is, $W_j^{(i)}$ is obtained by gathering together the summands where no $U_{h_p}(X_i)$ appears), and
\begin{equation}
\label{VN}
V_j^{(i)}(\bs{\mathcal{X}}_i) = \sum_{l=1}^{d}\sum_{\substack{i_1,\dots,i_{d-1}\in\\ [N]\setminus \{i\}}}f_N^{(j)}(i_1,\dots,i_{l-1},i,i_{l},\dots,i_{d-1})Z_{i_1,1}^{(i)}\cdots Z_{i_{l-1},l-1}^{(i)} U_{h_l}(X_i) Z_{i_{l}, l+1}^{(i)}\cdots Z_{i_{d-1},d}^{(i)}.
\end{equation}
Similarly, we set:
\begin{equation}
V_j^{(i)}(\bs{Y}_i) = \sum_{l=1}^{d}\sum_{\substack{i_1,\dots,i_{d-1}\\ \in [N]\setminus \{i\}}}f_N^{(j)}(i_1,\dots,i_{l-1},i,i_{l},\dots,i_{d-1})Z_{i_1,1}^{(i)}\cdots Z_{i_{l-1},l-1}^{(i)} Y_i Z_{i_{l}, l+1}^{(i)}\cdots Z_{i_{d-1},d}^{(i)}.
\end{equation}

The conclusion is then obtained by showing that either the summands in (\ref{differenceBIS}) cancel out, either they are zero, either they are of the order of $\max\limits_{j=1,\dots,n}(\tau_N^{(j)})^{\frac{1}{2}}$.

In the next example, we will show how one  can control the expression (\ref{differenceBIS}) for a precise choice of parameters.
\end{oss}

\begin{ex}

Consider $d=3, k=1, n=2, m_{1,1}=2, m_{2,1}=1$. Then, for every fixed $i=1,\dots,N$, in the expansion for
$$ \varphi\big( (W_1^{(i)} + V_{1}^{(i)}(\bs{\mathcal{X}}^{(N)}))^2  (W_2^{(i)} + V_{2}^{(i)}(\bs{\mathcal{X}}^{(N)}))\big) $$ we will have the sum of the following 8 items:
\begin{enumerate}
\item $\varphi\big((W_1^{(i)})^2 W_2^{(i)}\big)$, that will be canceled out in the difference (\ref{differenceBIS}) with the same expectation coming from $\varphi\big( (W_1^{(i)} + V_{1}^{(i)}(\bs{Y}))^2  (W_2^{(i)} + V_{2}^{(i)}(\bs{Y}))\big)$;
\item $\varphi \big((W_1^{(i)})^2 V_2^{(i)}(\bs{\mathcal{X}}^{(N)})\big)$;
\item $\varphi \big(W_1^{(i)} V_1^{(i)}(\bs{\mathcal{X}}^{(N)}) W_2^{(i)}\big) $;
\item $\varphi \big(W_1^{(i)} V_1^{(i)}(\bs{\mathcal{X}}^{(N)}) V_2^{(i)}(\bs{\mathcal{X}}^{(N)})\big) $;
\item $\varphi \big( V_1^{(i)}(\bs{\mathcal{X}}^{(N)}) W_1^{(i)} W_2^{(i)}\big) $;
\item $\varphi \big( V_1^{(i)}(\bs{\mathcal{X}}^{(N)}) W_1^{(i)}  V_2^{(i)}(\bs{\mathcal{X}}^{(N)})\big) $;
\item $\varphi \big( V_1^{(i)}(\bs{\mathcal{X}}^{(N)})^2 W_2^{(i)}  \big) $;
\item $ \varphi \big( V_1^{(i)}(\bs{\mathcal{X}}^{(N)})^2 V_2^{(i)}(\bs{\mathcal{X}}^{(N)}) \big).$
\end{enumerate}
It is easily seen by calculation that the items $2,3$, and $5$ are always zero.
The items 4,6, and 7, are sums of terms that are either zero or cancel with the corresponding terms in 
$\varphi\big( (W_1^{(i)} + V_{1}^{(i)}(\bs{\mathcal{X}}^{(N)}))^2  (W_2^{(i)} + V_{2}^{(i)}(\bs{\mathcal{X}}^{(N)}))\big)$. For instance, if we consider the fourth item, we will have (among other summands that equal zero):
$$ \sum_{\substack{i_1,i_2,i_3 \neq i \\ k_1,k_2 \neq i, l_1,l_2 \neq i} }f_N^{(1)}(i_1,i_2,i_3)f_N^{(1)}(k_1,k_2,i)f_N^{(2)}(i,l_1,l_2)\varphi\big( Z_{i_1}Z_{i_2}Z_{i_3} Z_{k_1}Z_{k_2}U_{h_3}(X_i)U_{h_1}(X_i)Z_{l_1}Z_{l_2}   \big),$$
which becomes (remember that $h_1=h_3$):
\begin{align*}
 \sum_{\substack{i_1,i_2,i_3 \neq i \\ k_1,k_2 \neq i, l_1,l_2 \neq i}}&f_N^{(1)}(i_1,i_2,i_3)f_N^{(1)}(k_1,k_2,i)f_N^{(2)}(i,l_1,l_2)\varphi\big( Z_{i_1}Z_{i_2}Z_{i_3} Z_{k_1}Z_{k_2}U_{h_1}(X_i)^2 Z_{l_1}Z_{l_2}   \big) \\
&= \sum_{i_1,i_2,i_3 \neq i}f_N^{(1)}(i_1,i_2,i_3)f_N^{(1)}(i_3,i_2,i)f_N^{(2)}(i,i_2,i_1)\varphi\big( Z_{i_2}^3 \big). \numberthis \label{es1}
\end{align*}
On the other hand, the same computations yield 
\begin{align*}
 \sum_{\substack{i_1,i_2,i_3 \neq i \\ k_1,k_2 \neq i, l_1,l_2 \neq i}}& f_N^{(1)}(i_1,i_2,i_3)f_N^{(1)}(k_1,k_2,i)f_N^{(2)}(i,l_1,l_2)\varphi\big( Z_{i_1}Z_{i_2}Z_{i_3} Z_{k_1}Z_{k_2}Y_i^2 Z_{l_1}Z_{l_2}   \big)\\
 &= \sum_{i_1,i_2,i_3 \neq i}f_N^{(1)}(i_1,i_2,i_3)f_N^{(1)}(i_3,i_2,i)f_N^{(2)}(i,i_2,i_1)\varphi\big( Z_{i_2}^3 \big), \numberthis \label{es2}
\end{align*}
so that (\ref{es1}) and (\ref{es2}) cancel each other in (\ref{differenceBIS}). 
Note that by the traciality of the state $\varphi$, the computations required for the items 4,6, and 7, are similar, the only difference being in the occuring kernels.

The case to pay more attention to is the item 8. In this case, a priori, we cannot say anything about its value, because it may depend on the distribution of $U_{h_j}(X_i)$. 
Indeed, by linearity, traciality property of $\varphi$ and the rule of free independence, the only non trivial case to be considered is:
$$ \sum_{\substack{i_1,i_2 \neq i \\ l_1,l_2 \neq i \\ k_1,k_2 \neq i} } f_N^{(1)}(i_1,i,i_2)f_N^{(1)}(l_1,i,l_2)f_N^{(2)}(k_1,i,k_2)\varphi\big( Z_{i_1}U_{h_2}(X_i)Z_{i_2}Z_{l_1}U_{h_2}(X_i)Z_{l_2} Z_{k_1}U_{h_2}(X_i) Z_{k_2}\big)$$
when $i_2=l_1, l_2=k_2, k_2=i_1$. Indeed, in this case,
$$\varphi\big( Z_{i_1}U_{h_2}(X_i)Z_{i_2}Z_{l_1}U_{h_2}(X_i)Z_{l_2} Z_{k_1}U_{h_2}(X_i) Z_{k_2}\big) = \varphi\big(U_{h_2}(X_i)^3\big). $$
Similarly, replacing $\bs{\mathcal{X}}^{(N)}$ with $\bs{Y}$, we will have 
 $$\varphi\big( Z_{i_1}Y_i(X_i)Z_{i_2}Z_{l_1}Y_i Z_{l_2} Z_{k_1} Y_i Z_{k_2}\big) = \varphi\big(Y_i^3\big).$$

\end{ex}

\begin{ex}
Consider the case $n=d=2$, and the kernels:
\begin{enumerate}
\item 
$$ f_N^{(1)}(i,j) = \begin{cases}
\dfrac{1}{\sqrt{2N-2}} & \text{ if } i \neq j, i = 1 \vee j=1 \\
0 & \text{otherwise};
\end{cases}
$$
\item
$$ 
f_N^{(2)}(i,j)=
\begin{cases}
0 & \text{ if } i =j \\
\dfrac{1}{\sqrt{N(N-1)}} & \text{ if } i \neq j \; ;
\end{cases}
$$
\item 
$$ 
f_N^{(3)}(i,j) =
\begin{cases}
\dfrac{1}{\sqrt{(N-1)(N-2)}} & \text{ if } i\neq j \text{ and } i, j \neq 1 \\
0 & \text{ otherwise };
\end{cases}
$$
\end{enumerate}
Note that $\|f_N^{(j)}\|^2 = 1$ for all $j=1,2,3$. Simple computations yield that 
\begin{enumerate}
\item $\mathrm{Inf}_1(f_N^{(1)}) = 1$ and $\mathrm{Inf}_j(f_N^{(1)}) = \dfrac{1}{N-1} $ for $j=2,\dots,N$;
\item $\mathrm{Inf}_i(f_N^{(2)}) = \dfrac{2}{N}$ for every $i=1,\dots,N$,
\item $\mathrm{Inf}_1(f_N^{(3)}) = 0$, and $\mathrm{Inf}_j(f_N^{(3)}) = \dfrac{2}{N-1}$ for all $j=2,\dots,N$,
\end{enumerate}
which in turn imply that $\tau_N^{(1)} = 1$, $\tau_N^{(2)} = \dfrac{2}{N}$ and $\tau_N^{(3)} = \dfrac{2}{N-1}$. Therefore, for Chebyshev sums $Q_N^{(1)}$, $Q_N^{(2)}$, and $Q_N^{(3)}$ with kernels $f_N^{(1)},f_N^{(2)}, f_N^{(3)}$ respectively, for any $N \geq 1$, one has:
\begin{equation}
\varphi\bigg(\prod_{s=1}^{k}\big(Q_N^{(1)}(\bs{\mathcal{X}}^{(N)})\big)^{m_{1,s}} \big(Q_N^{(2)}(\bs{\mathcal{X}}^{(N)})\big)^{m_{2,s}}\bigg) -\varphi\bigg(\prod_{s=1}^{k}\big(Q_N^{(1)}(\bs{Y})\big)^{m_{1,s}}\big(Q_N^{(2)}(\bs{Y})\big)^{m_{2,s}}\bigg) 
= \mathcal{O}\big(1\big)
\end{equation}
and so we cannot deduce any universal behaviour, while
\begin{equation}
\varphi\bigg(\prod_{s=1}^{k}\big(Q_N^{(2)}(\bs{\mathcal{X}}^{(N)})\big)^{m_{1,s}} \big(Q_N^{(3)}(\bs{\mathcal{X}}^{(N)})\big)^{m_{2,s}}\bigg) -\varphi\bigg(\prod_{s=1}^{k}\big(Q_N^{(2)}(\bs{Y})\big)^{m_{1,s}}\big(Q_N^{(3)}(\bs{Y})\big)^{m_{2,s}}\bigg) 
= \mathcal{O}\big(\dfrac{1}{\sqrt{N-1}}\big).
\end{equation}
\end{ex}

\subsection{Convergence results}

The results of this subsection are not based on the Lindeberg principle. Indeed, the forthcoming Theorems \ref{ConvChebySemicircular} and \ref{ConvFreePoissonPARI} aim to state the Fourth moment Theorem for Chebyshev sums  in terms of the contraction operators, for semicircular and free Poisson limit respectively (see \cite[Theorems 1.3 and 1.6]{NourdinPeccatiSpeicher}  and  \cite{NourdinPeccati}). The following auxiliary lemma (whose proof requires only simple computations), is inspired by Proposition $4.1$ in \cite{PeccatiZeng} and will be useful in the sequel.
\begin{lemma}
\label{stimacontr} 
Let $d\geq 2$ and $f_N:[N]^{d}\rightarrow \mathbb{R}$. For every $q=1,\dots,d-1$, we have:
\begin{eqnarray}
\|f_N \stackrel{q}{\smallfrown} f_N \| &\geq & \|f_N \star_{q+1}^{q} f_N \|, \nonumber \\
\|f_N \stackrel{1}{\smallfrown} f_N\| &\geq &\|f_N \star_{1}^{0}f_N \|.\nonumber
\end{eqnarray}
\end{lemma}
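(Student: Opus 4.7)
The plan is to establish both inequalities by direct expansion, exploiting a single structural observation: the star contraction $f_N \star_{q+1}^{q} f_N$ is nothing but a ``diagonal restriction'' of the ordinary contraction $f_N \stackrel{q}{\smallfrown} f_N$.

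First I would verify, by plugging $t_{d-q} = s_1 = \gamma$ into the definition of $f_N \stackrel{q}{\smallfrown} f_N$, the pointwise identity
$$f_N \star_{q+1}^{q} f_N(t_1,\dots,t_{d-q-1},\gamma,s_1,\dots,s_{d-q-1}) = f_N \stackrel{q}{\smallfrown} f_N(t_1,\dots,t_{d-q-1},\gamma,\gamma,s_1,\dots,s_{d-q-1}).$$
Squaring and summing over $(t_1,\dots,t_{d-q-1},\gamma,s_1,\dots,s_{d-q-1})\in[N]^{2d-2q-1}$ then amounts to integrating a non-negative quantity over the diagonal slice $\{t_{d-q}=s_1\}$ of $[N]^{2d-2q}$. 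Extending the summation to the full cube can only increase the total, and the full sum equals $\|f_N \stackrel{q}{\smallfrown} f_N\|^2$. This yields the first inequality for every $q=1,\dots,d-1$, and notably no symmetry assumption is needed in this step.

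For the second inequality, I would expand both squared norms and collect terms according to the single contracting index. A short calculation gives
$$\|f_N \stackrel{1}{\smallfrown} f_N\|^{2} = \sum_{i,j=1}^{N} R_{ij}\, S_{ij} \qquad \text{and} \qquad \|f_N \star_{1}^{0} f_N\|^{2} = \sum_{\gamma=1}^{N} R_{\gamma\gamma}\, S_{\gamma\gamma},$$
where $R_{ij} := \sum_{\tau \in [N]^{d-1}} f_N(\tau,i) f_N(\tau,j)$ and $S_{ij} := \sum_{\sigma \in [N]^{d-1}} f_N(i,\sigma) f_N(j,\sigma)$ are both Gram matrices, hence symmetric and positive semidefinite. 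Invoking the standing mirror-symmetry hypothesis on $f_N$, the involution $(\tau_1,\dots,\tau_{d-1})\mapsto(\tau_{d-1},\dots,\tau_1)$ identifies the two multi-index summations and forces $R=S$. Therefore
$$\|f_N \stackrel{1}{\smallfrown} f_N\|^{2} - \|f_N \star_{1}^{0} f_N\|^{2} = \sum_{i\ne j} R_{ij}\,S_{ij} = \sum_{i\ne j} R_{ij}^{2} \ge 0,$$
as required.

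The only real obstacle is careful index bookkeeping: the reversed-index convention inside the second copy of $f_N$ (present both in $\stackrel{q}{\smallfrown}$ and in $\star_{q+1}^{q}$) must be tracked so that the mirror involution correctly matches $R$ with $S$. Conceptually, nothing deeper is required: the lemma is a combination of the elementary fact that restricting to a diagonal cannot increase the $\ell^2$-norm, together with the positive semidefiniteness of the two naturally associated Gram matrices.
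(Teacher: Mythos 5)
Your proof is correct, and its first half is exactly the ``simple computation'' the paper has in mind (it is the diagonal-restriction mechanism of Proposition 4.1 in the Peccati--Zheng reference that the paper cites as its inspiration): with the paper's index conventions one does have $f_N\star_{q+1}^{q}f_N(t_1,\dots,t_{d-q-1},\gamma,s_1,\dots,s_{d-q-1})=f_N\stackrel{q}{\smallfrown}f_N(t_1,\dots,t_{d-q-1},\gamma,\gamma,s_1,\dots,s_{d-q-1})$, and summing squares over this diagonal slice of $[N]^{2d-2q}$ gives the first inequality for every $q=1,\dots,d-1$ with no hypothesis on $f_N$ whatsoever. Your treatment of the second inequality via the Gram matrices $R$ and $S$ is also correct, and your insistence on the mirror-symmetry hypothesis is not pedantry: the lemma as stated omits it, but the inequality $\|f_N\stackrel{1}{\smallfrown}f_N\|\geq\|f_N\star_1^0 f_N\|$ genuinely fails without it. For instance, with $d=N=2$ and $f_N(1,1)=f_N(1,2)=1$, $f_N(2,1)=-1$, $f_N(2,2)=0$, one computes $\|f_N\stackrel{1}{\smallfrown}f_N\|^2=3$ while $\|f_N\star_1^0 f_N\|^2=\sum_\gamma\big(\sum_t f_N(t,\gamma)^2\big)\big(\sum_s f_N(\gamma,s)^2\big)=5$; so the standing assumption that $f_N$ is mirror symmetric (in force everywhere the lemma is applied) must indeed be invoked. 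Note finally that once mirror symmetry forces $R=S$, the positive semidefiniteness of these matrices plays no role: the claim reduces to $\sum_{i\neq j}R_{ij}^2\geq 0$, which is all you actually use.
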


\begin{teo}
\label{ConvChebySemicircular}
Let $\{S_i\}_i$ be a sequence of freely independent standard semicircular elements in $(\mathcal{A},\varphi)$. If $d\geq 2$, fix integers $h_1,\dots,h_d \geq 1$, with $h_i = h_{d-i+1}$ for all $i=1,\dots,\lfloor \frac{d}{2} \rfloor$ and let $Q_N^{(\bs{h})}(f_N;\cdot)$ be the corresponding Chebyshev sum, as in (\ref{SumCheby2}). The following conditions are equivalent as $N$ goes to infinity:
\begin{enumerate}
\item $Q_N^{(\bs{h})}(f_N;S_1,\dots,S_N)$ converges in law to a standard semicircular random variable $\mathcal{S} \sim \mathcal{S}(0,1)$, freely independent of $\{S_i\}_i$;
\item for every $q=1,\dots, d-1$, $\lim\limits_{N\rightarrow \infty}\|f_N \stackrel{q}{\smallfrown} f_N \| =0$.
\end{enumerate}
\end{teo}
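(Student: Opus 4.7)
The plan is to translate the problem into a statement about Wigner stochastic integrals and then invoke the Fourth Moment Theorem for such integrals (part A of Theorem \ref{premessa}). Since the kernel $f_N$ vanishes on diagonals, in every surviving summand of $Q_N^{(\bs{h})}(f_N;S_1,\dots,S_N)$ the indices $i_1,\dots,i_d$ are pairwise distinct, so formula (\ref{Integral}) applies term by term and yields the representation
$$Q_N^{(\bs{h})}(f_N;S_1,\dots,S_N) \;=\; I_m^S(k_N),$$
with $m = h_1+\cdots+h_d$ and $k_N \in \mathcal{H}^{\otimes m}$ the mirror symmetric kernel defined in (\ref{kN1}), where $\mathcal{H}=L^2(\mathbb{R}_+)$. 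By the Wigner isometry the variance is $\|k_N\|^2 = \|f_N\|^2 = 1$, so the normalization required to apply the Fourth Moment Theorem is satisfied.

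Next, I would invoke part A of Theorem \ref{premessa} applied to the sequence $I_m^S(k_N)$: convergence in law to a standard semicircular random variable is equivalent to the vanishing, as $N\to\infty$, of every non-trivial contraction, i.e.
$$\|k_N \stackrel{r}{\smallfrown} k_N\| \longrightarrow 0 \qquad \text{for every } r=1,\dots,m-1.$$
It therefore suffices to show that this last condition is equivalent to condition (2) of the statement, namely the vanishing of $\|f_N \stackrel{q}{\smallfrown} f_N\|$ for $q=1,\dots,d-1$.

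One direction is immediate from Proposition \ref{contraction5}(i): taking $r = h_1+\cdots+h_q$ gives $\|k_N\stackrel{r}{\smallfrown}k_N\|=\|f_N\stackrel{q}{\smallfrown}f_N\|$, so if all contractions of $k_N$ vanish, so do the discrete contractions of $f_N$. The converse is the key step: one must control also the \emph{star} contractions $\|f_N\star_q^{q-1}f_N\|$, which by Proposition \ref{contraction5}(ii) account for the remaining values of $r$ (those for which $r$ is not of the form $h_1+\cdots+h_q$). Here Lemma \ref{stimacontr} does precisely the required job: the inequalities $\|f_N\star_1^0 f_N\|\le \|f_N\stackrel{1}{\smallfrown}f_N\|$ and $\|f_N\star_{q+1}^q f_N\|\le \|f_N\stackrel{q}{\smallfrown}f_N\|$ for $q=1,\dots,d-1$ cover every star contraction appearing through Proposition \ref{contraction5}(ii). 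Consequently, the vanishing of the $d-1$ discrete contractions of $f_N$ forces the vanishing of all $m-1$ contractions of $k_N$, completing the equivalence.

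The main conceptual obstacle to anticipate is precisely the last step: reducing the $m-1$ contraction conditions on the high-dimensional kernel $k_N$ to just the $d-1$ conditions on $f_N$. Without Lemma \ref{stimacontr} one would need to impose vanishing of the star contractions as an additional hypothesis, which would weaken the statement considerably; the lemma ensures that the semicircular Fourth Moment Theorem for Chebyshev sums takes the same clean form as for ordinary homogeneous sums of semicircular variables.
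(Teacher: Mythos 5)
Your proposal is correct and follows essentially the same route as the paper's own proof: the representation $Q_N^{(\bs{h})}(f_N;S_1,\dots,S_N)=I_m^S(k_N)$ via (\ref{Integral}), the Fourth Moment Theorem for Wigner integrals, Proposition \ref{contraction5} to translate contractions of $k_N$ into ordinary and star contractions of $f_N$, and Lemma \ref{stimacontr} to absorb the star contractions in the converse direction. Your write-up is in fact somewhat more explicit than the paper's about why the lemma covers every star contraction arising from Proposition \ref{contraction5}(ii), but there is no substantive difference in approach.
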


\begin{proof}
Assume that $1$ holds. Then, by virtue of the identity (\ref{Integral}), it is sufficient to remark that \linebreak \mbox{$Q_N^{(\bs{h})}(f_N;S_1,\dots,S_N) = I_{m}^{S}(k_N)$,} with $m=h_1 + \cdots + h_d$, and $k_N$ the kernel given by (\ref{kN1}). At this point, the fourth moment Theorem (see Theorem \ref{FourthMoment}) guarantees the vanishing of the non trivial contractions $\|k_N \stackrel{r}{\smallfrown}k_N \|$, for $r=1,\dots,m-1$. Thanks to Proposition \ref{contraction5}, this in turn implies that the norm $\|f_N \stackrel{q}{\smallfrown} f_N\|$ vanishes in the limit for every $q=1,\dots,d-1$.

To show the converse, it is sufficient to repeat the same reasoning but keeping in mind also the Lemma \ref{stimacontr}.
$ $
\end{proof}

\begin{teo}
\label{ConvFreePoissonPARI}
Let $\{S_i\}_i$ be a sequence of freely independent standard semicircular elements in $(\mathcal{A},\varphi)$, and $Q_N^{(\bs{h})}(f_N;\cdot)$ be a Chebyshev sum as defined in (\ref{SumCheby2}), with both $d$ and $h_1+\cdots +h_d$ even integers. Let $Z(\lambda)$ be a (centered) free Poisson distributed random variable with parameter $\lambda > 0$, freely independent of $\{S_i\}_{i}$, such that:
\begin{equation}
\lim_{N \rightarrow \infty}\varphi\bigg(\big(Q_N^{(\bs{h})}(f_N;S_1,\dots,S_N)\big)^{2}\bigg) = \lambda.
\end{equation}
The following conditions are equivalent:
\begin{itemize}
\item[(i)] $Q_N^{(\bs{h})}(f_N;S_1,\dots,S_N)$ converges in law to $Z(\lambda)$;
\item[(ii)] 
\begin{enumerate}
\item for every $q=1,\dots, d-1$, $q \neq \dfrac{d}{2}$, $\lim\limits_{N \rightarrow \infty}\|f_N \stackrel{q}{\smallfrown} f_N \| = 0$;
\item $\lim\limits_{N \rightarrow \infty}\|f_N \star_{\frac{d}{2}+1}^{\frac{d}{2}}f_N \|= 0 $, and $\lim\limits_{N\rightarrow \infty}\|f_N \stackrel{\frac{d}{2}}{\smallfrown}f_N - f_N\|=0$.
\end{enumerate}
\end{itemize}
\end{teo}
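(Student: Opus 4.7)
The plan is to follow the same strategy used in the proof of Theorem \ref{ConvChebySemicircular}, replacing the semicircular Fourth Moment Theorem with its free Poisson counterpart. First I would apply the identity (\ref{Integral}) to rewrite $Q_N^{(\bs{h})}(f_N; S_1,\dots,S_N) = I_m^{S}(k_N)$, where $m = h_1+\cdots+h_d$ is even by hypothesis and $k_N \in \mathcal{H}^{\otimes m}$ is the mirror symmetric kernel given by (\ref{kN1}). Since the second moment $\|k_N\|^2 = \|f_N\|^2$ already converges to $\lambda$ by assumption, the free Poisson Fourth Moment Theorem for Wigner multiple integrals (see \cite[Theorem 1.4]{NourdinPeccati}) guarantees that $I_m^{S}(k_N)$ converges in law to $Z(\lambda)$ if and only if $\|k_N \stackrel{r}{\smallfrown} k_N\| \to 0$ for every $r = 1, \dots, m-1$ with $r \neq \alpha := m/2$, and in addition $\|k_N \stackrel{\alpha}{\smallfrown} k_N - k_N\| \to 0$.

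Next I would translate these conditions on $k_N$ into conditions on $f_N$ via Propositions \ref{contraction5} and \ref{contraction6}. Because $d$ is even and $h_i = h_{d-i+1}$, one has $\alpha = h_1 + \cdots + h_{d/2}$, so the critical index $r = \alpha$ corresponds to $q = d/2$ in Proposition \ref{contraction5}(i), and Proposition \ref{contraction6}(i) directly identifies the critical defect as $\|f_N \stackrel{d/2}{\smallfrown} f_N - f_N\|$. Every non-critical contraction of $k_N$ falls into one of two types: a \emph{boundary} norm $\|f_N \stackrel{q}{\smallfrown} f_N\|$ (when $r = h_1 + \cdots + h_q$) or an \emph{interior} star norm $\|f_N \star_q^{q-1} f_N\|$ (when $r = h_1 + \cdots + h_{q-1} + t$ with $1 \leq t \leq h_q - 1$). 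With this dictionary in hand, the direction (i)$\Rightarrow$(ii) is immediate: the boundary norms for $q \neq d/2$ yield (ii)(1), the identified critical defect yields the second part of (ii)(2), and the interior norm at $r = \alpha + 1$ gives $\|f_N \star_{d/2+1}^{d/2} f_N\| \to 0$.

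The main obstacle is the reverse direction (ii)$\Rightarrow$(i), where one must produce vanishing of \emph{all} contractions of $k_N$ from a smaller list. The boundary norms for $q \neq d/2$ and the critical defect are handed over directly by (ii); the delicate point is that (ii)(2) singles out only the interior star norm at $q = d/2 + 1$, whereas the Fourth Moment Theorem demands vanishing of every $\|f_N \star_q^{q-1} f_N\|$. Here I would invoke Lemma \ref{stimacontr}, whose bounds $\|f_N \star_{q+1}^q f_N\| \leq \|f_N \stackrel{q}{\smallfrown} f_N\|$ and $\|f_N \star_1^0 f_N\| \leq \|f_N \stackrel{1}{\smallfrown} f_N\|$ dominate every remaining interior star norm by a quantity vanishing under (ii)(1). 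The only edge case requiring separate inspection is $d = 2$, where (ii)(1) is vacuous: in that situation the constraint $h_1 = h_2$ together with the symmetry of $f_N$ yields $\|f_N \star_1^0 f_N\| = \|f_N \star_2^1 f_N\|$, which is already controlled by (ii)(2). Once all $\|k_N \stackrel{r}{\smallfrown} k_N\| \to 0$ ($r \neq \alpha$) together with the critical defect are in hand, the free Poisson Fourth Moment Theorem closes the argument.
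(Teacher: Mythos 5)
Your proposal is correct and coincides with the paper's own (much terser) proof — the reduction $Q_N^{(\bs{h})}(f_N;S_1,\dots,S_N)=I_m^S(k_N)$ followed by Theorem \ref{ConvWignerToFreePoisson}, Propositions \ref{contraction5} and \ref{contraction6}, and Lemma \ref{stimacontr} — and you in fact supply more of the bookkeeping than the paper does, including the $d=2$ case of the reverse implication. The only step to patch is in (i)$\Rightarrow$(ii) when $h_{d/2+1}=1$: there $r=\alpha+1$ is a block boundary rather than an interior index, so it does not produce the star norm; instead observe that $\|f_N\star_{d/2+1}^{d/2}f_N\|\le\|f_N\stackrel{d/2}{\smallfrown}f_N-f_N\|$ always holds, because the star contraction is the restriction of $f_N\stackrel{d/2}{\smallfrown}f_N$ to the diagonal $t_{d/2}=t_{d/2+1}$ on which $f_N$ itself vanishes.
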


\begin{proof}
Again, $Q_N^{(\bs{h})}(f_N;S_1,\dots,S_N) = I_m^{S}(k_N)$, with $m=h_1 + \cdots + h_d$, and $k_N$ as in (\ref{kN1}). Now simply apply Theorem \ref{ConvWignerToFreePoisson}, together with Proposition \ref{contraction5}, Proposition \ref{contraction6} and Lemma \ref{stimacontr}.
\end{proof}

\begin{oss}[\textbf{On the parity of} $\mathbf{d}$.]
Let us remark that for the convergence of a Chebyshev sum towards the free Poisson law, it is not sufficient that the sum of the orders $h_1,\dots,h_d$ is even. Indeed, if $d$ is odd and we assume that $Q_N^{(\bs{h})}(f_N;S_1,\dots,S_N) = I_m^{S}(k_N)$ converges to $Z(\lambda)$, then we would have $\|k_N \stackrel{r}{\smallfrown} k_N \|$ vanishing in the limit for every $r=1,\dots,m-1$, $r\neq \dfrac{m}{2}$. In particular, if $r=h_1 + \cdots + h_{\frac{d-1}{2}}$, $\|k_N  \stackrel{r}{\smallfrown} k_N\| = \| f_N \stackrel{\frac{d-1}{2}}{\smallfrown} f_N \|$ would tend to zero. By virtue of Lemma \ref{stimacontr}, this would imply that $\|f_N \star_{\frac{d+1}{2}}^{\frac{d-1}{2}} f_N \|$ tends to zero. 
On the other hand, $\| f_N \star_{\frac{d+1}{2}}^{\frac{d-1}{2}} f_N \| = \| k_N \stackrel{\frac{m}{2}}{\smallfrown} k_N\|$, that should not tend to zero, yielding a contradiction.

Henceforth we are able to establish conditions for the convergence of a Chebyshev sum towards the free Poisson law only if both $d$ and $h_1+\cdots +h_d$ are even integers.
$ $
\end{oss}

\begin{oss}
From Theorem \ref{ConvChebySemicircular} and Theorem \ref{ConvFreePoissonPARI} with $h_j = 2$ for every $j=1\dots,d$ and with $d$ even, since $U_2(S) \stackrel{\text{law}}{=} Z(1)$, we can deduce explicit conditions for the convergence of a homogeneouos sum as in (\ref{QN}) based on a sequence $\{Z_i\}_i$ of freely independent and centered random variables with free Poisson distribution of parameter $1$, towards the semicircular law (generalizing to the free setting the findings of \cite{PeccatiZeng}) and the free Poisson law (if $d$ is even).
See moreover \cite{Solesne}, Theorem 4.1, for a general fourth moment statement for Free Poisson multiple integrals.
\end{oss}

\subsection{Universality results}

As straightforward consequences of the invariance principle stated in Section 3.1, we will derive possible universal limit laws for vectors of homogeneous sums.  They will have the same nature as the Theorem 7.2 in \cite{NourdinPeccati4}, where the authors prove that the normal distribution is universal for vectors of homogeneous sums with respect to multivariate Gaussian approximation.

To this aim, let the above notation for vectors of Chebyshev sums prevail, except that from now on we shall assume that their kernels $f_N$ are fully symmetric functions. 
In particular, if $d\geq 2$, consider fixed integers $h_1,\dots,h_d$ with $h_i = h_{d-i+1}$ for $i=1,\dots,\lfloor \frac{d}{2}\rfloor$, and a sequence $\bs{X}=\{X_i\}$ of freely independent centered random variables such that $\varphi(U_{h_j}(X_i)) = 0$ and $\varphi(U_{h_j}(X_i)^2) = 1$ for all $j=1,\dots,d$ and for every $i$.
Recall that if $\bs{\mathcal{X}}^{(N)} = (\bs{\mathcal{X}}_1,\dots,\bs{\mathcal{X}}_n)$, with $\bs{\mathcal{X}}_i = (U_{h_1}(X_i),\dots,U_{h_d}(X_i))$ for all $i$, then:
$$ Q_N(\bs{\mathcal{X}^{(N)}}) = \sum_{i_1,\dots,i_d=1}^{N} f_N(i_1,\dots,i_d)U_{h_1}(X_{i_1})\cdots U_{h_d}(X_{i_d}).$$

Let us denote by $\mathcal{NC}_2([n])$ the set of all the \textit{non-crossing pairings} of $[n] = \{1,2,\dots,n\}$, that is the set of all non-crossing partitions of the set $[n]$ where each block has exactly two elements. Of course,  $\mathcal{NC}_2([n])$ is empty if $n$ is odd, while it has $C_{\frac{n}{2}}$ elements if $n$ is even (see \cite{Speicher}). If $s_1,\dots, s_n$ are standard semicircular elements, with covariance $\varphi(s_i s_j) = C_{i,j}$ such that the matrix $C = (C_{i,j})$ is positive definite, the joint moments of $s_1,\dots,s_n$ are completely determined by $C$ according to the following Wick-type formula (see \cite{Speicher}): for every $m$ and every integers $i_1,\dots,i_m \in [n]$,
$$ \varphi(s_{i_1}s_{i_2}\cdots s_{i_n}) = \sum_{\pi \in \mathcal{NC}_2([m])}\prod_{(r,p)\in \pi}\varphi(s_{i_r}s_{i_p}).$$

\begin{teo}
\label{MultUniversality}
Let $d\geq 2$, and let $\mathbf{S} =\{S_i\}_i$ be a sequence of freely independent standard semicircular random variables. Let $(s_1,\dots, s_n)$ be a standard semicircular vector, with covariance $\varphi(s_i s_j) = C_{i,j}$ for every $i,j=1,\dots,n$. Suppose moreover that:
$$ \lim_{N \rightarrow \infty} \varphi\big(Q_N^{(i)}(\bs{\mathcal{S}}^{(N)}) Q_N^{(j)}(\bs{\mathcal{S}}^{(N)})\big) = C_{i,j},$$
with $\bs{\mathcal{S}}^{(N)} = (\bs{\mathcal{S}}_1, ,\dots,\bs{\mathcal{S}}_N)$, and $\bs{\mathcal{S}}_j = (U_{h_1}(S_j),\dots,U_{h_d}(S_j))$.
Then the following assertions are equivalent as $N$ goes to infinity:
\begin{itemize}
\item[(i)]  $Q_N^{(j)}(\bs{\mathcal{S}}^{(N)}) \stackrel{\text{law}}{\longrightarrow} \mathcal{S}(0,C_{j,j})$, for every $j=1,\dots,n$;
\item[(ii)] $(Q_N^{(1)}(\mathbf{X}),\dots, Q_N^{(n)}(\mathbf{X})) \stackrel{\text{ law }}{\longrightarrow }(s_1,\dots,s_n)$ for every sequence $\mathbf{X} = \{X_i\}_i$ of freely independent and identically distributed centered random variables with unit variance.
\end{itemize}

\end{teo}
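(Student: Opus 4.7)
The direction $(ii)\Rightarrow(i)$ is immediate: apply $(ii)$ with $X_i = S_i$ to recover the convergence of each marginal $Q_N^{(j)}(\bs{\mathcal{S}}^{(N)})$ to $s_j\sim \mathcal{S}(0,C_{j,j})$. The substance lies in $(i)\Rightarrow(ii)$, which I plan to obtain by chaining three ingredients already available in the paper: the fourth moment criterion of Theorem \ref{ConvChebySemicircular}, the multidimensional fourth moment theorem for Wigner integrals from \cite{NouSpeiPec}, and the invariance principle of Theorem \ref{Multiinvariance2}.

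First, apply Theorem \ref{ConvChebySemicircular} to each marginal in $(i)$: this yields $\|f_N^{(j)}\stackrel{q}{\smallfrown}f_N^{(j)}\|\to 0$ for every $q=1,\dots,d-1$ and every $j=1,\dots,n$. Full symmetry of $f_N^{(j)}$ makes the diagonal of $f_N^{(j)}\stackrel{d-1}{\smallfrown}f_N^{(j)}$ coincide (up to the factor $d$) with the influence function, and since $\max_i |a_{ii}|\le \|a\|$ for any square array, one obtains the decisive estimate
\[
\tau_N^{(j)}\;\le\;d\,\|f_N^{(j)}\stackrel{d-1}{\smallfrown}f_N^{(j)}\|\;\longrightarrow\;0.
\]
Second, since $Q_N^{(j)}(\bs{\mathcal{S}}^{(N)}) = I_m^S(k_N^{(j)})$ with $m=h_1+\cdots+h_d$ by the identity (\ref{Integral}), the marginal convergence in $(i)$ together with the prescribed limit covariances triggers the multidimensional fourth moment theorem for Wigner integrals of common order, producing the joint limit
\[
\bigl(Q_N^{(1)}(\bs{\mathcal{S}}^{(N)}),\dots,Q_N^{(n)}(\bs{\mathcal{S}}^{(N)})\bigr)\stackrel{\text{law}}{\longrightarrow}(s_1,\dots,s_n).
\]

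Third, with vanishing maximal influences in hand and with both $\bs{S}$ and $\bs{X}$ fitting the hypotheses of Theorem \ref{Multiinvariance2} (the latter as an iid sequence, the former being standard semicircular, hence uniformly bounded), the invariance principle guarantees that every mixed $\varphi$-moment of the vector $\bigl(Q_N^{(1)}(\bs{\mathcal{X}}^{(N)}),\dots,Q_N^{(n)}(\bs{\mathcal{X}}^{(N)})\bigr)$ has the same limit as the corresponding mixed moment of $\bigl(Q_N^{(1)}(\bs{\mathcal{S}}^{(N)}),\dots,Q_N^{(n)}(\bs{\mathcal{S}}^{(N)})\bigr)$. Since the semicircular vector $(s_1,\dots,s_n)$ is uniquely determined by its joint $\ast$-moments via the Wick-type formula, combining with the previous step delivers $(ii)$. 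The main obstacle in this route is Step 2: lifting marginal convergence to joint convergence on the semicircular side, which is precisely what the multidimensional fourth moment theorem purchases and where the covariance hypothesis $\varphi(Q_N^{(i)}Q_N^{(j)})\to C_{i,j}$ is consumed; Step 1 is routine but requires the symmetry of the kernels, while Step 3 is a direct invocation of the already-established invariance principle.
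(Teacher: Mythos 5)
Your treatment of (i) $\Rightarrow$ (ii) follows the paper's own route almost verbatim: vanishing contractions from the fourth moment criterion, the bound $\tau_N^{(j)} \le d\,\|f_N^{(j)} \stackrel{d-1}{\smallfrown} f_N^{(j)}\|$ (this is exactly Lemma \ref{magg}), the lift from marginal to joint convergence on the semicircular side via \cite[Theorem 1.3]{NouSpeiPec}, and the transfer of mixed moments via Theorem \ref{Multiinvariance2} followed by the method of moments. That direction is sound, modulo the harmless normalization issue $C_{j,j}\neq 1$ when invoking Theorem \ref{ConvChebySemicircular}.

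The gap is in the direction you dismiss as immediate. Taking $X_i = S_i$ in (ii) gives convergence of the vector of \emph{plain} homogeneous sums $Q_N^{(j)}(\mathbf{S}) = \sum f_N^{(j)}(i_1,\dots,i_d)\, S_{i_1}\cdots S_{i_d}$, whereas statement (i) concerns the \emph{Chebyshev} sums $Q_N^{(j)}(\bs{\mathcal{S}}^{(N)}) = \sum f_N^{(j)}(i_1,\dots,i_d)\, U_{h_1}(S_{i_1})\cdots U_{h_d}(S_{i_d})$. These are different random variables (living in the Wigner chaoses of orders $d$ and $h_1+\cdots+h_d$ respectively) unless $h_1=\cdots=h_d=1$, so (i) is not an instance of (ii) and your one-line argument does not close. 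The paper bridges this as follows: from $Q_N^{(j)}(\mathbf{S}) \stackrel{\text{law}}{\longrightarrow} \mathcal{S}(0,C_{j,j})$, the fourth moment theorem (Part A of Theorem \ref{premessa}) gives $\|f_N^{(j)}\stackrel{d-1}{\smallfrown}f_N^{(j)}\|\to 0$, hence $\tau_N^{(j)}\to 0$ by Lemma \ref{magg}; a second application of Theorem \ref{Multiinvariance2} (with the semicircular family playing both the $\bs{X}$ and the $\bs{Y}$ roles) then transports all joint moments from $\big(Q_N^{(1)}(\mathbf{S}),\dots,Q_N^{(n)}(\mathbf{S})\big)$ to $\big(Q_N^{(1)}(\bs{\mathcal{S}}^{(N)}),\dots,Q_N^{(n)}(\bs{\mathcal{S}}^{(N)})\big)$, after which the marginal convergence in (i) follows. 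Your proposal already contains every tool needed for this repair, but as written the step is missing.
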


\begin{proof}
$ $
\begin{itemize} 
\item[(i) $\Rightarrow$ (ii)]
Thanks to \cite[Theorem 1.3]{NouSpeiPec}, the hypotheses $Q_N^{(j)}(\mathcal{S}^{(N)})\stackrel{ \text{law}}{\longrightarrow} \mathcal{S}(0,C_{j,j})$ for all $j=1,\dots,n$ are equivalent to the joint convergence $(Q_N^{(1)}(\bs{\mathcal{S}}^{(N)}),\dots, Q_N^{(n)}(\bs{\mathcal{S}}^{(N)})) \stackrel{\text{ law }}{\longrightarrow }(s_1,\dots,s_n)$. 
In particular, we have that \mbox{$\|f_N^{(j)} \stackrel{d -1}{\smallfrown} f_N^{(j)} \| \rightarrow 0$} for every $j$, and therefore it follows that $\tau_N^{(j)} \rightarrow 0$ for $j=1,\dots,n$ as $N$ goes to infinity (see  Lemma \ref{magg}). This in turn trivially implies that \linebreak $\max\limits_{j=1,\dots,n}\tau_N^{(j)} \rightarrow 0$ and the conclusion follows by Theorem \ref{Multiinvariance2}.

\item[(ii) $\Rightarrow$ (i)] In particular we have  $(Q_N^{(1)}(\mathbf{S}),\dots, Q_N^{(n)}(\mathbf{S})) \stackrel{\text{ law }}{\longrightarrow }(s_1,\dots,s_n)$, with $\mathbf{S}$ denoting a sequence of freely independent standard semicirular elements. But this implies that $\max_{j=1,\dots,n}\tau_N^{(j)} \rightarrow 0$, yielding first $(Q_N^{(1)}(\bs{\mathcal{S}}^{(N)}), \dots, Q_N^{(n)}(\bs{\mathcal{S}}^{(N)})) \stackrel{\text{law}}{\longrightarrow} (s_1,\dots,s_n)$ by virtue of Theorem \ref{Multiinvariance2}, and then the desired componentwise convergence.

\end{itemize}
\end{proof}

By very similar arguments, assuming that $d$ is even and by keeping in mind in particular the relation (\ref{magg2}), it is possible to give immediate proofs of the following statement concerning free Poisson approximation for vectors of Chebyshev sums.

\begin{teo}
\label{Poiss2}
Let $d\geq 2$ be even. Let $\mathbf{S} =\{S_i\}_i$ be a sequence of freely independent standard semicircular random variables. Let $s_1,\dots,s_n$ be standard semicircular elements, with $\varphi(s_i s_j) = C_{i,j}$, and set $z_j= s_j^{2}-1$, so that $z_j$ is a centered free Poisson random variable with parameter $1$.
Assume that $Q_N^{(j)}$ is a homogeneous sum of even degree $d$ and assume that $h_1+\cdots+ h_d$ is  even. If $\bs{\mathcal{S}}^{(N)} = (\bs{\mathcal{S}}_1, \dots,\bs{\mathcal{S}}_{N})$, $\bs{\mathcal{S}}_j = (U_{h_1}(S_j),\dots,U_{h_d}(S_j))$, the following assertions are equivalent as $N$ goes to infinity:
\begin{itemize}
\item[(i)] $(Q_N^{(1)}(\bs{\mathcal{S}}^{(N)}),\dots, Q_N^{(n)}(\bs{\mathcal{S}}^{(N)})) \stackrel{\text{ law }}{\longrightarrow }(z_1,\dots,z_n)$;
\item[(ii)] $(Q_N^{(1)}(\mathbf{X}),\dots, Q_N^{(n)}(\mathbf{X})) \stackrel{\text{ law }}{\longrightarrow }(z_1,\dots,z_n)$ for every sequence $\mathbf{X} = \{X_i\}_i$ of freely independent and identically distributed centered random variables with unit variance.
\end{itemize}
\end{teo}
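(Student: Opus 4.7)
The plan is to mirror the proof of Theorem \ref{MultUniversality}, substituting the semicircular multidimensional Fourth Moment Theorem by its Poisson counterpart (Theorem \ref{ConvFreePoissonPARI}), and to exploit it in two incarnations: first with $h_1=\cdots=h_d=1$, so that it supplies contraction criteria for ordinary homogeneous sums of degree $d$ in standard semicirculars (legitimate because $d$ is assumed even, hence $h_1+\cdots+h_d=d$ is even), and second with the given orders $(h_1,\dots,h_d)$ applied to the Chebyshev sums $Q_N^{(j)}(\bs{\mathcal{S}}^{(N)})=I_m^{S}(k_N^{(j)})$, where $m=h_1+\cdots+h_d$ and $k_N^{(j)}$ is the element attached to $f_N^{(j)}$ by (\ref{kN1}). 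In either guise the relevant implication yields the vanishing of $\|f_N^{(j)}\stackrel{q}{\smallfrown}f_N^{(j)}\|$ for every $q\neq d/2$, of $\|f_N^{(j)}\star_{d/2+1}^{d/2}f_N^{(j)}\|$, and of $\|f_N^{(j)}\stackrel{d/2}{\smallfrown}f_N^{(j)}-f_N^{(j)}\|$. Combined with the estimate (\ref{magg2}) invoked in the statement, this forces $\tau_N^{(j)}\to 0$ for every $j$, so $\max_j\tau_N^{(j)}\to 0$, and the invariance principle Theorem \ref{Multiinvariance2} then takes over.

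For the implication (ii) $\Rightarrow$ (i), specialize (ii) to $\mathbf{X}=\mathbf{S}$ to obtain the componentwise convergence $Q_N^{(j)}(\mathbf{S})\to z_j$; apply Theorem \ref{ConvFreePoissonPARI} with $h_j=1$ to deduce, via (\ref{magg2}), that $\max_j\tau_N^{(j)}\to 0$; then Theorem \ref{Multiinvariance2} guarantees that the joint moments of $(Q_N^{(1)}(\bs{\mathcal{S}}^{(N)}),\dots,Q_N^{(n)}(\bs{\mathcal{S}}^{(N)}))$ match asymptotically those of $(Q_N^{(1)}(\mathbf{S}),\dots,Q_N^{(n)}(\mathbf{S}))$, and therefore converge to the corresponding joint moments of $(z_1,\dots,z_n)$. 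Since $(z_1,\dots,z_n)$ is a bounded operator-valued vector with compact joint spectrum, the method of moments closes the argument. Conversely, for (i) $\Rightarrow$ (ii), the componentwise convergence inherent in (i) permits a direct application of Theorem \ref{ConvFreePoissonPARI} to the Chebyshev sums, yielding again $\max_j\tau_N^{(j)}\to 0$; Theorem \ref{Multiinvariance2} applied with $\bs{Y}=\mathbf{X}$ then transfers the joint moment convergence to any sequence $\mathbf{X}=\{X_i\}_i$ of freely independent, identically distributed, centered unit-variance random variables, and the method of moments concludes.

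The delicate point, as in the semicircular setting, is the passage from the Poisson-type contraction conditions to the vanishing of the maximal influence $\tau_N^{(j)}$. For even $d\geq 4$ one may simply reuse the bound of Lemma \ref{magg} applied to $\|f_N^{(j)}\stackrel{d-1}{\smallfrown}f_N^{(j)}\|\to 0$ (valid because $d-1\neq d/2$), exactly as in the proof of Theorem \ref{MultUniversality}; however, when $d=2$ this norm does not vanish in the Poisson regime (since $d-1=d/2=1$), and one must instead control $\tau_N^{(j)}$ through the star-contraction $\|f_N^{(j)}\star_{d/2+1}^{d/2}f_N^{(j)}\|$, whose vanishing is precisely the Poisson-specific output of Theorem \ref{ConvFreePoissonPARI}. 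Both regimes are subsumed by the uniform estimate (\ref{magg2}), which is the only new ingredient needed relative to the proof of Theorem \ref{MultUniversality}; everything else—the invocation of the invariance principle and the use of the method of moments for compactly supported free distributions—proceeds verbatim.
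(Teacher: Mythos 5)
Your argument is correct and follows essentially the route the paper intends (the paper itself only sketches this proof, reducing it to Theorem \ref{MultUniversality} plus the estimate (\ref{magg2})): apply Theorem \ref{ConvFreePoissonPARI} componentwise, once with the given orders and once with $h_1=\cdots=h_d=1$, deduce $\max_j\tau_N^{(j)}\to 0$ from Lemma \ref{magg}, and let Theorem \ref{Multiinvariance2} transfer the joint moments. The only imprecision is the closing claim that ``both regimes are subsumed by (\ref{magg2})'': that bound is stated only for $d=2$, and for even $d\geq 4$ one must use (\ref{magg1}) with $q=d-1\neq d/2$, exactly as your own case analysis in the preceding sentences correctly does.
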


\begin{oss}
So far, it is known that componentwise convergence of multiple Wigner integrals towards the semicircular law implies the joint convergence (see \cite[Theorem 1.3]{NouSpeiPec}), but similar results are still missing for Free Poisson approximation. This is the reason why, in Theorem \ref{Poiss2}, we assume the joint convergence of the vector $(Q_N^{(1)}(\bs{\mathcal{S}}^{(N)}),\dots, Q_N^{(n)}(\bs{\mathcal{S}}^{(N)}))$.
\end{oss}

\begin{oss}
If we set $h_j =h$ for all $j=1,\dots,d$, the previous universality results state that sequences of the type $\{U_h(S_i)\}_i$ (belonging to the $h$-th Wigner Chaos) behave universally (for vectors of homogeneous sums) with respect to both semicircular and free Poisson approximation (if $d$ is even), generalizing the universality results established in \cite[Theorem 1.4]{NourdinDeya}, corresponding to the case $h=1$.

In particular, if $h =2$, the corresponding universality statements concerns vectors of homogeneous sums based on a sequence of centered free Poisson random variables of parameter $1$, with respect to both semicircular and free Poisson approximation (when $d$ is an even integer).
\end{oss}

\begin{oss}
Since the conditions required to the kernels of $Q_N^{(\bs{h})}(f_N;S_1,\dots,S_N)$ for the convergence towards the semicircular and the free Poisson laws do not depend on the choice of the orders $h_1,\dots,h_d$, we can conclude that the convergence of a vector of Chebyshev sums of given orders $(h_1,\dots,h_d)$, based on a semicircular system, towards both the semicircular and the free Poisson law, is equivalent to the convergence towards that laws for any other vector of Chebyshev sums with the same kernels. In particular, this holds true for homogeneous sums based on the $h$-th Chebyshev polynomial, for a given $h\geq 1$. We are going to make explicit these remarks only in the one dimensional case for notational convenience.
\end{oss}

\begin{cor}
Let $Q_N$ be the homogeneous sum defined in (\ref{QN}), with $d\geq 2$ and symmetric kernel, and let $\{S_i\}_i$ be a sequence of freely independent standard semicircular random variables. The following assertions are equivalent as $N$ goes to infinity:
\begin{itemize}
\item there exist integers $h_1,\dots,h_d$, with $h_i = h_{d-i+1}$ for $i= 1,\dots, \lfloor \frac{d}{2}\rfloor$, such that $$ Q_N(U_{h_1}(S_1),\dots,U_{h_{d}}(S_N)) \stackrel{\text{ law }}{\longrightarrow} \mathcal{S} \;;$$
\item for every $k_1,\dots,k_d$ such that $k_i = k_{d-i+1}$ for $i= 1,\dots, \lfloor \frac{d}{2}\rfloor$, 
$$Q_N(U_{k_1}(S_1),\dots,U_{k_d}(S_N)) \stackrel{\text{ law }}{\longrightarrow} \mathcal{S}.$$
\end{itemize}
\end{cor}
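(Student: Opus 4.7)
The plan is to observe that Theorem \ref{ConvChebySemicircular} provides a characterization of the semicircular convergence of $Q_N^{(\bs{h})}(f_N;S_1,\dots,S_N)$ purely in terms of the vanishing of the kernel contractions $\|f_N \stackrel{q}{\smallfrown} f_N\|$ for $q=1,\dots,d-1$. Crucially, this condition involves only the kernel $f_N$ and is completely independent of the choice of orders $h_1,\dots,h_d$. Therefore the semicircular convergence of the Chebyshev sum, for any admissible choice of orders, is equivalent to one and the same analytic condition on the kernel.

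More precisely, for the non-trivial implication, assume there exist integers $h_1,\dots,h_d$ with $h_i = h_{d-i+1}$ for $i=1,\dots,\lfloor \frac{d}{2}\rfloor$ such that
\[
Q_N^{(\bs{h})}(f_N;S_1,\dots,S_N) \stackrel{\text{law}}{\longrightarrow} \mathcal{S}.
\]
Applying the implication (1)$\Rightarrow$(2) of Theorem \ref{ConvChebySemicircular} yields $\lim_{N\to\infty}\|f_N \stackrel{q}{\smallfrown} f_N\| = 0$ for every $q=1,\dots,d-1$. Now fix any other vector of orders $(k_1,\dots,k_d)$ with $k_i = k_{d-i+1}$: the kernel $f_N$ is unchanged, the assumptions of Theorem \ref{ConvChebySemicircular} are met for the new Chebyshev sum $Q_N^{(\bs{k})}(f_N;S_1,\dots,S_N)$, and the vanishing of the contractions of $f_N$ (now playing the role of hypothesis (2) for the new orders) gives by the implication (2)$\Rightarrow$(1) the convergence
\[
Q_N^{(\bs{k})}(f_N;S_1,\dots,S_N) \stackrel{\text{law}}{\longrightarrow} \mathcal{S}.
\]

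The converse implication is immediate, since the universal quantifier over $(k_1,\dots,k_d)$ specializes in particular to any chosen $(h_1,\dots,h_d)$ (for instance the $h_i = 1$ case, recovering the classical homogeneous sum). There is no serious obstacle: the entire content is packaged in Theorem \ref{ConvChebySemicircular}, and the corollary is essentially a restatement of the fact that the characterizing condition (2) of that theorem is a property of $f_N$ alone. The only minor care required is to ensure that the mirror symmetry constraint on the orders is preserved on both sides, which is guaranteed by hypothesis.
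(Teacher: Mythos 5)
Your proposal is correct and follows exactly the route the paper intends: the remark immediately preceding the corollary points out that the characterizing condition of Theorem \ref{ConvChebySemicircular} (vanishing of the contractions $\|f_N \stackrel{q}{\smallfrown} f_N\|$, $q=1,\dots,d-1$) depends only on the kernel $f_N$ and not on the orders $h_1,\dots,h_d$, and the corollary is obtained precisely by applying the implication (1)$\Rightarrow$(2) for the given orders and then (2)$\Rightarrow$(1) for any other admissible choice of orders. Nothing is missing.
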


\begin{cor}
\label{coroll1}
Let $Q_N$ be the homogeneous polynomial defined in (\ref{QN}), with $d \geq 2$ and with symmetric kernel. The following assertions are equivalent as $N$ goes to infinity:
\begin{itemize}
\item if $\{S_i\}_i$ is a sequence of freely independent standard semicircular random variables, then
 $$Q_N(S_1,\dots,S_N) \stackrel{\text{ law }}{\longrightarrow} \mathcal{S};$$
\item if $\{Z_i\}_i$ is a sequence of freely independent centered random variables with free Poisson distribution of parameter $1$, then $$Q_N(Z_1,\dots,Z_N) \stackrel{\text{ law }}{\longrightarrow} \mathcal{S}.$$
\end{itemize}
\end{cor}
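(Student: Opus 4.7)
The plan is to reduce everything to the one-dimensional case ($n=1$) of Theorem \ref{MultUniversality}, applied twice with different choices of the order vector $\bs{h}$. The pivotal ingredient is the law identification from point (x) of Section 2.1: $U_2(S)\stackrel{\text{law}}{=}Z(1)$ for a standard semicircular $S$. Because $\{S_i\}_i$ are freely independent and each $U_2(S_i)$ lies in the unital subalgebra generated by $S_i$, the sequence $\{U_2(S_i)\}_i$ is itself freely independent and identically distributed with the common law of $Z(1)$. Consequently,
\[
Q_N(Z_1,\dots,Z_N) \stackrel{\text{law}}{=} Q_N(U_2(S_1),\dots,U_2(S_N)) \qquad \text{for every } N.
\]

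For the forward implication, I would choose $h_1=\cdots=h_d=1$, so that $\bs{\mathcal{S}}^{(N)}=(S_1,\dots,S_N)$ and $Q_N^{(\bs{h})}(f_N;\bs{\mathcal{S}}^{(N)})=Q_N(S_1,\dots,S_N)$. Hypothesis (i) of Theorem \ref{MultUniversality} is then exactly the standing assumption $Q_N(S_1,\dots,S_N)\to\mathcal{S}$ (the covariance condition $\varphi(Q_N(S)^2)\to 1$ reduces via the Wigner isometry to $\|f_N\|^2=1$, which is implicit in the setup). Conclusion (ii) applied to the iid, centered, unit-variance sequence $X_i=Z_i$ then delivers $Q_N(Z_1,\dots,Z_N)\to\mathcal{S}$.

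For the reverse implication, I would take $h_1=\cdots=h_d=2$. The law identity above recasts the hypothesis $Q_N(Z_1,\dots,Z_N)\to\mathcal{S}$ as $Q_N^{(\bs{h})}(f_N;\bs{\mathcal{S}}^{(N)})=Q_N(U_2(S_1),\dots,U_2(S_N))\to\mathcal{S}$. The moment requirements $\varphi(U_2(S_i))=0$ and $\varphi(U_2(S_i)^2)=1$ are immediate from $U_2(x)=x^2-1$ together with the Catalan moments of the semicircle; the covariance condition again reduces to $\|f_N\|^2=1$ through Proposition \ref{contraction5}. Hypothesis (i) of Theorem \ref{MultUniversality} thus holds, and conclusion (ii) applied to $X_i=S_i$ yields $Q_N(S_1,\dots,S_N)\to\mathcal{S}$.

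I do not anticipate a serious obstacle: the whole argument is a bookkeeping application of the multidimensional universality, specialized to dimension one. The only conceptual feature, already stressed in the remark preceding the corollary, is that the characterizing contraction conditions in Theorem \ref{ConvChebySemicircular} are independent of the order vector $\bs{h}$, and it is precisely this feature that licenses toggling between $h=1$ and $h=2$ on the same kernel $f_N$.
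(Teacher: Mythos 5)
Your proof is correct, but it follows a different (and somewhat heavier) route than the one the paper intends. The paper derives Corollary \ref{coroll1} from the remark immediately preceding it: by Theorem \ref{ConvChebySemicircular}, the convergence $Q_N^{(\bs{h})}(f_N;S_1,\dots,S_N)\stackrel{\text{law}}{\longrightarrow}\mathcal{S}$ is characterized by the vanishing of the contractions $\|f_N\stackrel{q}{\smallfrown}f_N\|$ for $q=1,\dots,d-1$, a condition that does not involve the orders $h_1,\dots,h_d$ at all; hence convergence for $\bs{h}=(1,\dots,1)$ is equivalent to convergence for $\bs{h}=(2,\dots,2)$, and the latter is the same as convergence of $Q_N(Z_1,\dots,Z_N)$ via $U_2(S)\stackrel{\text{law}}{=}Z(1)$. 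This uses only the fourth-moment theorem and Proposition \ref{contraction5}, never the Lindeberg machinery. You instead apply Theorem \ref{MultUniversality} (with $n=1$) twice, once with $h=1$ and once with $h=2$, each time feeding the hypothesis into implication (i)$\Rightarrow$(ii) and specializing the target sequence; since the proof of that implication itself routes through the contraction characterization, Lemma \ref{magg} and the invariance principle of Theorem \ref{Multiinvariance2}, your argument subsumes the paper's but invokes strictly more. What your version buys is a stronger intermediate conclusion (universality over \emph{all} i.i.d.\ free inputs, not just the two sequences at hand); what the paper's version buys is economy, needing nothing beyond the order-independence of the contraction conditions. All the bookkeeping in your write-up (centering and unit variance of $U_2(S_i)$, the isometry $\varphi(Q_N^{(\bs{h})}(f_N;S)^2)=\|f_N\|^2=1$, the distributional identity $Q_N(Z_1,\dots,Z_N)\stackrel{\text{law}}{=}Q_N(U_2(S_1),\dots,U_2(S_N))$ from freeness plus equality of marginals) is accurate; the only small slip is citing Proposition \ref{contraction5} for the variance normalization, which really just follows from the Wigner isometry and the orthonormality of the elementary tensors in (\ref{kN1}).
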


\begin{ex}
As an application of the Corollary \ref{coroll1}, consider the homogeneous sum:
$$ Q_N(x_1,\dots,x_N) = \dfrac{1}{\sqrt{2N-2}}\sum_{i=2}^{N}(x_1 x_i + x_i x_1).$$
As shown in \cite{NourdinDeya} in the first counterexample, $Q_N(S_1,\dots,S_N)$ converges in law to $\dfrac{1}{\sqrt{2}}(S_1 S_2 + S_2 S_1)$, and therefore its limit is neither semicircular nor free Poisson distributed (being Tetilla distributed, see \cite{NourdinDeya2}). Corollary \ref{coroll1} gives the additional information that even $Q_N(Z_1,\dots,Z_N)$ cannot converge towards that laws, nor can any other sequence $\{Q_N(U_h(S_1),\dots,U_h(S_N))\}$, $h\geq 3$. 

Remark that with this counterexample the authors were meant to show that the free Rademacher law ($ \mu = \dfrac{1}{2}\delta_1 + \dfrac{1}{2}\delta_{-1}$) is not universal for homogeneous sums. Indeed, they proved that if $\{X_i\}_i$ is a sequence of freely independent Rademacher random variables, then $Q_N(X_1,\dots,X_N)$ has asymptotically semicircular distribution. This is consistent with the fact that the free Rademacher law is not admissible for any chaotic random variable of the type $U_n(S)$, and it implies in turn that the Tetilla law 
cannot be a universal limit law for homogeneous sums of freely independent random variables.
$ $
\end{ex}

\begin{oss}
Thanks to a careful inspection of all the previous proofs, and by considering the estimate (\ref{magg1}), we can conclude that if $d\geq 2$, and $f_N$ is a fully symmetric kernel satisfying $ \|f_N \stackrel{d-1}{\smallfrown} f_N \| \rightarrow 0$ as $N \rightarrow \infty$, then the limit distribution of $Q_N^{(\bs{h})}(f_N;X_1,\dots,X_N)$ (and in particular that of $Q_N(X_1,\dots,X_N)$ with $Q_N$ as in (\ref{QN})), never depends on the distribution of the sequence $\{X_i\}_i$.
\end{oss}

\begin{paragraph}{About classical universality results}
Let us remark how the invariance principle stated in \cite{Mossel} hides similar results for classical probability spaces. Indeed, consider a probability space $(\Omega, \mathcal{F}, \mathbb{P})$, and let $\{X_i\}_i$ be a sequence of independent random variables on it. If $\{H_n(x)\}_n$ denotes the sequence of the (monic) Hermite polynomials, assume that for fixed integers $n_1,\dots,n_d$, $H_{n_j}(X_i)$ is centered and has unit variance for every $i$ and every $j$, and that the third moments are uniformly bounded, say  $\mathbb{E}[|H_{n_j}(X_i)|^{3}] < B$ for all $i$, in such a way that the systems $\bs{\mathcal{X}}^{(i)} = \{H_{n_1}(X_i),\dots, H_{n_d}(X_i)\}$ are $(2,3,\eta)$-hypercontractive. Under these assumptions, if $\{Y_i\}_i$ denotes another sequence of centered independent random variables, having unit variance, and $(2,3,\eta)$-hypercontractive, for every function $\psi \in \mathcal{C}^{3}(\mathbb{R})$ with uniformly bounded third derivative, it holds true that:
$$|\mathbb{E}[\psi\big(Q_N(\bs{\mathcal{X}}^{N})\big)] - \mathbb{E}[\psi\big(Q_N(Y_1,\dots,Y_N)\big)]| \leq C_{\eta, B,\psi} \;(\tau_N)^{\frac{1}{2}}.$$
In particular, if $\mathcal{H}$ denotes a (separable) Hilbert space, and $\bs{X} = \{X(e): e \in \mathcal{H}\}$ is an isonormal Gaussian process on it, consider $X_j = X(e_j)$ with $\|e_j \| = 1$, so that $X_j \sim \mathcal{N}(0,1)$. It is a standard result that $\dfrac{1}{n!}H_{n}(X_i) = I_{n}^{X}(h_i^{\otimes n})$ is centered, with unit variance, and hypercontractive (see, for instance, \cite{Peccatilibro}). 

If now we consider an orthonormal basis $\{e_j\}_j $ of $\mathcal{H}$, the associated sequence $\{X_i\}_i$ is a sequence of independent standard normal variables, and $Q_N(\bs{\mathcal{X}}^{(N)}) = I_m^{X}(k_N)$, with $k_N$ as in (\ref{kN1}). Here we can apply all the \textit{fourth moment}-type results for the convergence of chaotic random variables towards the Gaussian and the Gamma distributions (\cite{NourdinPeccati2}, Theorem 1.2), and get the corresponding universality results (see \cite{NourdinPeccati4}). In particular, if we choose $n_j= n \geq 1$ for all $j=1,\dots,d$, we can deduce that homogeneous sums based on chaotic random variables of the form $H_n(X_i)$  behave universally with respect to both the Gaussian and the Gamma approximation. Note that $k_N$ is not symmetric in general, but if $\widetilde{k}_N$ denotes its standard symmetrization, then $I_m^{X}(k_N) = I_m^{X}(\widetilde{k_N})$.
\end{paragraph}

\paragraph{Concluding remarks}

All the previous results leave opened the possibility for further generalizations to free stochastic integrals with respect to a free Poisson measure $P$ with intensity measure given by the Lebesgue measure $\mu$. More precisely, consider the kernel:
\begin{equation}
\label{gN}
g_N = \sum_{i_1,\dots,i_d=1}^{N}f_N(i_1,\dots,i_d)e_{i_1}\otimes \cdots \otimes e_{i_d},
\end{equation}
with $e_j = \mathbb{1}_{A_j}$, for $A_j$ measurable set with $\mu(A_j) = 1$. If $Q_N$ denotes the homogeneous sum as in (\ref{QN}), then  $Q_N(Z_1,\dots,Z_N) = I_d^{P}(g_N)$, and therefore we have results of convergence for free Poisson integrals towards semicircular and free Poisson laws for simple kernels. We believe that this approach could be extended to more general kernels, but this investigation is left for further work.

Similarly, we believe that the approach we have proposed could fit the more general framework of the stochastic integration with respect to the $q$-Brownian motion, with the $q$-Hermite polynomials replacing the Chebyshev polynomials. Note that, at least for $q \in [0,1]$ and for symmetric kernels, a fourth moment theorem has been recently established (see \cite{qbrownian}). Again, this line of research is left open for further investigation.

\section{Proofs}

\subsection{Auxiliary statements}

The proofs of the universality results are based on the following upper bounds for $\tau_N = \max\limits_{i=1,\dots,N}\mathrm{Inf}_i(f_N)$.
\begin{lemma}
\label{magg}
Let $d \geq 2$, and let $f_N:[N]^{d}\rightarrow \mathbb{R}$ be a symmetric kernel, vanishing on diagonals. If $d \geq 2$, then
\begin{equation}
\label{magg1}
\|f_N \stackrel{d-1}{\smallfrown}f_N\| \geq \dfrac{1}{d}\tau_N.
\end{equation}
Moreover, if $d=2$, then
\begin{equation}
\label{magg2}
\|f_N \stackrel{1}{\smallfrown}f_N - f_N \| \geq \dfrac{1}{2}\tau_N.
\end{equation}
\end{lemma}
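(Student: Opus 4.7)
The plan is to obtain both inequalities by restricting to the diagonal $t=s$ and exploiting the full symmetry of $f_N$ to rewrite the diagonal value of the contraction directly in terms of $\mathrm{Inf}_t(f_N)$.

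For the first bound, I would expand the squared norm as
\begin{equation*}
\|f_N \stackrel{d-1}{\smallfrown} f_N\|^{2} = \sum_{t,s=1}^{N}\Bigl(\sum_{i_1,\dots,i_{d-1}=1}^{N} f_N(t,i_1,\dots,i_{d-1})\,f_N(i_{d-1},\dots,i_1,s)\Bigr)^{2},
\end{equation*}
and, using that $f_N$ is fully symmetric, rewrite the inner sum as $\sum_{i_1,\dots,i_{d-1}}f_N(t,i_1,\dots,i_{d-1})f_N(s,i_1,\dots,i_{d-1})$. Taking only the diagonal terms $t=s$ and again using symmetry one gets
\begin{equation*}
(f_N \stackrel{d-1}{\smallfrown} f_N)(t,t) = \sum_{i_1,\dots,i_{d-1}=1}^{N} f_N(t,i_1,\dots,i_{d-1})^{2} = \tfrac{1}{d}\,\mathrm{Inf}_t(f_N),
\end{equation*}
by the simplified expression for the influence function of a symmetric kernel. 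Dropping all the off-diagonal terms (which are nonnegative since we are summing squares) yields
\begin{equation*}
\|f_N \stackrel{d-1}{\smallfrown} f_N\|^{2} \;\geq\; \sum_{t=1}^{N} \tfrac{1}{d^{2}}\,\mathrm{Inf}_t(f_N)^{2} \;\geq\; \tfrac{1}{d^{2}}\,\tau_N^{2},
\end{equation*}
from which \eqref{magg1} follows by taking square roots.

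For the second bound, with $d=2$, I would apply the same diagonal-restriction idea to $f_N\stackrel{1}{\smallfrown}f_N - f_N$. At $t=s$ the contraction gives $(f_N\stackrel{1}{\smallfrown}f_N)(t,t)=\sum_i f_N(t,i)^{2}=\tfrac{1}{2}\mathrm{Inf}_t(f_N)$, while $f_N(t,t)=0$ thanks to the vanishing-on-diagonals hypothesis. Hence $\|f_N\stackrel{1}{\smallfrown}f_N - f_N\|^{2}\geq \sum_t \tfrac{1}{4}\mathrm{Inf}_t(f_N)^{2}\geq \tfrac{1}{4}\tau_N^{2}$, which gives \eqref{magg2}.

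There is no real obstacle here: the only point that needs a little care is that the vanishing-on-diagonals assumption is genuinely used in the second inequality (without it one would get $\|\,f_N\stackrel{1}{\smallfrown}f_N - f_N\|$ only bounded below by $\tfrac{1}{2}|\mathrm{Inf}_t(f_N)-2f_N(t,t)|$), and that the full symmetry of $f_N$ is what allows the identification of the diagonal contraction value with a multiple of $\mathrm{Inf}_t(f_N)$; the mirror-symmetry alone would not suffice. Everything else is just dropping off-diagonal nonnegative contributions and passing from an $\ell^{2}$ sum of influences to its maximum.
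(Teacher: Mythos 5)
Your proposal is correct and follows essentially the same route as the paper's proof: restrict the squared norm to the diagonal terms $t=s$, identify $(f_N\stackrel{d-1}{\smallfrown}f_N)(t,t)$ with $\tfrac{1}{d}\mathrm{Inf}_t(f_N)$ via full symmetry (and use vanishing on diagonals to kill $f_N(t,t)$ in the $d=2$ case), and then pass to the maximum over $t$. The only cosmetic difference is that you bound $\sum_t \mathrm{Inf}_t(f_N)^2$ below by $\tau_N^2$ while the paper keeps a single index $i$ and maximizes at the end, which is the same estimate.
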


\begin{proof}
By carrying out the same computations as in the proof of the Theorem 1.4 in \cite{NourdinDeya}, if $d > 2$, we obtain the following estimates:
\begin{align*}
\|f_N \stackrel{d-1}{\smallfrown}f_N\|^{2} &= \sum\limits_{i_1,i_2=1}^{N}\bigg(f_N \stackrel{d-1}{\smallfrown} f_N (i_1,i_2)\bigg)^{2} \;\geq \;\sum\limits_{i=1}^{N}\bigg(f_N \stackrel{d-1}{\smallfrown} f_N(i,i)\bigg)^{2} \\
&= \sum\limits_{i=1}^{N}\bigg(\sum_{j_2,\dots,j_d=1}^{N}f_N(i,j_2,\dots,j_d)^{2}\bigg)^{2} \; \geq\; \bigg(\sum_{j_2,\dots,j_d=1}^{N}f_N(i,j_2,\dots,j_d)^{2}\bigg)^{2}
\end{align*}
for every $i=1,\dots, N$, and so, by taking the square root on both sides, we have that for every $i=1,\dots, N$,
$$ \| f_N \stackrel{d-1}{\smallfrown}f_N\|\geq \dfrac{1}{d}\mathrm{Inf}_{i}(f_N),$$
from which $\| f_N \stackrel{d-1}{\smallfrown}f_N\| \geq \frac{1}{d}\max\limits_{i=1,\dots,N}\mathrm{Inf}_{i}(f_N) = \dfrac{1}{d}\tau_{N}$.
In the case $d=2$, to get an upper bound for $\tau_N$, we have to consider a different chain of inequalities, namely:
\begin{align*}
\|f_N \stackrel{1}{\smallfrown}f_N - f_N \|^{2} &= \sum_{i,j=1}^{N}\bigg(f_N \stackrel{1}{\smallfrown} f_N (i,j) - f_N(i,j)\bigg)^{2} \\
&= \sum_{i \neq j=1}^{N}\bigg(f_N \stackrel{1}{\smallfrown} f_N (i,j) - f_N(i,j)\bigg)^{2} + \sum_{i=1}^{N}\big(f_N \stackrel{1}{\smallfrown} f_N (i,i)\big)^{2}   \\
&\geq \sum_{i=1}^{N}\bigg(\sum_{k=1}^{N}f_N(i,k)^{2}\bigg)^{2} \;\geq \bigg(\sum_{k=1}^{N}f_N(i,k)^{2}\bigg)^{2}
\end{align*}
for ever $i=1,\dots, N$, from which $\|f_N \stackrel{1}{\smallfrown}f_N - f_N \| \geq  \dfrac{1}{2}\mathrm{Inf}_i(f_N)$ and in particular $\|f_N \stackrel{1}{\smallfrown}f_N - f_N \| \geq \dfrac{1}{2}\tau_N$.
$ $
\end{proof}

The following lemma is meant to generalize the relations given in the Lemma 3.1 in \cite{NourdinDeya}: the proof follows straightforwardly.
\begin{lemma}
\label{3.1bis}
Let $\{\mathcal{A}_i\}_i$ be a sequence of freely independent unital subalgebras of $\mathcal{A}$, with $(\mathcal{A},\varphi)$ a fixed von Neumann algebra. Let $\mathcal{B}$ be a unital subalgebra of $\mathcal{A}$, freely independent of $\{\mathcal{A}_i\}$. For every $B_1,B_2  \in \mathcal{B}$, and $C_p \in \mathcal{A}_p$, centered and with unit variance,
\begin{enumerate}
\item for every $r, s \geq 0$, and every $p_1,\dots,p_s \in \mathbb{N}$,
$\varphi(C_{p_1}\cdots C_{p_r} B_i C_{p_{r+1}}\cdots C_{p_s}) = 0$ \; ;
\item  if $\mathcal{D}$ is any other unital subalgebra freely independent of $\{\mathcal{A}_i\}$,
for every $0\leq r < s \leq k$, and $m_1,\dots,m_k \in \mathbb{N}$, such that there exists at least one $j=r+1,\dots,s$ with $m_{j} = 1$, and any centered element $Z $ in $\mathcal{D}$ with unit variance,
$$ \varphi(C_{p_1}^{m_1}\cdots C_{p_r}^{m_r}B_1 C_{p_{r+1}}^{m_{r+1}}\cdots C_{p_s}^{m_s} B_2 C_{p_{s+1}}^{m_{s+1}}\cdots C_{p_k}^{m_k}) = \varphi(C_{p_1}^{m_1}\cdots C_{p_r}^{m_r} Z C_{p_{r+1}}^{m_{r+1}}\cdots C_{p_s}^{m_s} Z C_{p_{s+1}}^{m_{s+1}}\cdots C_{p_k}^{m_k}), $$
for every choice of integers $p_1 \neq p_2 \neq \cdots \neq p_{r}$, $p_{r+1}\neq p_{r+2}\neq \cdots \neq p_s$, $p_{s+1}\neq p_{s+2}\neq \cdots \neq p_{k}$;
\item if $B =B_1 = B_2$, then:
$$ \varphi(C_{p_1}^{m_1}\cdots C_{p_r}^{m_r}B C_{p_{r+1}}^{m_{r+1}}\cdots C_{p_s}^{m_s} B C_{p_{s+1}}^{m_{s+1}}\cdots C_{p_k}^{m_k}) = \varphi(C_{p_1}^{m_1}\cdots C_{p_r}^{m_r} Z C_{p_{r+1}}^{m_{r+1}}\cdots C_{p_s}^{m_s} Z C_{p_{s+1}}^{m_{s+1}}\cdots C_{p_k}^{m_k}) $$
for every $r \leq s\leq k, m_j = 0 $ or $m_j \geq 2$ for all $j=r+1,\dots,s$.
\end{enumerate}
$ $
\end{lemma}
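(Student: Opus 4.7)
The plan is to deduce each of the three items from the defining property of freeness, or equivalently from the moment--free-cumulant formula combined with the vanishing of mixed free cumulants between freely independent subalgebras.

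For item~(1), I would split $B_i = (B_i - \varphi(B_i)\cdot 1) + \varphi(B_i)\cdot 1$. The centered summand lies in $\mathcal{B}$ and is sandwiched inside an alternating centered word from the freely independent subalgebras $\{\mathcal{A}_{p_j}\}$, so freeness of $\mathcal{B}$ from $\{\mathcal{A}_j\}$ forces its $\varphi$-image to be zero. The scalar summand reduces the quantity to $\varphi(B_i)\cdot \varphi(C_{p_1}\cdots C_{p_s})$, whose second factor vanishes by freeness of the $\mathcal{A}_j$'s applied to the (implicitly alternating) word of centered $C$'s, exactly as in the original Lemma~3.1 of \cite{NourdinDeya}.

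For items~(2) and~(3), I would expand each $\varphi$-expression via the moment--free-cumulant formula. Because $\mathcal{B}$ (respectively $\mathcal{D}$) is freely independent of $\{\mathcal{A}_j\}$ and the $\mathcal{A}_j$'s are freely independent among themselves, every mixed cumulant containing factors from more than one of these subalgebras vanishes; hence in the sum over non-crossing partitions only those $\pi$ whose blocks respect the algebra labelling survive. Each such $\pi$ factorizes the contribution into a piece depending solely on the inserted factors (through cumulants of $\{B_1,B_2\}$ or $\{Z,Z\}$) and a piece depending solely on the $C_{p_j}^{m_j}$'s, and the latter is identical on the two sides being compared.

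The crux is then that only the first and second cumulants of the inserted pair can actually enter. In~(3), the hypothesis $m_j = 0$ or $m_j \geq 2$ for $j = r+1,\dots,s$ rules out any isolated centered $C$-factor between the two copies of $B$, so the non-crossing constraint leaves only the configurations ``two singletons at the $B$-slots'' or ``one pair at the $B$-slots''; since $B$ and $Z$ are both centered with $\varphi(B^{2}) = 1 = \varphi(Z^{2})$, the relevant cumulants $r_1$ and $r_2$ match and the two sides coincide. In~(2), the presence of the singleton $m_{j_0} = 1$ in the middle range, coupled with the alternation $p_{r+1}\neq \cdots \neq p_s$, restricts the admissible non-crossing partitions even further; once the contribution of the forced centered singleton $C_{p_{j_0}}$ (equal to $r_1(C_{p_{j_0}}) = 0$) is factored in, the surviving configurations on either side again involve only the first- and second-order marginal moments of the inserted factors, which are the same for $(B_1,B_2)$ and $(Z,Z)$. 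The main obstacle I anticipate is the combinatorial bookkeeping of admissible non-crossing partitions under the various alternation conditions, in particular handling non-adjacent repetitions among the $p_j$'s; no free-probability machinery beyond what has already been recalled in Section~2 is needed.
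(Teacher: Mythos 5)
Your argument is sound, but note that the paper does not actually supply a proof of this lemma: it merely asserts that the statement ``follows straightforwardly'' as a generalization of Lemma 3.1 in \cite{NourdinDeya}, whose proof proceeds by the elementary device of writing each factor as its centering plus its mean and invoking the definition of freeness on the resulting alternating centered words. Your route for items (2) and (3) --- the moment--cumulant formula plus the vanishing of mixed cumulants --- is therefore genuinely different, and arguably cleaner: since the inserted pair occupies only two slots of the word, the only blocks it can form are two singletons or one pair, so each side equals $\varphi(B_1)\varphi(B_2)\cdot A+r_2(B_1,B_2)\cdot A'$, where $A$ and $A'$ are sums over (suitably restricted) non-crossing partitions of the $C$-slots that are identical for the $(B_1,B_2)$-word and the $(Z,Z)$-word. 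This shows the identity holds as soon as $B_1,B_2$ are centered with $\varphi(B_1B_2)=\varphi(Z^2)$, \emph{irrespective} of the conditions on the $m_j$'s and of the alternation of the $p_j$'s; the hypotheses you invoke to prune the admissible partitions are artifacts of the elementary proof and of the way the lemma is applied in Theorem \ref{Multiinvariance2}, not ingredients your argument actually needs.

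Two caveats, both traceable to the looseness of the statement rather than to a defect of your proof. First, items (2)--(3) are false for arbitrary $B_1,B_2\in\mathcal{B}$; you correctly (if tacitly) assume that $B_1,B_2$ are centered with $\varphi(B_1B_2)=\varphi(Z^2)=1$, which is exactly what the application to $U_{h_l}(X_i)$ and $U_{h_{d-l+1}}(X_i)$ (with $h_l=h_{d-l+1}$) provides. Second, in item (1) your reduction of the scalar part to $\varphi(B_i)\,\varphi(C_{p_1}\cdots C_{p_s})$ is correct, but the second factor need not vanish unless $p_r\neq p_{r+1}$ (for instance $\varphi(C_1BC_1)=\varphi(B)$), so the word obtained by deleting the $\mathcal{B}$-slot is not ``implicitly alternating'' in general. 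One must therefore either assume full alternation across the deleted slot or, as in the paper's application, take $\varphi(B_i)=0$, in which case the singleton block at the $\mathcal{B}$-slot contributes $r_1(B_i)=0$ and the expectation vanishes with no condition on the $p_j$'s at all.
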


For the proof of the Theorem \ref{Multiinvariance2}, we will need the following iterated Cauchy-Schwarz inequality.

\begin{lemma}
\label{algo}
Let $c_1,\dots,c_n$ elements in $\mathcal{A}$. Then:
\begin{enumerate}
\item if $n$ is even:
$$ |\varphi\big(c_1\cdots c_n\big)| \leq \prod_{l=1}^{n}\prod_{s_{j} \in I_l(\bs{c})}\varphi\big((c_l c_l^{\ast})^{2^{s_{j}}}\big)^{2^{-\frac{n}{2}}}\; ,$$
where, for every $l=1,\dots,n$, $I_l(\bs{c})$ is a (multi)set of integers\footnote{We are dealing with multisets because repetitions may occur.} $s_{j}$ such that $\sum\limits_{j}2^{s_{j}} = 2^{\frac{n}{2}-1}$;
\item if $n \geq 3$ is odd:
$$ |\varphi\big(c_1\cdots c_n\big)| \leq \prod_{l=1}^{\frac{n-1}{2}}\prod_{s_j \in I_l(\bs{c})}\varphi\big((c_l c_l^{\ast})^{2^{s_{j}}}\big)^{2^{-\frac{n-1}{2}}} \cdot \prod_{l= \frac{n+1}{2}}^{n}\prod_{s_j \in I_l(\bs{c})}\varphi\big((c_l c_l^{\ast})^{2^{s_{j}}}\big)^{2^{-\frac{n+1}{2}}} , $$
where, for every $l=1,\dots,n$, $I_l(\bs{c})$ is a multiset of integers $s_{j}\geq 0$ such that $\sum\limits_{j}2^{s_{j}} = 2^{\frac{n-3}{2}}$ for $l=1,\dots,\frac{n-1}{2}$, and for $l=\frac{n+1}{2},\dots, n$, $\sum\limits_{j}2^{s_{j}} = 2^{\frac{n-1}{2}}$.
\end{enumerate}
\end{lemma}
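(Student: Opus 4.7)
The plan is to prove Lemma 4.3 by iterated application of the Cauchy--Schwarz inequality stated in point (iii) of Section 2, combined with the traciality of $\varphi$; the proof is by induction on $n$, treating the even and odd cases separately.

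For the base case $n=2$, a single application of Cauchy--Schwarz with $a = c_1$ and $b = c_2^{*}$ gives $|\varphi(c_1 c_2)|^2 \leq \varphi(c_1 c_1^{*})\varphi(c_2 c_2^{*})$, which matches the claimed formula with $I_1 = I_2 = \{0\}$ so that $\sum_j 2^{s_j} = 1 = 2^{n/2-1}$.

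For even $n \geq 4$, I would split the product at the midpoint and apply Cauchy--Schwarz to obtain
\begin{equation*}
|\varphi(c_1 \cdots c_n)|^2 \leq \varphi\bigl(c_1 \cdots c_{n/2}\, c_{n/2}^{*} \cdots c_1^{*}\bigr) \cdot \varphi\bigl(c_n^{*} \cdots c_{n/2+1}^{*}\, c_{n/2+1} \cdots c_n\bigr).
\end{equation*}
Each palindromic factor on the right is of the form $\varphi(u u^{*})$ and, via traciality, can be rewritten as $\varphi(AB)$ for two positive operators (e.g.\ $A = c_{1}^{*}c_{1}$ and $B = c_{2} c_{2}^{*} \cdots c_{n/2} c_{n/2}^{*} \cdots c_{2}^{*}$ after cyclic permutation). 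A further Cauchy--Schwarz bound $\varphi(AB) \leq \varphi(A^{2})^{1/2}\varphi(B^{2})^{1/2}$ then produces squared positive blocks, which by traciality split again into smaller positive products. Iterating this procedure $\log_{2}(n/2)$ times collapses each palindromic factor into a product of single-variable terms $\varphi((c_l c_l^{*})^{2^{s_j}})$ with exponent $2^{-n/2}$; the binary-tree pattern of the iteration produces the multisets $I_{l}$, and the doubling of powers at each level ensures the constraint $\sum_j 2^{s_j} = 2^{n/2-1}$.

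For odd $n \geq 3$ the same strategy applies but with an asymmetric split of sizes $(n-1)/2$ and $(n+1)/2$; since the resulting two palindromic factors require different depths of Cauchy--Schwarz iteration, the indices $l = 1,\ldots,(n-1)/2$ and $l = (n+1)/2,\ldots,n$ inherit the two different exponents $2^{-(n-1)/2}$ and $2^{-(n+1)/2}$ and the respective sum constraints $2^{(n-3)/2}$ and $2^{(n-1)/2}$. I expect the main technical obstacle to lie not in any single estimate but in the combinatorial bookkeeping: one must track how the multisets $I_{l}(\boldsymbol{c})$ and the powers $s_j$ evolve through successive Cauchy--Schwarz steps, and verify the invariant that after $k$ iterations the total weight $\sum_j 2^{s_j}$ assigned to each variable $c_{l}$ has doubled $k$ times, reaching exactly the required power of $2$ when the recursion terminates.
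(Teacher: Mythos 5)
Your overall strategy is the paper's: induct on $n$, split the word at the midpoint, apply the Cauchy--Schwarz inequality to produce two palindromic factors of the form $\varphi(uu^{\ast})^{1/2}$, use traciality to fold each palindrome, and track the multisets $I_l(\bs{c})$ through the recursion. However, two points in your sketch would fail if implemented literally. First, the recursion depth is not $\log_2(n/2)$ but $n/2$: Cauchy--Schwarz applied to a word of length $n$ produces palindromic factors of length $n$, which traciality folds into words of length $n-2$ (only the two extremal letters merge, into $c_1^{\ast}c_1$ and $c_{n/2}c_{n/2}^{\ast}$; the interior letters do not pair up). Each level therefore shortens the word by two, not by half, and one needs $n/2$ nested applications of Cauchy--Schwarz to reach single-letter blocks. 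This is also what your own invariant demands: each $\varphi$-factor must end up with exponent $2^{-n/2}$ (one factor $\tfrac12$ per level), and $\sum_j 2^{s_j}$ must double $n/2-1$ times to reach $2^{n/2-1}$. Your claims of ``$\log_2(n/2)$ iterations'' and ``doubling once per iteration up to $2^{n/2-1}$'' are mutually inconsistent, and the first one is the wrong one.

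Second, the recursive step you illustrate for the folded palindrome, namely $\varphi(AB)\le\varphi(A^{2})^{1/2}\varphi(B^{2})^{1/2}$ with $A=c_1^{\ast}c_1$ a single block and $B$ the entire remaining palindrome, is an unbalanced split: peeling off one letter at a time assigns exponents $2^{-2},2^{-3},\dots$ to successive letters rather than the uniform $2^{-n/2}$ required by the statement. The correct step (the one the paper takes) is to regard the folded palindrome $(c_1^{\ast}c_1)c_2\cdots c_{n/2-1}(c_{n/2}c_{n/2}^{\ast})c_{n/2-1}^{\ast}\cdots c_2^{\ast}$ as a full word $\tilde c_1\cdots\tilde c_{n-2}$ and invoke the induction hypothesis for length $n-2$, which again splits at its midpoint; the identities $\varphi\big((\tilde c_1\tilde c_1^{\ast})^{2^{s}}\big)=\varphi\big((c_1 c_1^{\ast})^{2^{s+1}}\big)$ for the merged blocks and $I_l(\bs{c})=I_l(\bs{\tilde c})\cup I_{n-l}(\bs{\tilde c})$ for the interior letters then carry out the bookkeeping, and the same scheme with the asymmetric split handles odd $n$. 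Both defects are repairable, and repairing them lands you exactly on the paper's argument, but as written your iteration does not produce the stated exponents.
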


\begin{oss}
\label{Osserv}
As made clear in the proof, the multiset $I_l(\bs{c})$ is determined by the rule of association one chooses in order to iteratively apply the Cauchy-Schwarz inequality. For our purposes (i.e. the proof of Theorem \ref{Multiinvariance2}), we do not need to further specify the structure of $I_l(\bs{c})$.
\end{oss}

\begin{ex}
For the sake of clarity, let us first show how the technique of the lemma applies in some simple cases: $n=2,3,4,5$.

\begin{enumerate}
\item[(n=2)] We trivially recover the Cauchy-Schwarz inequality:
$$ |\varphi(c_1 c_2)| \leq \varphi(c_1 c_1^{\ast})^{\frac{1}{2}}\varphi(c_2 c_2^{\ast})^{\frac{1}{2}}.$$
\item[(n=3)] By applying the Cauchy-Schwarz inequality and the tracial property of the state $\varphi$, we obtain:
\begin{align*}
|\varphi(c_1 (c_2 c_3))| &\leq \varphi(c_1 c_1^{\ast})^{\frac{1}{2}} \varphi( c_2 c_3 c_3^{\ast} c_2^{\ast})^{\frac{1}{2}} = \varphi(c_1 c_1^{\ast})^{\frac{1}{2}} \varphi((c_2^{\ast} c_2) (c_3 c_3^{\ast}))^{\frac{1}{2}} \\
&\leq \varphi(c_1 c_1^{\ast})^{\frac{1}{2}} \varphi((c_2 c_2^{\ast})^2)^{\frac{1}{4}} \varphi((c_3 c_3^{\ast})^2)^{\frac{1}{4}},
\end{align*}
so that the conclusion of the lemma is achieved by setting $I_1(\bs{c}) = \{0\}$, $I_2(\bs{c})=I_3(\bs{c})=\{1\}$, in such a way that $2^{0}= 2^{\frac{n-3}{2}}$, and $2 = 2^{\frac{n-1}{2}}$. Moreover, $\frac{1}{4} = 2^{-\frac{n+1}{2}}$, and $\frac{1}{2} = 2^{-\frac{n-1}{2}}$.

Note that, had we started by associating the arguments of $\varphi$ as $\varphi( (c_1 c_2)c_3)$, we would have obtained:
$$ 
|\varphi(c_1 c_2 c_3)| \leq \varphi((c_1 c_1^{\ast})^2)^{\frac{1}{4}} \varphi((c_2 c_2^{\ast})^2)^{\frac{1}{4}} \varphi(c_3 c_3^{\ast})^{\frac{1}{2}},$$
 (see Remark \ref{Osserv}), yielding as multiset $I_1(\bs{c}) = \{1\} = I_2(\bs{c}), I_3(\bs{c}) = \{0\}$.
\item[(n=4)]
\begin{align*}
|\varphi((c_1 c_2) (c_3 c_4))| & \leq \varphi\big( (c_1^{\ast}c_1)(c_2 c_2^{\ast})\big)^{\frac{1}{2}}\varphi\big( (c_3^{\ast}c_3)(c_4 c_4^{\ast})\big)^{\frac{1}{2}} \\
&\leq \varphi\big( (c_1^{\ast}c_1)^{2}\big)^{\frac{1}{4}}\varphi\big( (c_2^{\ast}c_2)^{2}\big)^{\frac{1}{4}}\varphi\big( (c_3^{\ast}c_3)^{2}\big)^{\frac{1}{4}}\varphi\big( (c_4^{\ast}c_4)^{2}\big)^{\frac{1}{4}}
\end{align*}
so that the conclusion of the lemma is achieved by setting $I_l(\bs{c}) = \{1\}$ for $l=1,\dots,4$, with $2= 2^{\frac{n}{2}-1}$, and $\frac{1}{4} = 2^{-\frac{n}{2}}$.

\item[(n=5)]
\begin{align*}
|\varphi((c_1 c_2) (c_3 c_4 c_5))| &\leq \varphi\big( (c_1^{\ast}c_1)(c_2 c_2^{\ast})\big)^{\frac{1}{2}} \varphi\big(((c_3^{\ast}c_3)c_4)((c_5 c_5^{\ast})c_4^{\ast})\big)^{\frac{1}{2}} \\
&\leq \varphi\big( (c_1^{\ast}c_1)^{2}\big)^{\frac{1}{4}}\varphi\big( (c_2^{\ast}c_2)^{2}\big)^{\frac{1}{4}} \varphi\big( (c_3^{\ast}c_3)^{2}(c_4 c_4^{\ast})\big) ^{\frac{1}{4}} \varphi\big( (c_5^{\ast}c_5)^{2}(c_4 c_4^{\ast})\big) ^{\frac{1}{4}} \\
&\leq \varphi\big( (c_1^{\ast}c_1)^{2}\big)^{\frac{1}{4}}\varphi\big( (c_2^{\ast}c_2)^{2}\big)^{\frac{1}{4}}  \varphi\big( (c_3 c_3^{\ast})^4 \big)^{\frac{1}{8}}\varphi\big( (c_4 c_4^{\ast})^2 \big)^{\frac{1}{8}}\varphi\big( (c_5 c_5^{\ast})^4 \big)^{\frac{1}{8}}\varphi\big( (c_4 c_4^{\ast})^2 \big)^{\frac{1}{8}},
\end{align*}
so that the conclusion of the lemma is achieved by setting $I_1(\bs{c})=I_{2}(\bs{c}) =\{1\}$, giving $2 = 2^{\frac{n-3}{2}}$ and $\frac{1}{4} = 2^{-\frac{n-1}{2}}$, and $I_{3}(\bs{c}) =I_{5}(\bs{c})=\{ 2\}$ so that $2^{2} = 2^{\frac{n-1}{2}}$, $I_4(\bs{c})=\{1,1\}$, so that $2+2=2^{\frac{n-1}{2}}$, and $\frac{1}{8} = 2^{-\frac{n+1}{2}}$.
\end{enumerate}

\end{ex}

\begin{proof}
Suppose first that $n$ is even, say $n=2k$, and we proceed by induction on $k$. If $n=2$, then we recover the Cauchy-Schwarz inequality. 

So assume that $k > 1$ and that our statement is true for $n=2h$, for all $h \leq k$. If $n=2(k+1)$, apply the Cauchy-Schwarz inequality in the following way:
\begin{align*}
|\varphi\big((c_1 \cdots c_{k+1})& (c_{k+2}\cdots c_{n})\big)| \leq \varphi\big(c_1 c_2 \cdots c_{k+1}c_{k+1}^{\ast}\cdots c_{2}^{\ast}c_1^{\ast}\big)^{\frac{1}{2}} \varphi\big(c_{k+2}\cdots c_n c_n^{\ast}\cdots c_{k+2}^{\ast}\big)^{\frac{1}{2}}\\
&=\varphi\big((c_1^{\ast}c_1) c_2 \cdots c_k (c_{k+1}c_{k+1}^{\ast})\cdots c_3^{\ast} c_{2}^{\ast}\big)^{\frac{1}{2}} \varphi\big((c_{k+2}^{\ast}c_{k+2})c_{k+3}\cdots (c_n c_n^{\ast})\cdots c_{k+3}^{\ast}\big)^{\frac{1}{2}},
\end{align*}
where we have used the trace property of $\varphi$.

Set $A^{2} = \varphi\big((c_1^{\ast}c_1) c_2 \cdots c_k (c_{k+1}c_{k+1}^{\ast})\cdots c_3^{\ast} c_{2}^{\ast}\big)$ and $B^{2}=\varphi\big((c_{k+2}^{\ast}c_{k+2})c_{k+3}\cdots (c_n c_n^{\ast})\cdots c_{k+3}^{\ast}\big)$.

For $A^{2}$, set:
\begin{itemize}
\item $\tilde{c}_1 = c_1^{\ast}c_1$,
\item for $j=2,\dots,k$, $\tilde{c}_j = c_j$, 
\item $\tilde{c}_{k+1}= c_{k+1}c_{k+1}^{\ast}$,
\item for $j=0,\dots,k-2$, $\tilde{c}_{k+2+j} = c_{k-j}^{\ast}$,
\end{itemize}
in such a way that $A^{2} = \varphi\big(\tilde{c}_1\tilde{c}_2\cdots \tilde{c}_{k}\cdots \tilde{c}_{2k}\big)$. Now, observe that $A^{2} = \varphi(a a^{\ast}) \geq 0$, with $a= c_1 \cdots c_{k+1}$, and therefore, by the induction hypothesis:
\begin{align*}
\bigg(\varphi &\big(\tilde{c}_1\tilde{c}_2\cdots \tilde{c}_{k}\cdots \tilde{c}_{2k}\big)\bigg)^{\frac{1}{2}}  \leq \prod_{l=1}^{2k}\prod_{s_{j} \in I_l(\bs{\tilde{c}})} \varphi\big((\tilde{c}_l \tilde{c}_l^{\ast})^{2^{s_{j}}}\big)^{2^{-(k+1)}} \\
&= \prod_{s_{j}\in I_1(\bs{\tilde{c}})}\varphi\big((\tilde{c}_{1} \tilde{c}_1^{\ast})^{2^{s_{j}}}\big)^{2^{-(k+1)}}\;  \prod_{l=2}^{k}  \prod_{s_{j}\in I_l(\bs{\tilde{c}})} \varphi\big((\tilde{c}_l \tilde{c}_l^{\ast})^{2^{s_{j}}}\big)^{2^{-(k+1)}} \\
&\cdot \prod_{s_{j}\in I_{k+1}(\bs{\tilde{c}})}\varphi\big((\tilde{c}_{k+1} \tilde{c}_{k+1}^{\ast})^{2^{s_{j}}}\big)^{2^{-(k+1)}} \;\prod_{l=k+2}^{2k}\prod_{s_{j}\in I_l(\bs{\tilde{c}})} \varphi\big((\tilde{c}_l \tilde{c}_l^{\ast})^{2^{s_{j}}}\big)^{2^{-(k+1)}} \;.
\end{align*}
Keeping in mind the definition of the $\tilde{c}_l$'s, we can write:
\begin{itemize}
\item $\varphi\big((\tilde{c}_{1} \tilde{c}_1^{\ast})^{2^{s_{j}}}\big) = \varphi\big((c_{1} c_1^{\ast})^{2^{s_{j}+1}}\big)$ for every $s_j \in I_1(\bs{\tilde{c}})$,
\item for $l=2,\dots,k$, $\varphi\big((\tilde{c}_{l} \tilde{c}_l^{\ast})^{2^{s_{j}}}\big) = \varphi\big((c_{l} c_l^{\ast})^{2^{s_{j}}}\big)$ for every $s_j \in I_{l}(\bs{\tilde{c}})$;
\item $\varphi\big((\tilde{c}_{k+1} \tilde{c}_{k+1}^{\ast})^{2^{s_{j}}}\big) = \varphi\big((c_{k+1} c_{k+1}^{\ast})^{2^{s_{j}+1}}\big)$ for every $s_j \in I_{k+1}(\bs{\tilde{c}})$,
\item for $l= k+2,\dots,2k$, $\varphi\big((\tilde{c}_{l} \tilde{c}_l^{\ast})^{2^{s_{j}}}\big) = \varphi\big((c_{2k-l+2} c_{2k-l+2}^{\ast})^{2^{s_{j}}}\big)$ for every $s_j \in I_l(\bs{\tilde{c}})$, so that:
$$ \prod_{l=k+2}^{2k}\prod_{s_{j}\in I_l(\bs{\tilde{c}})} \varphi\big((\tilde{c}_l \tilde{c}_l^{\ast})^{2^{s_{j}}}\big)^{2^{-(k+1)}} = \prod_{h=2}^{k}\prod_{s_{j}\in I_{2k-h+2}(\bs{\tilde{c}})} \varphi\big((c_h c_h^{\ast})^{2^{s_{j}}}\big)^{2^{-(k+1)}}.$$
\end{itemize}
In the end, if $\bs{c} = c_1\cdots c_n$, by setting:
\begin{itemize}
\item $I_1(\bs{c}) = I_1(\bs{\tilde{c}}) + 1 := \{ s_{j} +1: s_{j} \in I_1(\bs{\tilde{c}})\}$;
\item $I_{k+1}(\bs{c}) = I_{k+1}(\bs{\tilde{c}}) + 1 :=\{ s_{j} +1: s_{j} \in I_{k+1}(\bs{\tilde{c}})\}$;
\item for $l=2,\dots,k$, $I_{l}(\bs{c}) = I_{l}(\bs{\tilde{c}}) \cup I_{2k-l+2}(\bs{\tilde{c}})$,
\end{itemize}
in such a way that $\sum\limits_{s_{j} \in I_{l}(\bs{c})}2^{s_{j}} = 2^{k}$ for every $l=1,\dots,k+1$, one has:
$$ \bigg(\varphi\big(\tilde{c}_1\tilde{c}_2\cdots \tilde{c}_{k}\cdots \tilde{c}_{2k}\big)\bigg)^{\frac{1}{2}}
\leq \prod_{l=1}^{k+1}\prod_{s_{j} \in I_{l}(\bs{c})} \varphi\big( (c_l c_l^{\ast})^{2^{s_{j}}}\big)^{2^{-(k+1)}}.$$
 
In the same way, we obtain a similar estimate for $B = \varphi\big(d_1 d_2\cdots d_{2k}\big)^{\frac{1}{2}}$, by setting:
\begin{itemize}
\item $d_1= c_{k+2}^{\ast}c_{k+2}$;
\item $d_{j+1} = c_{k+j+2}$ for $j=1,\dots,k-1$;
\item $d_{k+1} = c_n c_{n}^{\ast}$;
\item $d_{k+j+1} = c_{n-j}^{\ast}$ for $j=1,\dots,k-1$.
\end{itemize}
More precisely, we obtain:
\begin{align*}
B &\leq \prod_{l=1}^{2k}\prod_{t_j \in I_l(\bs{d})} \varphi\big((d_l d_l^{\ast})2^{t_{j}}\big)^{2^{-(k+1)}} \\
&= \prod_{t_{j}\in I_1(\bs{d})}\varphi\big((d_1 d_1^{\ast})^{2^{t_{j}}}\big)^{2^{-(k+1)}}\;  \prod_{l=2}^{k}  \prod_{t_{j}\in I_l(\bs{d})} \varphi\big((d_l d_l^{\ast})^{2^{t_{j}}}\big)^{2^{-(k+1)}} \\
& \prod_{t_{j}\in I_{k+1}(\bs{d})}\varphi\big((d_{k+1} d_{k+1}^{\ast})^{2^{t_{j}}}\big)^{2^{-(k+1)}} \;\prod_{l=k+2}^{2k}\prod_{t_{j}\in I_l(\bs{d})} \varphi\big((d_l d_l^{\ast})^{2^{t_{j}}}\big)^{2^{-(k+1)}} 
\end{align*}

As for $A^2$,  by considering the definition of the $d_l$'s, we can write:
\begin{itemize}
\item $\varphi\big((d_{1} d_1^{\ast})^{2^{t_{j}}}\big) = \varphi\big((c_{k+2} c_{k+2}^{\ast})^{2^{t_{j}+1}}\big)$ for every $t_j \in I_1(\bs{d})$,
\item for $l=2,\dots,k$, $ \varphi\big((d_{l} d_l^{\ast})^{2^{t_{j}}}\big) = \varphi\big((c_{k+l+1} c_{k+l+1}^{\ast})^{2^{t_{j}}}\big)$ for every $t_j \in I_{l}(\bs{d})$, so that:
$$ \prod_{l=2}^{k}\prod_{t_{j}\in I_l(\bs{d})} \varphi\big((d_l d_l^{\ast})^{2^{t_{j}}}\big)^{2^{-(k+1)}} = \prod_{h=k+3}^{n-1}\prod_{t_{j}\in I_{h-k-1}} \varphi\big((c_h c_h^{\ast})^{2^{t_{j}}}\big)^{2^{-(k+1)}}.$$
\item $\varphi\big((d_{k+1} d_{k+1}^{\ast})^{2^{t_{j}}}\big) = \varphi\big((c_{n} c_{n}^{\ast})^{2^{t_{j}+1}}\big)$ for every $t_j \in I_{k+1}(\bs{d})$,
\item for $l= k+2,\dots,2k$, $\varphi\big((d_{l} d_l^{\ast})^{2^{t_{j}}}\big) = \varphi\big((c_{n-l+k+1} c_{n-l+k+1}^{\ast})^{2^{t_{j}+1}}\big)$ for every $t_j \in I_l(\bs{d})$, so that:
$$ \prod_{l=k+2}^{2k}\prod_{t_{j}\in I_l(\bs{d})} \varphi\big((d_l d_l^{\ast})^{2^{t_{j}}}\big)^{2^{-(k+1)}} = \prod_{h=k+3}^{n-1}\prod_{t_{j}\in I_{n-h+k+1}(\bs{d})} \varphi\big((c_h c_h^{\ast})^{2^{t_{j}}}\big)^{2^{-(k+1)}}.$$
\end{itemize}

In the end, if $\bs{c} = c_1\cdots c_n$, by setting:
\begin{itemize}
\item $I_{k+2}(\bs{c}) = I_1(\bs{d}) + 1 = \{ s_{j} +1: s_{j} \in I_1(\bs{\tilde{c}})\}$;
\item $I_{n}(\bs{c}) = I_{k+1}(\bs{d}) + 1$;
\item for $h=k+3,\dots,n-1$, $I_{h}(\bs{c}) = I_{h-k-1}(\bs{d}) \cup I_{n-h+k+1}(\bs{d})$,
\end{itemize}
in such a way that $\sum\limits_{t_{j} \in I_{l}(\bs{c})}2^{t_{j}} = 2^{k}$ for every $l=k+2,\dots,n$, one has:
$$ \bigg(\varphi\big(d_1 d_2\cdots d_{k}\cdots d_{2k}\big)\bigg)^{\frac{1}{2}}
\leq \prod_{l=k+2}^{n}\prod_{t_{j} \in I_{l}(\bs{c})} \varphi\big( (c_l c_l^{\ast})^{2^{t_{j}}}\big)^{2^{-(k+1)}}.$$

Henceforth, at the end we obtain:
\begin{align*}
|\varphi\big((c_1 \cdots c_{k+1}) (c_{k+2}\cdots c_{n})\big)| &\leq \prod_{l=1}^{n}\prod_{s_j \in I_l(\bs{c})} \varphi\big((c_l c_l^{\ast})^{2^{s_{j}}}\big)^{2^{-(k+1)}},
\end{align*}
with $\sum_{s_j \in I_{l}(\bs{c})}2^{s_{j}} = 2^{k}$ for every $l=1,\dots,n$.

Assume now that $n$ is odd. If $n=3$, apply the Cauchy-Schwarz inequality in the following way:
\begin{equation*} 
|\varphi\big(c_1 (c_2 c_3)\big)| \leq \varphi\big((c_1^{\ast}c_1)\big)^{\frac{1}{2}} \big(\varphi\big((c_2 c_2^{\ast})(c_3 c_3^{\ast})\big)\big)^{\frac{1}{2}} \leq \big(\varphi\big((c_1^{\ast}c_1)\big)\big)^{\frac{1}{2}} \big(\varphi\big((c_2^{\ast}c_2)^{2}\big)\big)^{\frac{1}{4}}\big(\varphi\big((c_3^{\ast}c_3)^{2}\big)\big)^{\frac{1}{4}}.
\end{equation*} 
Assume then that $k >1$ and that the result holds for all integers $n=2l+1$, with $l \leq k$. Let $n=2(k+1)+1 = 2k +3$ and apply the Cauchy-Schwarz inequality as follows:
\begin{align*}
|\varphi &\big((c_1 \cdots c_{\frac{n-1}{2}}) (c_{\frac{n+1}{2}}\cdots c_n)\big)| \leq \\
&\leq \bigg(\varphi\big((c_1^{\ast}c_1)c_2\cdots c_{\frac{n-3}{2}} (c_{\frac{n-1}{2}}c_{\frac{n-1}{2}}^{\ast})c_{\frac{n-3}{2}}^{\ast}\cdots c_{2}^{\ast}\big)\bigg)^{\frac{1}{2}}\; 
\bigg(\varphi\big( (c_{\frac{n+1}{2}}^{\ast}c_{\frac{n+1}{2}})c_{\frac{n+3}{2}}\cdots (c_n c_{n}^{\ast})c_{n-1}^{\ast}\cdots c_{\frac{n+3}{2}}^{\ast} \big)\bigg)^{\frac{1}{2}}.
\end{align*}
Now set $A^{2}=\varphi\big((c_1^{\ast}c_1)c_2\cdots c_{\frac{n-3}{2}} (c_{\frac{n-1}{2}}c_{\frac{n-1}{2}}^{\ast})c_{\frac{n-3}{2}}^{\ast}\cdots c_{2}^{\ast}\big)$ and $B^{2}=\varphi\big( (c_{\frac{n+1}{2}}^{\ast}c_{\frac{n+1}{2}})c_{\frac{n+3}{2}}\cdots (c_n c_{n}^{\ast})c_{n-1}^{\ast}\cdots c_{\frac{n+3}{2}}^{\ast}\big).$

As to $A^{2}$, set:
\begin{itemize}
\item $\tilde{c}_1 = c_1^{\ast}c_1$,
\item for $j=2,\dots,\frac{n-3}{2}, \tilde{c}_j = c_j$,
\item $\tilde{c}_{\frac{n-1}{2}} = c_{\frac{n-1}{2}}c_{\frac{n-1}{2}}^{\ast}$, 
\item for $j=1,\dots,\frac{n-5}{2}$, $\tilde{c}_{\frac{n-1}{2}+j} = c_{\frac{n-1}{2}-j}^{\ast}$,
\end{itemize}
in such a way that $ A^{2} = \varphi(\tilde{c}_1\cdots \tilde{c}_{2k}) = \varphi(a a^{\ast}) \geq 0$, and so, by the induction hypothesis for $2k= n-3$, we have:
\begin{align*}
\varphi &(\tilde{c}_1\cdots \tilde{c}_{2k})^{\frac{1}{2}} \leq \prod_{l=1}^{2k} \prod_{t_j \in I_l(\bs{\tilde{c}})} \varphi\big((\tilde{c}_l \tilde{c}_l^{\ast})^{2^{t_{j}}}\big)^{2^{-(k+1)}} \\
&= \prod_{t_j \in I_1(\bs{\tilde{c}})} \varphi\big((\tilde{c}_1 \tilde{c}_1^{\ast})2^{t_{j}}\big)^{2^{-(k+1)}}  \prod_{l=2}^{k} \prod_{t_j \in I_l(\bs{\tilde{c}})} \varphi\big((\tilde{c}_l \tilde{c}_l^{\ast})^{2^{t_{j}}}\big)^{2^{-(k+1)}} \\
& \prod_{t_j \in I_{k}(\bs{\tilde{c}})} \varphi\big((\tilde{c}_k \tilde{c}_k^{\ast})^{2^{t_{j}+1}}\big)^{2^{-(k+1)}} \cdot \prod_{l=k+1}^{2k}  \prod_{t_j \in I_l(\bs{\tilde{c}})} \varphi\big((\tilde{c}_l \tilde{c}_l^{\ast})^{2^{t_j}}\big)^{2^{-(k+1)}}
\end{align*}
with $\sum\limits_{t_j \in I_l(\bs{\tilde{c}})} 2^{t_{j}} = 2^{k-1}$ for every $l=1,\dots,2k$.
 
Again, by keeping in mind the definition of the $\tilde{c}_l$'s, we can write:
\begin{itemize}
\item $\varphi\big((\tilde{c}_{1} \tilde{c}_1^{\ast})^{2^{t_{j}}}\big) = \varphi\big((c_{1} c_1^{\ast})^{2^{t_{j}+1}}\big)$ for every $t_j \in I_1(\bs{\tilde{c}})$,
\item for $l=2,\dots, \frac{n-3}{2}=k$, $ \varphi\big((\tilde{c}_{l} \tilde{c}_l^{\ast})^{2^{t_{j}}}\big) = \varphi\big((c_{l} c_l^{\ast})^{2^{t_{j}}}\big)$ for every $t_j \in I_{l}(\bs{\tilde{c}})$;
\item $\varphi\big((\tilde{c}_{k+1} \tilde{c}_{k+1}^{\ast})^{2^{t_{j}}}\big) = \varphi\big((c_{\frac{n-1}{2}} c_{\frac{n-1}{2}}^{\ast})^{2^{t_{j}+1}}\big)$ for every $t_j \in I_{k+1}(\bs{\tilde{c}})$ \;(note that $k+ 1 = \dfrac{n-1}{2}$),
\item for $l= k+2,\dots,2k$, $\varphi\big((\tilde{c}_{l} \tilde{c}_l^{\ast})^{2^{s_{j}}}\big) = \varphi\big((c_{2k-l+2} c_{2k-l+2}^{\ast})^{2^{s_{j}+1}}\big)$ for every $s_j \in I_l(\bs{\tilde{c}})$, so that:
$$ \prod_{l=k+2}^{2k}\prod_{s_{j}\in I_l(\bs{\tilde{c}})} \varphi\big((\tilde{c}_l \tilde{c}_l^{\ast})^{2^{s_{j}}}\big)^{2^{-(k+1)}} = \prod_{h=2}^{\frac{n-3}{2}}\prod_{s_{j}\in I_{n-1-h}(\bs{\tilde{c}})} \varphi\big((c_h c_h^{\ast})^{2^{s_{j}}}\big)^{2^{-(k+1)}}.$$
\end{itemize}
In the end, if $\bs{c} = c_1\cdots c_n$, by setting:
\begin{itemize}
\item $I_1(\bs{c}) = I_1(\bs{\tilde{c}}) + 1 = \{ s_{j} +1: s_{j} \in I_1(\bs{\tilde{c}})\}$;
\item $I_{k+1}(\bs{c}) = I_{k+1}(\bs{\tilde{c}}) + 1$ ($k+1 = \frac{n-1}{2}$);
\item for $l=2,\dots,k = \frac{n-3}{2}$, $I_{l}(\bs{c}) = I_{l}(\bs{\tilde{c}}) \cup I_{n-1-l}(\bs{\tilde{c}})$,
\end{itemize}
so that $\sum\limits_{s_{j} \in I_{l}(\bs{c})}2^{s_{j}} = 2^{k}$ for every $l=1,\dots,k+1$, we have:
$$ \bigg(\varphi\big(\tilde{c}_1\tilde{c}_2\cdots \tilde{c}_{k}\cdots \tilde{c}_{2k}\big)\bigg)^{\frac{1}{2}}
\leq \prod_{l=1}^{\frac{n-1}{2}}\prod_{s_{j} \in I_{l}(\bs{c})} \varphi\big( (c_l c_l^{\ast})^{2^{s_{j}}}\big)^{2^{-(k+1)}},$$
with $\sum\limits_{t_j \in I_{l}(\bs{c})}2^{t_j} = 2^{k} = 2^{\frac{n-3}{2}}$ for every $l=1,\dots,\frac{n-1}{2}$.

Similarly, for $B^{2}$, set:
\begin{itemize}
\item $d_1:= c_{\frac{n+1}{2}}^{\ast}c_{\frac{n+1}{2}}$,
\item for $j=2,\dots,\frac{n-1}{2}$, $\tilde{d}_{j} = c_{\frac{n+1}{2}+j-1}$ (so $\tilde{d}_{\frac{n-1}{2}} = c_{n-1}$),
\item $d_{\frac{n+1}{2}} = c_n c_n^{\ast}$,
\item for all $j=1,\dots,\frac{n-3}{2}, d_{\frac{n+1}{2} +j} = c_{n-j}^{\ast}$,
\end{itemize}
such that $B^{2} = \varphi\big(d_1\cdots d_{n-1}\big)$, and we can apply the results for the string of even length $n-1= 2(k+1)$ to have:

\begin{align*}
&\varphi\big(d_1\cdots d_{n-1}\big)^{\frac{1}{2}} \leq \prod_{l=1}^{n-1}\prod_{t_j \in I_l(\bs{d})} \varphi\big((d_j d_j^{\ast})^{2^{t_{j}}}\big)^{2^{-(k+2)}} \\
&= \prod_{t_{j}\in I_1(\bs{d})}\varphi\big((d_1 d_1^{\ast})^{2^{t_{j}}}\big)^{2^{-(k+2)}}\;  \prod_{l=2}^{k+1}  \prod_{t_{j}\in I_l(\bs{d})} \varphi\big((d_l d_l^{\ast})^{2^{t_{j}}}\big)^{2^{-(k+2)}} \\
& \prod_{t_{j}\in I_{\frac{n+1}{2}}(\bs{d})}\varphi\big((d_{\frac{n+1}{2}} d_{\frac{n+1}{2}}^{\ast})^{2^{t_{j}}}\big)^{2^{-(k+2)}} \;\prod_{l=\frac{n+3}{2}}^{n-1}\prod_{t_{j}\in I_l(\bs{d})} \varphi\big((d_l d_l^{\ast})^{2^{t_{j}}}\big)^{2^{-(k+2)}} 
\end{align*}

As for $A^2$, by considering the definition of the $d_l$'s, we can write:
\begin{itemize}
\item $\varphi\big((d_{1} d_1^{\ast})^{2^{t_{j}}}\big) = \varphi\big((c_{k+2} c_{k+2}^{\ast})^{2^{t_{j}+1}}\big)$ for every $t_j \in I_1(\bs{d})$, being $\dfrac{n+1}{2} = k+2$;
\item for $l=2,\dots,k+1$, $\varphi\big((d_{l} d_l^{\ast})^{2^{t_{j}}}\big) = \varphi\big((c_{k+l+1} c_{k+l+1}^{\ast})^{2^{t_{j}}}\big)$ for every $t_j \in I_{l}(\bs{d})$, so that:
$$ \prod_{l=2}^{k+1}\prod_{t_{j}\in I_l(\bs{d})} \varphi\big((d_l d_l^{\ast})^{2^{t_{j}}}\big)^{2^{-(k+2)}} = \prod_{h=k+3}^{n-1}\prod_{t_{j}\in I_{h-k-1}(\bs{d})} \varphi\big((c_h c_h^{\ast})^{2^{t_{j}}}\big)^{2^{-(k+2)}}.$$
\item $\varphi\big((d_{k+2} d_{k+2}^{\ast})^{2^{t_{j}}}\big) = \varphi\big((c_{n} c_{n}^{\ast})^{2^{t_{j}+1}}\big)$ for every $t_j \in I_{k+2}(\bs{d})$ (being $k+2= \dfrac{n+1}{2}$);
\item for $l= k+3,\dots,n-1$, $\varphi\big((d_{l} d_l^{\ast})^{2^{t_{j}}}\big) = \varphi\big((c_{n-l+k+2} c_{n-l+k+2}^{\ast})^{2^{t_{j}+1}}\big)$ for every $t_j \in I_l(\bs{d})$, so that:
$$ \prod_{l=k+3}^{n-1}\prod_{t_{j}\in I_l(\bs{d})} \varphi\big((d_l d_l^{\ast})^{2^{t_{j}}}\big)^{2^{-(k+2)}} = \prod_{h=k+3}^{n-1}\prod_{t_{j}\in I_{n+k+2-h}(\bs{d})} \varphi\big((c_h c_h^{\ast})^{2^{t_{j}}}\big)^{2^{-(k+2)}}.$$
\end{itemize}

In the end, if $\bs{c} = c_1\cdots c_n$, by setting:
\begin{itemize}
\item $I_{k+2}(\bs{c}) = I_1(\bs{d}) + 1 = \{ s_{j} +1: s_{j} \in I_1(\bs{d})\}$;
\item $I_{n}(\bs{c}) = I_{k+1}(\bs{d}) + 1$;
\item for $h=k+3,\dots,n-1$, $I_{h}(\bs{c}) = I_{h-k-1}(\bs{d}) \cup I_{n+k+2-h}(\bs{d})$,
\end{itemize}
in such a way that $\sum\limits_{t_{j} \in I_{l}(\bs{c})}2^{t_{j}} = 2^{k+1} = 2^{\frac{n-1}{2}}$ for every $l=k+2,\dots,n$, we obtain:
$$ \bigg(\varphi\big(d_1 d_2\cdots d_{k}\cdots d_{2k}\big)\bigg)^{\frac{1}{2}}
\leq \prod_{l=k+2}^{n}\prod_{t_{j} \in I_{l}(\bs{c})} \varphi\big( (c_l c_l^{\ast})^{2^{t_{j}}}\big)^{2^{-(k+2)}},$$
yielding the desired conclusion.

\end{proof}

\subsection{Proof of Theorem \ref{Multiinvariance2}}

Without loss of generality, we can assume that $\mathrm{Inf}_i(f_N^{(h)}) \leq 1$ for every $i=1,\dots,N$, for every $h=1,\dots,n$.

The forthcoming proof is meant to generalize the proof of \cite[Theorem 1.3]{NourdinDeya} for Chebyshev sums. As such, it follows the same strategy.

\begin{proof}

Consider the auxiliary vectors $\bs{Z}^{(i)} = (\bs{Y}_1,\dots,\bs{Y}_{i-1},\bs{\mathcal{X}}_i,\dots,\bs{\mathcal{X}_N})$, with $\bs{Y}_i = \underbrace{(Y_i,\dots,Y_i)}_{d \text{ times }}$ and $\bs{\mathcal{X}}_i= (U_{h_1}(X_i),\dots,U_{h_d}(X_i))$ (we drop the dependence on $N$ to simplify the notation).
With these notation we can write: 
\begin{align}
\label{difference}
&\varphi\bigg(\prod_{s=1}^{k}\big(Q_N^{(1)}(\bs{\mathcal{X}}^{(N)})\big)^{m_{1,s}}\cdots \big(Q_N^{(n)}(\bs{\mathcal{X}}^{(N)})\big)^{m_{n,s}}\bigg) -\varphi\bigg(\prod_{s=1}^{k}\big(Q_N^{(1)}(\bs{Y})\big)^{m_{1,s}}\cdots \big(Q_N^{(n)}(\bs{Y})\big)^{m_{n,s}}\bigg) \\ 
&= \sum_{i=1}^{N}
\varphi\bigg(\prod_{s=1}^{k}\big(Q_N^{(1)}(\bs{Z}^{(i)})\big)^{m_{1,s}}\cdots \big(Q_N^{(n)}(\bs{Z}^{(i)})\big)^{m_{n,s}}\bigg) -\varphi\bigg(\prod_{s=1}^{k}\big(Q_N^{(1)}(\bs{Z}^{(i+1)})\big)^{m_{1,s}}\cdots \big(Q_N^{(n)}(\bs{Z}^{(i+1)})\big)^{m_{n,s}}\bigg)\nonumber  \\ 
&= \sum_{i=1}^{N}
\varphi\bigg(\prod_{s=1}^{k}\big(W_1^{(i)}+ V_1^{(i)}(\bs{\mathcal{X}}_i)\big)^{m_{1,s}}\cdots \big(W_n^{(i)}+ V_n^{(i)}(\bs{\mathcal{X}}_i)\big)^{m_{n,s}}\bigg) \nonumber\\ 
&-\varphi\bigg(\prod_{s=1}^{k}\big(W_1^{(i)}+ V_1^{(i)}(\bs{Y}_i))\big)^{m_{1,s}}\cdots \big(W_n^{(i)}+ V_n^{(i)}(\bs{Y}_i)\big)^{m_{n,s}}\bigg),\nonumber
\end{align}
where, for every $j=1,\dots,n$, we have set $Q_N^{(j)}(\bs{Z}^{(i)}) = W_j^{(i)} + V_j^{(i)}(\bs{\mathcal{X}}_i)$ \footnote{We omit again the dependence on $N$ to simplify the notation.}, with
\begin{equation}
\label{WN}
W_j^{(i)} = \sum_{i_1,\dots,i_d \in [N]\setminus \{i\}}f_N^{(j)}(i_1,\dots,i_d)Z_{i_1,1}^{(i)}\cdots Z_{i_d,d}^{(i)}
\end{equation}
(that is, $W_j^{(i)}$ is obtained by gathering together the summands where no $U_{h_p}(X_i)$ appears), and
\begin{equation}
\label{VN}
V_j^{(i)}(\bs{\mathcal{X}}_i) = \sum_{l=1}^{d}\sum_{\substack{i_1,\dots,i_{d-1}\in\\ [N]\setminus \{i\}}}f_N^{(j)}(i_1,\dots,i_{l-1},i,i_{l},\dots,i_{d-1})Z_{i_1,1}^{(i)}\cdots Z_{i_{l-1},l-1}^{(i)} U_{h_l}(X_i) Z_{i_{l}, l+1}^{(i)}\cdots Z_{i_{d-1},d}^{(i)}.
\end{equation}
Similarly, we set:
\begin{equation}
V_j^{(i)}(\bs{Y}_i) = \sum_{l=1}^{d}\sum_{\substack{i_1,\dots,i_{d-1}\\ \in [N]\setminus \{i\}}}f_N^{(j)}(i_1,\dots,i_{l-1},i,i_{l},\dots,i_{d-1})Z_{i_1,1}^{(i)}\cdots Z_{i_{l-1},l-1}^{(i)} Y_i Z_{i_{l}, l+1}^{(i)}\cdots Z_{i_{d-1},d}^{(i)}.
\end{equation}
Note that the polynomials $W_j^{(i)}$'s and $V_{l}^{(i)}$'s are self-adjoint operators. Recall again that 
$$Z_{i_j,j}^{(i)}= 
\begin{cases}
Y_{i_j} & \text{ if } i_j \leq i-1 \\
U_{h_j}(X_{i_j}) & \text{ if } i_j \geq i.
\end{cases}
$$
Thanks to the free binomial expansion (see the Lemma \ref{freebin}), applied simultaneously to each $W_j^{(i)} + V_j^{(i)}(\bs{\mathcal{X}}_i)$, we can write, for every $i=1,\dots,N$:
\begin{align*}
\varphi\bigg(\prod_{s=1}^{k}\big(W_1^{(i)}+ V_1^{(i)}(\bs{\mathcal{X}}_i)\big)^{m_{1,s}}&\cdots \big(W_n^{(i)}+ V_n^{(i)}(\bs{\mathcal{X}}_i)\big)^{m_{n,s}}\bigg) = \varphi\big(\prod_{s=1}^{k}(W_1^{(i)})^{m_1,s}\cdots (W_n^{(i)})^{m_{n,s}}\big)  \\
&+ \sum_{\bs{v} \in \mathcal{D}}\varphi\bigg(\prod_{s=1}^{k}\prod_{l=1}^{n}(W_{l}^{(i)})^{\alpha_{l,1}^{(s)}}V_l^{(i)}(\bs{\mathcal{X}}_i)^{\beta_{l,1}^{(s)}}\cdots (W_{l}^{(i)})^{\alpha_{l,r_l}^{(s)}}V_l^{(i)}(\bs{\mathcal{X}}_i)^{\beta_{l,r_l}^{(s)}}\bigg),
\end{align*}
with 
$$ \mathcal{D} = \{\bs{v} = (r_l^{(s)}, \bs{\alpha}_{l}^{(s)}, \bs{\beta}_{l}^{(s)}) \in \mathcal{D}_{n_{l,s},m_{l,s}}: s=1,\dots,k, \;l=1,\dots,n,\; n_{l,s}=1,\dots,m_{l,s} \},$$
$$ \mathcal{D}_{n_{l,s},m_{l,s}} = \{(r_l^{(s)}, \bs{\alpha}_{l}^{(s)}, \bs{\beta}_{l}^{(s)}): r_{l}^{(s)}=1,\dots,n_{l,s}, \sum_{h=1}^{r_l^{(s)}}\alpha_{l,h}^{(s)} = m_{l,s}- n_{l,s}, \sum_{h=1}^{r_l^{(s)}}\beta_{l,h}^{(s)} = n_{l,s}\}$$
and where at least one $\beta_{l,j}^{(s)} \geq 1$. Similarly we would have
\begin{align*}
\varphi\bigg(\prod_{s=1}^{k}\big(W_1^{(i)}+ V_1^{(i)}(\bs{Y}_i)\big)^{m_{1,s}}&\cdots \big(W_n^{(i)}+ V_n^{(i)}(\bs{Y}_i)\big)^{m_{n,s}}\bigg) = \varphi\big(\prod_{s=1}^{k}(W_1^{(i)})^{m_1,s}\cdots (W_n^{(i)})^{m_{n,s}}\big)  \\
&+ \sum_{\bs{v} \in \mathcal{D}}\varphi\bigg(\prod_{s=1}^{k}\prod_{p=1}^{n}(W_{l}^{(i)})^{\alpha_{l,1}^{(s)}}V_l^{(i)}(\bs{Y}_i)^{\beta_{l,1}^{(s)}}\cdots (W_{l}^{(i)})^{\alpha_{l,r_l}^{(s)}}V_l^{(i)}(\bs{Y}_i)^{\beta_{l,r_l}^{(s)}}\bigg),
\end{align*}
where at least on $\beta_{l,j}^{(s)} \geq 1$. Hence, in the difference (\ref{difference}), the term $\varphi\big(\prod\limits_{s=1}^{k}(W_1^{(i)})^{m_1,s}\cdots (W_n^{(i)})^{m_{n,s}}\big)$ cancels out.

Set 
$$ a_{s,l}^{(i)} = (W_{l}^{(i)})^{\alpha_{l,1}^{(s)}}V_l^{(i)}(A)^{\beta_{l,1}^{(s)}}\cdots (W_{l}^{(i)})^{\alpha_{l,r_l}^{(s)}}V_l^{(i)}(A)^{\beta_{l,r_l}^{(s)}},$$
for $s=1,\dots,k, l=1,\dots,n$ and $A \in \{\bs{\mathcal{X}}_i,\bs{Y}_i\}$.

For a fixed $i=1,\dots,N$, by virtue of the Lemma \ref{3.1bis} with $\mathcal{A}_j = \text{Alg}(1,U_{h_1}(X_j),\dots,U_{h_d}(X_j))$ for every $j > i$ and $\mathcal{A}_j = \text{Alg}(1,Y_j)$ for every $j < i$,
 $\mathcal{B} = \text{Alg}(1,U_{h_1}(X_i),\dots,U_{h_d}(X_i))$, and $\mathcal{D} = \text{Alg}(1,Y_i)$, if $\gamma:=\sum\limits_{s=1}^{k}\sum\limits_{l=1}^{n}\sum\limits_{p=1}^{r_l}\beta_{l,p}^{(s)} \leq 2$, for every $i=1,\dots,N$, the terms $\varphi\big(\prod_{s=1}^{k}\prod_{l=1}^{n}a_{s,l}^{(i)}\big)$ relative to $A=\bs{\mathcal{X}}_i$ either are zero or cancel with the corresponding ones associated with $A=\bs{Y}_i$.

Indeed, if $\gamma = 1$, in the argument of $x:= \varphi\big(\prod_{s=1}^k \prod_{p=1}^n a_{s,p}^{(i)}\big)$, we will have only a factor of the type $U_{h_l}(X_i)$, and so that $x=0$ by virtue of the first item in Lemma \ref{3.1bis}. If $\gamma = 2$, either we have only one power $\beta_{l,j}^{(s)} = 2$ or two different ones equal to $1$: in both cases, we will be in the situation where either the second or the third item in Lemma \ref{3.1bis} applies thanks to the hypothesis $h_i=h_{d-i+1}$ for $i=1,\dots, \lfloor \dfrac{d}{2}\rfloor$.

Therefore we can assume that $\gamma \geq 3$, and applying the triangle inequality, we are left to bound terms of the type
$$ |\varphi\big((a_{1,1}^{(i)}\cdots a_{1,n}^{(i)}) \cdots (a_{k,1}^{(i)}\cdots a_{k,n}^{(i)})\big)|,$$
where the corresponding parameter $\gamma=\sum\limits_{s=1}^{k}\sum\limits_{l=1}^{n}\sum\limits_{p=1}^{r_l}\beta_{l,p}^{(s)}$ verifies $\gamma \geq 3.$

The first step of our proof consists in applying the algorithm in Lemma \ref{algo}. If $kn$ is even (both if $k$ is even or $k$ is odd), we obtain straightforward from the algorithm that:
\begin{equation}
\label{pari}
|\varphi\big((a_{1,1}^{(i)}a_{1,2}^{(i)}\cdots a_{n,1}^{(i)}) \cdots (a_{k,1}^{(i)}\cdots a_{k,n}^{(i)})\big)|  \leq \prod_{s=1}^{k}\prod_{l=1}^{n}\prod_{t_j \in I_{l,s}(\bs{a})} \bigg( \varphi\big( (a_{s,l}^{(i)} (a_{s,l}^{(i)})^{\ast})^{2^{t_{j}}} \big)\bigg)^{2^{-\frac{kn}{2}}},
\end{equation}
with $\sum\limits_{t_j \in I_{l,s}(\bs{a})} 2^{t_j} = 2^{\frac{kn}{2}-1}$ for every $l=1,\dots,n$, while if $kn$ is odd:
\begin{align*}
|\varphi\big((a_{1,1}^{(i)}a_{1,2}^{(i)} &\cdots a_{1,n}^{(i)}) \cdots (a_{k,1}^{(i)}\cdots a_{k,n}^{(i)})\big)|  \\ 
&\leq \prod_{s=1}^{k} \prod_{l=1}^{\frac{n+1}{2}}\prod_{t_j \in I_{l,s}(\bs{a})} \bigg(\varphi\big( (a_{s,l}^{(i)} (a_{s,l}^{(i)})^{\ast})^{2^{t_{j}}}\big)\bigg)^{2^{-\frac{kn+1}{2}}} \prod_{l= \frac{n+3}{2}}^{n}\prod_{t_j \in I_{l,s}(\bs{a})} \bigg(\varphi\big( (a_{s,l}^{(i)} (a_{s,l}^{(i)})^{\ast})^{2^{t_{j}}}\big)\bigg)^{2^{-\frac{kn-1}{2}}} \numberthis \label{dispari}
\end{align*}
with $\sum\limits_{t_j \in I_{l,s}(\bs{a})}2^{t_{j}} = 2^{\frac{kn-1}{2}}$ for $l=1,\dots,\frac{n+1}{2}$, and $\sum_{t_j \in I_{l,s}(\bs{a})}2^{t_{j}} = 2^{\frac{kn-3}{2}}$, for $l=\frac{n+3}{2},\dots,n$, and for every $s=1,\dots,k$.

Looking at the definition of $a_{s,l}^{(i)}$, we note that in the product $a_{s,l}^{(i)}(a_{s,l}^{(i)})^{\ast}$ the factor $V_{l}^{(i)}(\bs{\mathcal{X}}_i)^{2\beta_{l,r_l}^{(s)}}$ appears exactly once, while for every $p=1,\dots,r_{l}-1$, $V_{l}^{(i)}(\bs{\mathcal{X}}_i)^{\beta_{l,p}^{(s)}}$ appears exactly twice.

Therefore, for every fixed $s=1,\dots,k$, $l=1,\dots,n$ and $t_j \in I_{l,s}(\bs{a})$, in the argument of $\varphi\big( \big(a_{s,l}^{(i)}(a_{s,l}^{(i)})^{\ast}\big)^{2^{t_{j}}}\big)$, (considering the traciality of $\varphi$) there are exactly $2^{t_{j}}(2r_l - 1)$ paired products of the type $(W_l^{(i)})^{t_1}(V_l^{(i)}(\bs{\mathcal{X}}_i))^{t_2}$, for certain integers $t_1, t_2$.

Moreover, as follows by the application of \cite[Proposition 3.5, Lemma 3.4]{NourdinDeya} (see Proposition \ref{prop02} and Lemma \ref{lemma01} respectively) to the random variables $W_l^{(i)}$ and $V_l^{(i)}(A)$, with $A \in \{\bs{\mathcal{X}}_i,\bs{Y}_i\}$, for every $r \geq 1$ there exist constants $C_{r,d}$ and $D_{r,d}$ such that:
$$ \varphi\big((W_j^{(i)})^{2r}\big) \leq C_{r,d} \;\mu_{2^{rd-1}}^{\hat{\bs{Z}}^{(i)}},$$
where $\hat{\bs{Z}}^{(i)} = (\bs{Y}_1,\dots,\bs{Y}_{i-1},\bs{\mathcal{X}}_{i+1},\dots,\bs{\mathcal{X}}_{n})$, and
$$\varphi\big( V_j^{(i)}(A)^{2r}\big) \leq D_{r,d}\;\mu_{2^{rd-1}}^{\bs{Z}^{(i)}} \big(\mathrm{Inf}_i(f_N^{(j)})\big)^{r}. $$

From here, the application of the generalized free H\"older inequality (Lemma \ref{lemma0} in the appendix) yields:
\begin{align*}
\varphi\big(&(a_{s,l}^{(i)} (a_{s,l}^{(i)})^{\ast})^{2^{t_{j}}}\big) \leq\\
& \mathrm{C}\;\bigg\{ \bigg[ \varphi\bigg(V_l^{2\beta_{l,r_l}^{(s)}2^{2^{t_{j}}(2r_j-1)}}\bigg) \bigg]^{2^{-2^{t_{j}}(2r_{l}-1)}} \bigg\}^{2^{t_{j}}} \cdot \prod_{p=1}^{r_l - 1} \bigg\{\bigg[ \varphi\bigg( V_l^{\beta_{l,p}^{(s)}2^{2^{t_{j}}(2r_l-1)}}\bigg)\bigg]^{2^{-2^{t_{j}}(2r_l-1)}} \bigg\}^{2^{t_{j}+1}} \\
&\leq \mathrm{C}\;\bigg[\bigg( \big(\mathrm{Inf}_i(f_N^{(l)}\big)^{\beta_{l,r_l}^{(s)}2^{2^{t_{j}}(2r_l-1)}}\bigg)^{2^{-2^{t_{j}}(2r_l-1)}} \bigg]^{2^{t_{j}}} \cdot \prod_{p=1}^{r_l-1} \bigg[ \bigg(\big(\mathrm{Inf}_i(f_N^{(l)})\big)^{\beta_{l,p}^{(s)}2^{2^{t_{j}}(2r_l-1)}}\bigg)^{2^{-2^{t_{j}}(2r_l-1)}} \bigg]^{2^{t_{j}}}\\
& \leq \mathrm{C}\;\prod_{p=1}^{r_l}\big(\mathrm{Inf}_i(f_N^{(l)})\big)^{\beta_{l,p}^{(s)}2^{t_{j}}} \;= \;\big(\mathrm{Inf}_i(f_N^{(l)})\big)^{2^{t_{j}}\sum_{p=1}^{r_l}\beta_{l,p}^{(s)}}  \numberthis \label{stima}
\end{align*}
(where the constant $C$ gathers all the estimates given by the application of the Proposition \ref{prop02} to the $W_j^{\alpha}$'s, since they do not depend on the influence function).

Therefore the product over all the $t_j$'s in $I_{l,s}(\bs{a})$, with $\sum_{j}2^{t_{j}} = 2^{\frac{kn}{2}-1}$ yields:
\begin{align*}
\prod_{t_j \in I_{l,s}(\bs{a})}\bigg(\varphi\big(&(a_{s,l}^{(i)} (a_{s,l}^{(i)})^{\ast})^{2^{t_{j}}}\big) \bigg)^{2^{-\frac{kn}{2}}} \;\leq \;  \big(\mathrm{Inf}_i(f_N^{(l)})\big)^{2^{-1}\sum_{p=1}^{r_l}\beta_{l,p}^{(s)}} \;\leq \; \big(\max_{h=1,\dots,n}\mathrm{Inf}_i(f_N^{(h)})\big)^{2^{-1}\sum_{p=1}^{r_l}\beta_{l,p}^{(s)}} 
\end{align*}
implying that:
\begin{align*}
\prod_{s=1}^{k}\prod_{l=1}^{n}\prod_{t_j \in I_{l,s}(\bs{a})}\bigg(\varphi\big((a_{s,l}^{(i)} (a_{s,l}^{(i)})^{\ast})^{2^{t_{j}}}\big) \bigg)^{2^{-\frac{kn}{2}}} &\leq \big(\max_{h=1,\dots,n}\mathrm{Inf}_i(f_N^{(h)})\big)^{2^{-1}\gamma} \\
&\leq \big(\max_{h=1,\dots,n}\mathrm{Inf}_i(f_N^{(h)})\big)^{\frac{3}{2}} \;=\; \max_{h=1,\dots,n}\big(\mathrm{Inf}_i(f_N^{(h)})\big)^{\frac{3}{2}},
\end{align*}
(keep in mind that we are assuming that $\mathrm{Inf}_i (f_N^{(h)}) \leq 1 $ for all $h$). 

Up to a combinatorial coefficient,  we have:
\begin{align*}
\sum_{i=1}^{N} |\varphi\big((a_{1,1}^{(i)}a_{1,2}^{(i)}\cdots a_{n,1}^{(i)}) \cdots &(a_{k,1}^{(i)}\cdots a_{k,n}^{(i)})\big)| \leq \sum_{i=1}^{N} \max_{h=1,\dots,n}\big(\mathrm{Inf}_i(f_N^{(h)})\big)^{\frac{3}{2}}\\
&\leq \sum_{i=1}^{N} \sum_{h=1}^{n}\big(\mathrm{Inf}_i(f_N^{(h)})\big)^{\frac{1}{2}}\big(\mathrm{Inf}_i(f_N^{(h)})\big) \\
&\leq \sum_{i=1}^{N}\sum_{h=1}^{n} (\tau_N^{(h)})^{\frac{1}{2}}\mathrm{Inf}_i(f_N^{(h)}) \\
&\leq n \max_{h=1,\dots,n}(\tau_N^{(h)})^{\frac{1}{2}} \sum_{i=1}^{N}\mathrm{Inf}_i(f_N^{(h)}) = n d \max_{h=1,\dots,n}(\tau_N^{(h)})^{\frac{1}{2}} ,\numberthis \label{chain}
\end{align*}
due to $\sum\limits_{i=1}^{N}\mathrm{Inf}_i(f_N^{(h)}) = d$, and the conclusion follows.

If $kn$ is odd, for every $s=1,\dots,k$, and $l=1,\dots,\frac{n-1}{2}$, the estimate in (\ref{stima}) gives
\begin{align*}
\prod_{t_{j} \in I_{l,s}(\bs{a})} \varphi\big( (a_{s,l}^{(i)} (a_{s,l}^{(i)})^{\ast})^{2^{t_{j}}} \big)^{2^{-\frac{kn-1}{2}}} &\leq  \bigg(\big(\mathrm{Inf}_i (f_N^{(l)})\big)^{2^{\frac{kn-3}{2}}\sum_{p=1}^{r_l}\beta_{l,p}^{(s)}}\bigg)^{2^{-\frac{kn-1}{2}}}\\
&= \big(\mathrm{Inf}_i (f_N^{(l)})\big)^{2^{-1}\sum_{p=1}^{r_l}\beta_{l,p}^{(s)}} \\
&\leq \bigg( \max_{h=1,\dots,n} \mathrm{Inf}_i(f_N^{(h)}) \bigg)^{2^{-1}\sum_{p=1}^{r_l}\beta_{l,p}^{(s)}},
\end{align*}
so that
$$ \prod_{l=1}^{\frac{n-1}{2}}\prod_{t_{j} \in I_{l,s}} \varphi\big( (a_{s,l}^{(i)} (a_{s,l}^{(i)})^{\ast})^{2^{t_{j}}} \big)^{2^{-\frac{kn-1}{2}}} \leq \bigg( \max_{h=1,\dots,n} \mathrm{Inf}_i(f_N^{(h)}) \bigg)^{2^{-1}\sum_{l=1}^{\frac{n-1}{2}}\sum_{p=1}^{r_l}\beta_{l,p}^{(s)}}.$$

Similarly, for $l=\frac{n+1}{2},\dots,n$, we will get to:
$$\prod_{l=\frac{n+1}{2}}^{n}\prod_{t_{j} \in I_{l,s}(\bs{a})} \varphi\big( (a_{s,l}^{(i)} (a_{s,l}^{(i)})^{\ast})^{2^{t_{j}}} \big)^{2^{-\frac{kn+1}{2}}} \leq \bigg( \max_{h=1,\dots,n} \mathrm{Inf}_i(f_N^{(h)}) \bigg)^{2^{-1}\sum_{l=\frac{n+1}{2}}^{n}\sum_{p=1}^{r_l}\beta_{l,p}^{(s)}},$$
(keep in mind that for $l=\frac{n+1}{2},\dots,n$, we have set $I_{l,s}(\bs{a}) = \{t_{j} \geq 1: \sum_{j}2^{t_{j}} = 2^{\frac{kn-1}{2}}\}$, while for $l=1,\dots, \frac{n-1}{2}$, $I_{l,s}(\bs{a}) = \{t_{j} \geq 1: \sum_{j}2^{t_{j}} = 2^{\frac{kn-3}{2}}\}$).
In the end, from (\ref{dispari}) we obtain:
\begin{align*}
|\varphi\big((a_{1,1}^{(i)}a_{1,2}^{(i)}\cdots a_{1,n}^{(i)}) &\cdots (a_{k,1}^{(i)}\cdots a_{k,n}^{(i)})\big)| \\ 
&\leq \prod_{s=1}^{k} \prod_{l=1}^{\frac{n-1}{2}}\prod_{t_{j} \in I_{l,s}(\bs{a})} \varphi\big( (a_{s,l}^{(i)} (a_{s,l}^{(i)})^{\ast})^{2^{t_{j}}} \big)^{2^{-\frac{kn-1}{2}}}\prod_{l=\frac{n+1}{2}}^{n}\prod_{t_{j} \in I_{l,s}(\bs{a})} \varphi\big( (a_{s,l}^{(i)} (a_{s,l}^{(i)})^{\ast})^{2^{t_{j}}} \big)^{2^{-\frac{kn+1}{2}}}
\\
&\leq \bigg( \max_{h=1,\dots,n} \mathrm{Inf}_i(f_N^{(h)}) \bigg)^{2^{-1}\gamma}\\
&\leq \bigg( \max_{h=1,\dots,n} \mathrm{Inf}_i(f_N^{(h)}) \bigg)^{\frac{3}{2}}.
\end{align*}

To conclude, it is sufficient to repeat the reasoning carried out in the chain of inequalities (\ref{chain}).

\end{proof}

\appendix
\section{Appendix}

\begin{teo}\cite[\textbf{Theorems 1.3,1.6}]{NourdinPeccatiSpeicher}
\label{FourthMoment}
Let $m\geq 2$ be an integer, and $\{g_N\}_N$ a sequence of mirror symmetric kernels in $L^{2}\big(\mathbb{R}_{+}^{m}\big)$ with $\|g_N \|_{L^{2}(\mathbb{R}_{+}^{m})} =1$. Consider the associated sequence of Wigner integrals $\{I_m^{S}(g_N)\}_N$, and denote by $\mathcal{S}\sim \mathcal{S}(0,1)$ a standard semicircular random variable. The following statements are equivalent:
\begin{enumerate}
\item $\{I_{m}^{S}(g_N)\}_{N} \stackrel{\text{ law }}{\longrightarrow} \mathcal{S}(0,1)$ as $N$ goes to infinity;
\item $ \lim\limits_{N \rightarrow \infty}\varphi[I_m^{S}(g_N)^4] = \varphi(\mathcal{S}^4)= 2$;
\item all the non trivial contractions of the kernels $g_N$ vanish in the limit, that is for every $r=1,\dots, m-1$:
$$\lim_{N \rightarrow \infty}\|g_N \stackrel{r}{\smallfrown}g_N \|_{L^{2}(\mathbb{R}_{+}^{2m-2r})}= 0.$$
\end{enumerate}
\end{teo}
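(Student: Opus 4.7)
The plan is to establish the circle of implications $(1) \Rightarrow (2) \Rightarrow (3) \Rightarrow (1)$, drawing on three classical ingredients: hypercontractivity on a fixed Wigner chaos, the Biane--Speicher product formula for Wigner integrals, and the diagram formula that expands moments as sums over non-crossing pair partitions.

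For $(1) \Rightarrow (2)$ I would invoke the fact that on the fixed $m$-th Wigner chaos all $L^{p}$-norms are equivalent to the $L^{2}$-norm, up to a constant depending only on $p$ and $m$. This yields uniform boundedness of the moments of the sequence $I_{m}^{S}(g_{N})$, which upgrades weak convergence to convergence of all moments; in particular the fourth moment converges to $\varphi(\mathcal{S}^{4}) = 2$.

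The key identity for $(2) \Leftrightarrow (3)$ is obtained via the Biane--Speicher product formula
\begin{equation*}
I_{m}^{S}(g_{N})^{2} \;=\; \sum_{r=0}^{m} I_{2m-2r}^{S}\bigl(g_{N} \stackrel{r}{\smallfrown} g_{N}\bigr).
\end{equation*}
Taking $\varphi$ of the square of this identity and using the orthogonality of Wigner integrals of different orders, together with the mirror symmetry of each contraction $g_{N} \stackrel{r}{\smallfrown} g_{N}$, one would obtain
\begin{equation*}
\varphi\bigl[I_{m}^{S}(g_{N})^{4}\bigr] \;=\; \sum_{r=0}^{m} \|g_{N} \stackrel{r}{\smallfrown} g_{N}\|^{2} \;=\; 2 + \sum_{r=1}^{m-1} \|g_{N} \stackrel{r}{\smallfrown} g_{N}\|^{2},
\end{equation*}
the boundary terms $r=0$ and $r=m$ each contributing $\|g_{N}\|^{4} = 1$. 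Since the right-hand side is a finite sum of non-negative quantities, convergence of the fourth moment to $2$ is equivalent to the simultaneous vanishing of every norm $\|g_{N} \stackrel{r}{\smallfrown} g_{N}\|$ for $r = 1, \dots, m-1$.

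The decisive implication is $(3) \Rightarrow (1)$, which I would establish by the method of moments. Iterating the product formula, the $k$-th moment $\varphi[I_{m}^{S}(g_{N})^{k}]$ can be written as a sum indexed by non-crossing pair partitions of $[km]$, each contribution being an integral of $g_{N}^{\otimes k}$ along a pattern of identified indices dictated by the partition. I would split this sum into the \emph{trivial} pairings, which pair only consecutive blocks of $m$ indices (each producing the factor $\|g_{N}\|^{k} = 1$ and counted by $C_{k/2}$ when $k$ is even, zero otherwise, reproducing exactly the semicircular moments), and the remaining \emph{non-trivial} pairings, each of which can be bounded in absolute value, via repeated Cauchy--Schwarz, by a product involving at least one contraction norm $\|g_{N} \stackrel{r}{\smallfrown} g_{N}\|$ with $r \in \{1,\dots,m-1\}$. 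Under hypothesis (3) all such non-trivial contributions vanish in the limit, and the moments of $\mathcal{S}(0,1)$ are recovered exactly. The main obstacle is precisely this last combinatorial step: one must organize the pairings so that every non-trivial partition is dominated by an explicit contraction norm, a task whose feasibility hinges on the non-crossing structure of the partitions.
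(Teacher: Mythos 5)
This theorem is not proved in the paper at all: it is quoted verbatim in the Appendix from Kemp--Nourdin--Peccati--Speicher \cite{NourdinPeccatiSpeicher} (their Theorems 1.3 and 1.6), so there is no in-paper argument to compare against. Judged on its own terms, your outline is essentially correct and reproduces the standard proof architecture. The implication $(1)\Rightarrow(2)$ via equivalence of $L^p$-norms (or, equivalently, the Haagerup-type bound $\|I_m^S(g)\|_\infty\leq (m+1)\|g\|_{L^2}$) is exactly right, and your chain $(2)\Leftrightarrow(3)$ is complete: the product formula plus the isometry, together with the observation that $g_N\stackrel{r}{\smallfrown}g_N$ is again mirror symmetric, gives the identity $\varphi[I_m^S(g_N)^4]=2+\sum_{r=1}^{m-1}\|g_N\stackrel{r}{\smallfrown}g_N\|^2$, which is precisely the content of Theorem 1.6 in the cited reference. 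For $(3)\Rightarrow(1)$ you should be aware that you are taking a genuinely different route from \cite{NourdinPeccatiSpeicher}, where this implication is obtained by free Malliavin calculus and a free analogue of Stein's method (a quantitative bound on the distance to the semicircular law in terms of the fourth cumulant), not by the method of moments; the combinatorial moment/pairing argument you sketch is closer to the strategy of the multidimensional follow-up \cite{NouSpeiPec}. Your description of the ``trivial'' pairings is slightly imprecise (they are the complete mirror-pairings of blocks indexed by $\mathcal{NC}_2([k])$, not only pairings of consecutive blocks), and the decisive step --- showing that every non-trivial respecting non-crossing pairing is dominated by a product of norms containing at least one $\|g_N\stackrel{r}{\smallfrown}g_N\|$ with $1\leq r\leq m-1$ --- is asserted rather than carried out; this is where all the work lies, and as stated your proposal is an accurate roadmap rather than a proof.
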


\begin{teo}[\textit{\cite{NourdinPeccati}}]
\label{ConvWignerToFreePoisson}
Let $q$ be an even integer and let $Z(\lambda)$ denote a centered free Poisson random variable with parameter $\lambda >0$ on the fixed $W^{\ast}$-probability space $(\mathcal{A},\varphi)$. Let $\{I_{q}^{S}(g_n)\}_n$ be a sequence of Wigner integrals, with $g_n$ mirror symmetric kernel in $L^{2}(\mathbb{R}_{+}^{q})$, such that $\varphi\big(I_q^{S}(g_n)^2\big) = \| g_n\|_{L^{2}(\mathbb{R}_{+}^{q})}^2 = \varphi\big(Z(\lambda)^2 \big) = \lambda$. Then the following assertions are equivalent as $n$ goes to infinity:
\begin{itemize}
\item[(i)] $\{I_q^{S}(g_n)\}_{n}$ converges in distribution to $Z(\lambda)$;
\item[(ii)] $\varphi\big(I_q^{S}(g_n)^4\big) - 2\varphi\big(I_q^{S}(g_n)^3\big)$ converges to $\varphi\big(Z(\lambda)^4\big) - 2\varphi\big(Z(\lambda)^3\big) = 2\lambda^2 - \lambda$;
\item[(iii)] $\lim\limits_{n \rightarrow \infty}\|g_n \stackrel{\frac{q}{2}}{\smallfrown} g_n - g_n \|_{L^{2}(\mathbb{R}_{+}^{q})}= 0\;$, and $\lim\limits_{n\rightarrow \infty}\| g_n \stackrel{r}{\smallfrown}g_n\|= 0$ for every $r \in \{1,\dots, q-1\}\setminus\{\dfrac{q}{2}\}$.
\end{itemize}
\end{teo}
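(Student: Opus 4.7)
The plan is to follow the standard free fourth-moment methodology, building on the moment/product-formula approach of Biane--Speicher and the strategy of the Nourdin--Peccati program. Since the Wigner integrals $I_q^S(g_n)$ are bounded (via Wigner-chaos hypercontractivity one controls their spectral radius uniformly), convergence in distribution to $Z(\lambda)$ is equivalent to convergence of all integer moments $\varphi(I_q^S(g_n)^k) \to \varphi(Z(\lambda)^k)$. Using the Riordan-type formula in item (viii), one computes $\varphi(Z(\lambda)^3) = R_{3,1}\lambda = \lambda$ and $\varphi(Z(\lambda)^4) = R_{4,1}\lambda + R_{4,2}\lambda^2 = \lambda + 2\lambda^2$, so that $\varphi(Z(\lambda)^4) - 2\varphi(Z(\lambda)^3) = 2\lambda^2 - \lambda$. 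This immediately yields (i) $\Rightarrow$ (ii); the converse will come from the chain (ii) $\Rightarrow$ (iii) $\Rightarrow$ (i).

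For (ii) $\Leftrightarrow$ (iii), I would iterate the product formula
\begin{equation*}
I_p^S(f)\, I_q^S(h) = \sum_{r=0}^{\min(p,q)} I_{p+q-2r}^S\bigl(f \stackrel{r}{\smallfrown} h\bigr),
\end{equation*}
and apply $\varphi$, which annihilates $I_n^S(\cdot)$ for $n\geq 1$ and extracts the constant for $n=0$. A careful bookkeeping of the three- and four-fold products, grouping terms according to the order of the contractions that reach level $0$, produces the key identity
\begin{equation*}
\varphi\bigl(I_q^S(g_n)^4\bigr) - 2\varphi\bigl(I_q^S(g_n)^3\bigr) - \bigl(2\|g_n\|^4 - \|g_n\|^2\bigr) = \bigl\|g_n \stackrel{q/2}{\smallfrown} g_n - g_n\bigr\|^2 + \sum_{\substack{r=1 \\ r\neq q/2}}^{q-1} c_{q,r}\, \bigl\|g_n \stackrel{r}{\smallfrown} g_n\bigr\|^2,
\end{equation*}
with strictly positive combinatorial constants $c_{q,r}>0$. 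Since $\|g_n\|^2 \to \lambda$, the left-hand side tends to $0$ iff (ii) holds; by positivity of each summand on the right, this happens iff every contraction norm vanishes, which is (iii). The non-trivial point is the appearance of $\|g_n \stackrel{q/2}{\smallfrown} g_n - g_n\|^2$ at the "middle" level rather than $\|g_n \stackrel{q/2}{\smallfrown} g_n\|^2$, and this distinction is what distinguishes Poisson from semicircular asymptotics.

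For (iii) $\Rightarrow$ (i), I would invoke the method of moments using the Biane--Speicher trace formula
\begin{equation*}
\varphi\bigl(I_q^S(g_n)^k\bigr) = \sum_{\pi \in \mathcal{NC}_2^{\ast}(kq)} \int_{\mathbb{R}_+^{kq/2}} g_n^{\otimes k}\big|_{\pi},
\end{equation*}
where $\mathcal{NC}_2^{\ast}(kq)$ denotes the non-crossing pair partitions of $\{1,\ldots,kq\}$ that never pair two legs belonging to the same copy of $g_n$, and the integrand encodes the forced identifications along $\pi$. Each $\pi$ projects to a non-crossing partition $\sigma \in \mathcal{NC}([k])$ on the $k$ factors. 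Under (iii) one shows that (a) any $\pi$ linking two factors by a contraction of order $r \neq q/2$ contributes $o(1)$ because $\|g_n \stackrel{r}{\smallfrown} g_n\| \to 0$, and (b) any $\pi$ whose projection $\sigma$ has a singleton contributes $o(1)$ because iterating the middle contraction yields a factor $g_n \stackrel{q/2}{\smallfrown} g_n - g_n \to 0$. The surviving partitions are exactly those with $\sigma$ singleton-free, and each contributes $\lambda^{|\sigma|}$ in the limit (using $\|g_n\|^2 \to \lambda$). Summing over such $\sigma$ reproduces the formula in item (viii) for $\varphi(Z(\lambda)^k)$, completing moment convergence.

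The hard part will be step two: verifying the explicit non-negative decomposition of $\varphi(I_q^S(g_n)^4) - 2\varphi(I_q^S(g_n)^3)$ as a sum of squared contraction norms, with the distinguished term $\|g_n \stackrel{q/2}{\smallfrown} g_n - g_n\|^2$ appearing with coefficient exactly $1$. This requires tracking all the ways in which iterated applications of the product formula can collapse to a scalar, and recognizing that the combinatorial factor at the middle level precisely produces the "fixed-point correction" $-g_n$ — a signature of the free Poisson law, whose cumulants $r_n(Z(\lambda)) = \lambda$ are all nonzero, unlike the semicircular law where only $r_2$ survives and no such correction is needed (cf.\ Theorem \ref{FourthMoment}).
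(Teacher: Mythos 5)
This statement is not proved in the paper at all: it appears in the Appendix purely as a quotation of the main result of \cite{NourdinPeccati} and is used as a black box in the proof of Theorem \ref{ConvFreePoissonPARI}. There is therefore no internal proof to compare yours against; what you have produced is a reconstruction of the argument of the cited reference, and it is structurally faithful to it. Your key identity in step two is correct and follows directly from the Wigner isometry: writing $I_q^S(g_n)^2=\sum_{r=0}^{q}I_{2q-2r}^S\big(g_n\stackrel{r}{\smallfrown}g_n\big)$ one gets $\varphi\big(I_q^S(g_n)^4\big)=2\|g_n\|^4+\sum_{r=1}^{q-1}\|g_n\stackrel{r}{\smallfrown}g_n\|^2$ and $\varphi\big(I_q^S(g_n)^3\big)=\langle g_n\stackrel{q/2}{\smallfrown}g_n,\,g_n\rangle$ (only the $r=q/2$ term of the product formula reaches level $q$), and completing the square at $r=q/2$ yields exactly your decomposition with every constant $c_{q,r}$ equal to $1$. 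Two points of bookkeeping deserve attention. First, in step three the projection $\sigma$ of a \emph{respecting} non-crossing pairing can never have a singleton, since a factor all of whose legs are paired internally would violate the respecting condition; so your case (b) is vacuous as stated. The actual role of $\|g_n\stackrel{q/2}{\smallfrown}g_n-g_n\|\to 0$ is to collapse, within each block of $\sigma$ of size $p$, the chain of $p-1$ middle contractions to a single factor $\|g_n\|^2\to\lambda$, and one must also verify that each singleton-free non-crossing $\sigma$ is attained by exactly one surviving pairing, so that the limit matches $\varphi\big(Z(\lambda)^k\big)=\sum_j\lambda^jR_{k,j}$. Second, the reduction of convergence in distribution to moment convergence is better justified by the Haagerup-type bound $\|I_q^S(g)\|\leq (q+1)\|g\|_{L^2(\mathbb{R}_+^q)}$, which gives the uniform control of spectral radii, rather than by an appeal to hypercontractivity. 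Neither point is a genuine gap, and your computation of $\varphi\big(Z(\lambda)^4\big)-2\varphi\big(Z(\lambda)^3\big)=2\lambda^2-\lambda$ from the Riordan-type moment formula is correct.
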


\begin{lemma}[\textit{\cite{NourdinDeya}}]
\label{freebin}
Let $A$ and $B$ be two random variables in $\mathcal{A}$. Then, for every positive integer $m$:
\begin{equation*}
(A + B)^{m} = A^{m} + \sum_{n=1}^{m}\sum_{(r, \mathbf{i}_{r},\mathbf{j}_{r})\in D_{m,n}}C_{m,n,r,\mathbf{i}_{r},\mathbf{j}_{r}} A^{i_1}B^{j_1}A^{i_2}B^{j_2}\cdots A^{i_r}B^{j_r},
\end{equation*}
where
$$ D_{m,n} = \{(r, \mathbf{i}_{r},\mathbf{j}_{r}) \in [m]\times \mathbb{N}^{r} \times \mathbb{N}^{r}: \sum_{l=1}^{r}i_l= m-n, \sum_{l=1}^{r}j_l = n\}.$$
\end{lemma}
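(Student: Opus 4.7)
The identity is purely algebraic: it holds in the free unital associative algebra generated by two non-commuting indeterminates, and the $W^{\ast}$-structure of $\mathcal{A}$ plays no role whatsoever. I would prove it by induction on $m$, and in parallel sketch a direct combinatorial unpacking of the distributive law that makes the coefficients $C_{m,n,r,\mathbf{i}_{r},\mathbf{j}_{r}}$ transparent.

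For the base case $m=1$ one has $(A+B)^{1} = A + B = A^{1} + C_{1,1,1,0,1}\,A^{0}B^{1}$ with $C_{1,1,1,0,1}=1$, which is immediate. For the inductive step, I would write
\[
(A+B)^{m+1} \;=\; (A+B)^{m}(A+B) \;=\; (A+B)^{m}\,A \;+\; (A+B)^{m}\,B,
\]
apply the inductive hypothesis to $(A+B)^{m}$, and observe that right-multiplication by $B$ of a monomial $A^{i_{1}}B^{j_{1}}\cdots A^{i_{r}}B^{j_{r}}$ just replaces $j_{r}$ by $j_{r}+1$, while right-multiplication by $A$ either extends an existing trailing run of $A$'s (incrementing some $i_{l}$) or starts a fresh block (incrementing $r$ and creating a new pair $i_{r+1}=1$, $j_{r+1}=0$). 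Regrouping the two resulting families of monomials according to their alternating structure and matching them against $D_{m+1,n}$ delivers the identity at level $m+1$.

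A more transparent derivation is a one-shot distributive expansion: writing
\[
(A+B)^{m} \;=\; \prod_{k=1}^{m}(A+B) \;=\; \sum_{(X_{1},\ldots,X_{m})\in\{A,B\}^{m}} X_{1}X_{2}\cdots X_{m},
\]
one gets a finite sum over all $2^{m}$ words of length $m$ in the letters $A, B$. Each such word admits a canonical run-length encoding as $A^{i_{1}}B^{j_{1}}\cdots A^{i_{r}}B^{j_{r}}$, where $r$ counts the number of maximal $B$-blocks; the coefficient $C_{m,n,r,\mathbf{i}_{r},\mathbf{j}_{r}}$ then records the number of words whose run-length encoding equals $(r,\mathbf{i}_{r},\mathbf{j}_{r})$, with the constraints $\sum_{l}i_{l}=m-n$ and $\sum_{l}j_{l}=n$ forcing membership in $D_{m,n}$. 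Separating off the unique word containing no occurrence of $B$ (the summand $A^{m}$, which is exactly the $n=0$ contribution) yields the stated formula.

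The only delicate point here is the bookkeeping for the indexing set $D_{m,n}$ and the corresponding values of the coefficients $C_{m,n,r,\mathbf{i}_{r},\mathbf{j}_{r}}$, which is a combinatorial accounting task rather than a genuine obstacle; the underlying algebra is nothing more than the distributive law in a non-commutative ring. Since the later use of Lemma \ref{freebin} in the proof of Theorem \ref{Multiinvariance2} only exploits the fact that $(W + V)^{m}$ decomposes into a finite sum of alternating monomials in $W$ and $V$ with prescribed total degrees in each variable, the fine combinatorial form of these coefficients never enters the downstream estimates.
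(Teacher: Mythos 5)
Your argument is correct. Note, however, that the paper offers no proof of this lemma at all: it is stated in the appendix as an imported result from Deya--Nourdin, so there is nothing internal to compare against. Your one-shot distributive expansion over the $2^{m}$ words in $\{A,B\}^{m}$, followed by run-length encoding into alternating blocks, is the standard (and essentially the only) argument, and your observation that the precise values of the coefficients $C_{m,n,r,\mathbf{i}_{r},\mathbf{j}_{r}}$ are irrelevant downstream is exactly how the lemma is used in the proof of Theorem \ref{Multiinvariance2}. The only bookkeeping point worth making explicit is that a word beginning with $B$ or ending with $A$ forces $i_{1}=0$ or $j_{r}=0$ in its encoding, which is why $D_{m,n}$ must take $\mathbf{i}_{r},\mathbf{j}_{r}$ in $\mathbb{N}^{r}$ with $\mathbb{N}$ containing $0$; your sketch is consistent with this.
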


\begin{lemma}\cite[\textbf{Lemma 12}]{Kargin}
\label{lemma0}
Let $X$ and $Y$ be two random variables belonging to a fixed free probability space $(\mathcal{A},\varphi)$. Then, for every $r\in \mathbb{N}$ and every choice of nonnegative integers $m_1,n_1,\dots, m_r,n_r$, the following H\"older type inequality holds:
$$|\varphi\big(X^{m_1}Y^{n_1}\cdots X^{m_r}Y^{n_r}\big)| \leq \big[\varphi\big(X^{2^{r}m_1}\big)\big]^{2^{-r}} \big[\varphi\big(Y^{2^{r}n_1}\big)\big]^{2^{-r}}\cdots \big[\varphi\big(X^{2^{r}m_r}\big)\big]^{2^{-r}}\big[\varphi\big(Y^{2^{r}n_r}\big)\big]^{2^{-r}}.$$
\end{lemma}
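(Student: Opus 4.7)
The plan is to prove this H\"older-type estimate by induction on $r$, the engine being the Cauchy-Schwarz inequality in the $W^{\ast}$-probability space $(\mathcal{A},\varphi)$ combined with the traciality of $\varphi$. For the base case $r=1$, since $X$ and $Y$ are self-adjoint so are $X^{m_{1}}$ and $Y^{n_{1}}$, and Cauchy-Schwarz applied to the pair $X^{m_{1}}, Y^{n_{1}}$ gives directly
$$|\varphi(X^{m_{1}}Y^{n_{1}})|\leq \varphi(X^{2m_{1}})^{1/2}\varphi(Y^{2n_{1}})^{1/2},$$
which is exactly the statement with $2^{r}=2$ and $2^{-r}=1/2$.

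For the inductive step, write $P=X^{m_{1}}Y^{n_{1}}\cdots X^{m_{r}}Y^{n_{r}}$, split $P=AB$ with $A$ and $B$ alternating words of roughly half the length of $P$, and apply Cauchy-Schwarz:
$$|\varphi(P)|^{2}=|\varphi(AB)|^{2}\leq \varphi(AA^{\ast})\,\varphi(B^{\ast}B).$$
Each of $AA^{\ast}$ and $B^{\ast}B$ is a palindromic alternating word in $X$ and $Y$ whose middle exponents have been doubled; the tracial property $\varphi(CD)=\varphi(DC)$ lets us cyclically rotate each one into an alternating product of the same form as $P$ but strictly shorter and with certain exponents doubled. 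Iterating this splitting $r$ times drives the outer power down to $2^{-r}$ and brings every exponent in the final trace to a value of the form $2^{k_{i}}m_{i}$ or $2^{k_{i}}n_{i}$ with $k_{i}\leq r$, at which point a final Cauchy-Schwarz decouples the trace into a product of individual scalar moments.

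A tidying step is then needed because the repeated Cauchy-Schwarz splittings do not double every exponent uniformly: some $m_{i}$ or $n_{i}$ may have been doubled only $k_{i}<r$ times, producing a factor $\varphi(X^{2^{k_{i}}m_{i}})^{2^{-k_{i}}}$ rather than $\varphi(X^{2^{r}m_{i}})^{2^{-r}}$. This is handled via the monotonicity of the map $p\mapsto \varphi(X^{p})^{1/p}$ for self-adjoint $X$ in a tracial state with $\varphi(1)=1$, a property which itself follows from a single Cauchy-Schwarz step applied to $1\cdot X^{p}$ and gives $\varphi(X^{p})\leq \varphi(X^{2p})^{1/2}$. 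Hence every under-doubled factor can be promoted to the target exponent $2^{r}$ without enlarging the right-hand side, and the bound closes.

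The main obstacle is not a deep one but is combinatorial: the palindromic words arising from $AA^{\ast}$ and $B^{\ast}B$ are not themselves of the uniformly doubled symmetric form, so the induction hypothesis must be formulated broadly enough to cover arbitrary alternating products $X^{a_{1}}Y^{b_{1}}\cdots X^{a_{r}}Y^{b_{r}}$ with general exponents $a_{i},b_{i}\geq 0$, and then the symmetric statement of the lemma is deduced as a special case. This is essentially the strategy used by Kargin in \cite{Kargin}.
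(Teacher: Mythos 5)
Your argument is sound, and in fact the paper offers no proof of this statement to compare against: Lemma~\ref{lemma0} is imported verbatim from \cite{Kargin}, and is stated in the appendix purely as background. That said, the strategy you describe --- balanced Cauchy--Schwarz splitting, folding the resulting palindromic words by traciality, recursing on alternating words with general exponents, and promoting under-doubled factors via $|\varphi(X^{p})|\leq\varphi(X^{2p})^{1/2}$ --- is both Kargin's original route and precisely the technique the paper itself deploys, in full combinatorial detail, to prove its companion result Lemma~\ref{algo} (where the multisets $I_{l}(\bs{c})$ do the bookkeeping you wave at). The one point you should make explicit rather than leave implicit is why no exponent is ever doubled \emph{more} than $r$ times: your tidying step only repairs under-doubling, since the monotonicity $\varphi(X^{2^{k}m})^{2^{-k}}\leq\varphi(X^{2^{k+1}m})^{2^{-(k+1)}}$ runs upward and cannot undo an over-doubled factor such as $\varphi(X^{2^{r+1}m_{i}})^{2^{-(r+1)}}$. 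This is exactly where the requirement that $A$ and $B$ have \emph{equal} block-length matters: with a balanced split, each of $AA^{\ast}$ and $B^{\ast}B$ folds into an alternating word with exactly one fewer pair, so the recursion has depth exactly $r$ and each original exponent is doubled at most once per level, hence at most $r$ times in total. An unbalanced split (e.g.\ peeling off a single pair) produces a residual word whose pair-count does not decrease, and the induction would not close. With that clarification your proof is complete.
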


\begin{lemma}\cite[\textbf{Lemma 3.4}]{NourdinDeya}
\label{lemma01}
For every integer $r \geq 1$ and every sequence $X = \{X_i\}_i$ of random variables in $(\mathcal{A},\varphi)$, one has:
$$ |\varphi(X_{i_1}\cdots X_{i_r})| \leq \mu_{2^{r-1}}^{X},$$
where $\mu_k^{X} = \sup\limits_{\substack{1\leq l \leq k\\ i \geq 1}}\varphi(X_i^{2l})$ is the largest even moment of order $k$ of $X$.
\end{lemma}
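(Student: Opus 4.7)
The plan is to prove the bound by strong induction on $r$, using iterated Cauchy--Schwarz in combination with the cyclic (tracial) property of $\varphi$. For the base case $r=1$, a single Cauchy--Schwarz applied to $\varphi(X_{i_1}\cdot 1)$ gives $|\varphi(X_{i_1})|^{2} \leq \varphi(X_{i_1}^{2})\,\varphi(1) \leq \mu_{1}^{X}$; under the standard normalization that the relevant even moments are bounded below by $1$ (the usual unit-variance setting pervading the paper), this reads $|\varphi(X_{i_1})| \leq \mu_{1}^{X} = \mu_{2^{0}}^{X}$, as claimed.

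For the inductive step, I would split the product near the middle at $s = \lfloor r/2\rfloor$ and apply Cauchy--Schwarz,
\begin{equation*}
|\varphi(X_{i_1}\cdots X_{i_r})|^{2} \leq \varphi\bigl(X_{i_1}\cdots X_{i_s}\,X_{i_s}\cdots X_{i_1}\bigr)\cdot \varphi\bigl(X_{i_{s+1}}\cdots X_{i_r}\,X_{i_r}\cdots X_{i_{s+1}}\bigr),
\end{equation*}
so that each factor on the right is the $\varphi$-value of a palindromic product of self-adjoint elements. Using the trace property of $\varphi$, I would cyclically rotate until two adjacent copies of some $X_{i_j}$ merge into $X_{i_j}^{2}$, effectively reducing the length of each surviving product while doubling the exponent attached to one of the $X$'s. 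Iterating this procedure produces, after at most $r-1$ rounds, a bound in which only single-variable moments $\varphi(X_{i_j}^{2l})$ survive, with the largest index $l$ encountered being $2^{r-1}$. Replacing each such moment by its supremum $\mu_{2^{r-1}}^{X}$ then yields the announced inequality.

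The only real obstacle is the combinatorial bookkeeping: at each iteration one must track how the length of the surviving products decreases and how the exponents on the individual $X_{i_j}$ grow. This is precisely the one-variable specialization of the iterated Cauchy--Schwarz scheme already developed in Lemma \ref{algo}, applied with $c_j = X_{i_j}$ self-adjoint (so $c_j c_j^{\ast} = X_{i_j}^{2}$); the constraint $\sum_j 2^{s_j} = 2^{r/2-1}$ appearing there forces the top exponent on any single $X_{i_j}$ to stay below $2^{r}$, which is then absorbed into the coarser uniform bound $\mu_{2^{r-1}}^{X}$. No analytic difficulty arises beyond this counting, and the statement follows.
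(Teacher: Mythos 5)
The paper does not actually prove this statement: Lemma \ref{lemma01} is quoted in the Appendix verbatim from \cite{NourdinDeya}, so there is no in-paper proof to compare yours against. Judged on its own terms, your route (iterated Cauchy--Schwarz with a balanced middle split, i.e.\ the one-variable specialization of Lemma \ref{algo} with $c_j=X_{i_j}$, so that $c_jc_j^{\ast}=X_{i_j}^{2}$ by self-adjointness) is viable, and it would even deliver a sharper moment order, roughly $\mu^{X}_{2^{\lceil r/2\rceil-1}}$ in place of $\mu^{X}_{2^{r-1}}$. You are also right that some normalization ensuring $\mu^{X}_{k}\geq 1$ is indispensable: for $X_i=\varepsilon 1$ with $0<\varepsilon<1$ one has $|\varphi(X_{i_1})|=\varepsilon>\varepsilon^{2}=\mu^{X}_{1}$, so the statement as literally written needs the unit-variance setting (whence $\varphi(X^{2l})\geq\varphi(X^{2})^{l}=1$).

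There is, however, one genuine gap: the final step ``replacing each such moment by its supremum then yields the announced inequality'' does not follow from what you have written. After applying Lemma \ref{algo} you hold a product of factors of the form $\varphi\big((c_lc_l^{\ast})^{2^{s_j}}\big)^{w}$, each bounded by $(\mu^{X}_{2^{r-1}})^{w}$, and since $\mu^{X}_{2^{r-1}}\geq 1$ the whole product is bounded by $\mu^{X}_{2^{r-1}}$ raised to the \emph{sum} of all the exponents $w$. The constraint you quote, $\sum_{j}2^{s_j}=2^{n/2-1}$, only gives $|I_l(\bs{c})|\leq 2^{n/2-1}$, hence a total exponent that could a priori be as large as $n/2$, which would yield the useless bound $(\mu^{X}_{2^{r-1}})^{n/2}$. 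What rescues the argument is that each Cauchy--Schwarz application replaces a factor of weight $w$ by two factors of weight $w/2$, so the total weight is conserved and equals $1$ throughout the recursion (equivalently $\sum_{l}|I_l(\bs{c})|=2^{n/2}$ in the even case, as one reads off the recursive definition of the multisets, and similarly when $n$ is odd); this is the one piece of bookkeeping you must make explicit. Once it is in place the proof closes, and the intermediate narrative about ``cyclically rotating until adjacent copies merge'' should simply be deleted: as an induction on plain products of length $r$ it does not work (the merged expression carries higher exponents, so the induction hypothesis would have to be strengthened to products $X_{i_1}^{l_1}\cdots X_{i_r}^{l_r}$), and Lemma \ref{algo} already performs exactly that reduction for you.
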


\begin{prop}\cite[\textbf{Proposition 3.5}]{NourdinDeya}
\label{prop02}
Let $X_1,\dots, X_N$ be centered freely independent random variables and denote by $(\mu_k^{N})$ the corresponding sequence of the largest even moments, that is $\mu_k^{N} = \sup\limits_{\substack{i=1,\dots,N \\l=1,\dots,k}}\varphi(X_i^{2l})$. For $d\geq 1$, and for every integer $N$, consider a homogenous sum $P_N$ with unit-variance, mirror symmetric kernels $g_N:[N]^{d}\rightarrow \mathbb{R}$, vanishing on diagonals. Then, for every integer $r \geq 1$ there exists a constant $C_{r,d}$, only depending on $r$ and $d$, such that:
$$ \varphi\big(P_N(X_1,\dots,X_N)^{2r}\big) \leq C_{r,d}\;\mu_{2^{rd-1}}^{N}\bigg(\sum_{j_1,\dots,j_d=1}^{N}g_N(j_1,\dots,j_d)^{2}\bigg)^{r}.$$
\end{prop}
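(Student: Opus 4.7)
The plan is to bound $\varphi(P_N(X_1,\dots,X_N)^{2r})$ by fully expanding the $2r$-th power and then exploiting the freeness and centering of $\{X_i\}$ to cut down the combinatorics. Concretely, I would start by writing
\[
\varphi\bigl(P_N^{2r}\bigr) = \sum_{\mathbf{j}^{(1)},\dots,\mathbf{j}^{(2r)}\in[N]^d} \Bigl(\prod_{s=1}^{2r} g_N(\mathbf{j}^{(s)})\Bigr)\,\varphi\bigl(X_{j_1^{(1)}}X_{j_2^{(1)}}\cdots X_{j_d^{(2r)}}\bigr),
\]
so that the object to control is a $g_N$-weighted sum of moments of words of length $2rd$ in the $X_i$'s.

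The next step is the key combinatorial reduction. Since the $X_i$ are freely independent and centered, the free moment–cumulant formula expresses each moment $\varphi(X_{i_1}\cdots X_{i_{2rd}})$ as a sum over non-crossing partitions $\pi$ of $\{1,\dots,2rd\}$ with $\pi\leq\ker(\mathbf{i})$ (i.e.\ monochromatic on blocks), of the product $\prod_{B\in\pi}r_{|B|}(X_{i_B})$. Centering kills all partitions having singletons, so every block of $\pi$ has size $\geq 2$; in particular $|\pi|\leq rd$, which means the effective number of distinct indices in any surviving word is at most $rd$, not $2rd$. This is the crucial gain of a factor two in the effective length, and it is exactly what will produce the exponent $2^{rd-1}$ rather than the naive $2^{2rd-1}$ one would obtain by blindly invoking Lemma~\ref{lemma01}.

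With this structural fact in hand, I would for each surviving pattern $\pi$ bound $|\varphi(X_{i_1}\cdots X_{i_{2rd}})|$ by applying the iterated free Hölder inequality (Lemma~\ref{lemma0}) to the at-most-$rd$ distinct variables, together with standard polynomial bounds of free cumulants by moments, yielding a uniform bound of the form $C_{r,d}'\,\mu_{2^{rd-1}}^{N}$. Then I would regroup the $g_N$-weighted sum by the pairing pattern $\pi$ and apply iterated Cauchy–Schwarz to the kernel factors: since $g_N$ vanishes on diagonals and is mirror symmetric, each such regrouping can be identified with a contraction of $g_N$ with itself and estimated by $\|g_N\|_2^{2r}=\bigl(\sum_{j_1,\dots,j_d} g_N(j_1,\dots,j_d)^2\bigr)^r$. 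Since the number of admissible patterns $\pi$ depends only on $r$ and $d$ (being bounded by the number of non-crossing partitions of $[2rd]$ with blocks of size $\geq 2$), summing over patterns yields a combinatorial factor $C_{r,d}$ and the stated inequality.

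The main obstacle is the bookkeeping in the third step: matching the bound on the absolute value of the mixed moment to exactly the exponent $2^{rd-1}$ requires a careful deployment of Lemma~\ref{lemma0} that fully exploits the centering-induced constraint $|\pi|\leq rd$, rather than treating all $2rd$ positions as independent factors. Everything else (the kernel-side Cauchy–Schwarz and the enumeration of admissible $\pi$) is routine once the moment factor has been controlled with the correct power of $\mu_{2^{rd-1}}^{N}$.
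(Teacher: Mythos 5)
The first thing to say is that the paper offers no proof of this statement to compare against: it is imported verbatim from Deya--Nourdin (their Proposition 3.5) and parked in the appendix as a quoted tool. So your argument has to stand on its own. Its architecture is the natural one and most of it is sound: the expansion of $\varphi(P_N^{2r})$, the observation that freeness and centering kill every word in which some index occurs exactly once (so that surviving index maps $\mathbf{i}$ have $\ker(\mathbf{i})$ without singletons, hence at most $rd$ blocks), the kernel estimate $\sum_{\mathbf{j}\,:\,\ker(\mathbf{j})\geq\sigma}\prod_s|g_N(\mathbf{j}^{(s)})|\leq\|g_N\|^{2r}$ for each no-singleton pattern $\sigma$ (generalized H\"older, using that diagonal-vanishing forces each block to meet each kernel factor at most once), and the finite count of patterns. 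The gap is exactly where you flag it, in the moment bound, and the mechanism you propose there would not close it. The exponents produced by the iterated Cauchy--Schwarz lemmas (Lemma \ref{lemma0}, Lemma \ref{lemma01}, Lemma \ref{algo}) are governed by the length and alternation structure of the word, not by the number of blocks of $\ker(\mathbf{i})$; knowing $|\pi|\leq rd$ does not shorten a word of length $2rd$, so that route yields $\mu^{N}_{2^{2rd-1}}$, not $\mu^{N}_{2^{rd-1}}$. Conversely, bounding each free cumulant by moments block by block produces a \emph{product} of up to $rd$ moment factors, i.e.\ something of the shape $(\mu^N_{\cdot})^{rd}$, not a single power. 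Note also that the no-singleton constraint is doing its real work in the kernel estimate (without it the $g_N$-sum is not controlled by $\|g_N\|^{2r}$), not in the moment estimate, so attributing the exponent $2^{rd-1}$ to it is a misdiagnosis.

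The step can be closed, but by an ingredient you do not name: a Lyapunov/H\"older recombination of moments. From the moment--cumulant formula, each surviving moment is $\sum_{\pi}\prod_{B\in\pi}r_{|B|}(X_{i_B})$ over non-crossing $\pi\leq\ker(\mathbf{i})$; each $|r_{n}(X_i)|$ is a finite signed sum of products $\prod_{B'}\varphi(X_i^{|B'|})$ with $\sum|B'|=n$; and $|\varphi(X_i^{j})|\leq\varphi(X_i^{2j})^{1/2}$ together with $\varphi(X_i^{2l})\leq\varphi(X_i^{2L})^{l/L}\leq(\mu_L^{N})^{l/L}$ for $l\leq L$ gives, for any collection of exponents with total degree $2rd$ and $L=rd$, the bound $\prod_j\varphi(X_{i_j}^{2l_j})\leq\mu^{N}_{rd}$, a \emph{single} power because the fractional exponents sum to $1$. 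Hence $|\varphi(X_{i_1}\cdots X_{i_{2rd}})|\leq C_{r,d}\,\mu^{N}_{rd}$, which is at least as strong as the stated $\mu^{N}_{2^{rd-1}}$ since $rd\leq 2^{rd-1}$ and $k\mapsto\mu_k^{N}$ is nondecreasing. With that substitution your outline becomes a proof; as written, the decisive inequality is asserted rather than derived, and the derivation you gesture at would deliver the wrong exponent.
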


\textbf{Acknowledgements:}
The author wishes to express her sincere gratitude and appreciation to Professor Giovanni Peccati, for the invaluable support and for several helpful suggestions.


\begin{thebibliography}{9}

\bibitem{Anshelevich}
M. Anshelevich (2005),
Linearization coefficients for orthogonal polynomials using stochastic processes,
\emph{ Annals of Probability}  \textbf{30}(1) 114-136

\bibitem{BianeSpeicher}
P. Biance, R. Speicher (1998),
Stochastic analysis with respect to free Brownian motion and analysis on Wigner space,
\emph{Probability theory and related fields }\textbf{112} 373-409

\bibitem{Solesne}
S. Bourguin, G.Peccati (2013),
Semicircular limits on the free Poisson chaos: counterexamples to a transfer principle,
arXiv:1307.0976

\bibitem{Chihara}
T.S. Chihara (1978)
\emph{An introduction to Orthogonal polynomials},
 Mathematics and its applications,  13, New York, Gordon and Breach Science Publishers


\bibitem{qbrownian}
A. Deya, S. Norredine, I. Nourdin (2013),
Fourth moment theorem and $q$-Brownian motion,
\emph{Communications in Mathematical Physics}  \textbf{321}(1) 113-134

\bibitem{NourdinDeya}
A. Deya, I. Nourdin (2013),
Invariance principles for homogeneous sums of free random variables,
\emph{Bernoulli}, to appear

\bibitem{NourdinDeya2}
A. Deya, I. Nourdin (2012),
Convergence of Wigner integrals to the Tetilla law,
\emph{ALEA, Lat. Am. J. Probab. Math. Stat.} \textbf{9} 101-127

\bibitem{Kargin}
V. Kargin (2007),
A proof of a non-commutative central limit theorem by the Lindeberg method,
\emph{Electronic Communication in Probability} \textbf{12} 36-50

\bibitem{NourdinPeccatiSpeicher}
T. Kemp, I. Nourdin, G. Peccati, R. Speicher (2011),
Wigner chaos and the fourth moment,
\emph{Annals of Probability}  \textbf{40}(4) 1577-1635

\bibitem{Mossel}
E. Mossel, R. O'Donnell, K. Oleszkiewicz (2010),
Noise stability of functions with low influences: invariance and optimality,
\emph{ Annals of Mathematics} \textbf{171}(1) 295-341

\bibitem{Speicher}
A. Nica, R. Speicher (2006)
\emph{Lectures on the combinatorics of Free probability},
Cambridge University Press

\bibitem{Nualart}
D. Nualart, G. Peccati (2005),
Central limit theorems for sequences of multiple stochastic integrals,
\emph{The Annals of Probability} \textbf{33}(1) 177-193

\bibitem{Peccatilibro}
I. Nourdin, G. Peccati (2012)
\emph{Normal approximations with Malliavin Calculus: from Stein's method to universality},
Cambridge University Press

\bibitem{NourdinPeccati}
I. Nourdin, G. Peccati (2013),
Poisson approximations on the free Wigner chaos,
 \emph{Annals of Probability} \textbf{41}(4)

\bibitem{NourdinPeccati2}
I. Nourdin, G. Peccati (2009),
Noncentral convergence of multiple integrals,
\emph{Annals of Probability} \textbf{37}(4) 1412-1426

\bibitem{NourdinPeccati4}
I. Nourdin, G. Peccati, G. Reinert (2010),
Invariance principles for homogeneous sums: universality of Gaussian Wiener Chaos,
 \emph{The Annals of Probability} \textbf{38}(5) 1947-1985


\bibitem{NouSpeiPec}
I. Nourdin, G. Peccati, R. Speicher (2012),
Multidimensional semicircular limits on the free Wigner chaos,
\emph{The Ascona Proceedings} Birkhäuser (to appear)

\bibitem{PeccatiSole}
G. Peccati, J.L. Solé, M.S. Taqqu, F. Utzet (2010),
Stein's method and normal approximation of Poisson functionals, \emph{ Annals of Probability}, \textbf{38}(2) 443-478.

\bibitem{PeccatiZeng}
G. Peccati, C. Zheng,
Universal Gaussian fluctuations on the discrete Poisson chaos,
\emph{Bernoulli}, to appear

\bibitem{PeccatiZeng2}
G. Peccati, C. Zheng (2010),
Multidimensional Gaussian fluctuations on the Poisson space,
\emph{Electronic Journal of Probability} \textbf{15} 1487-1527

\bibitem{Rotar}
V.I. Rotar' (1979),
Limit theorems for polylinear forms,
\emph{Journal of Multivariate Analysis} \textbf{9}(4) 511-530

\bibitem{Voiculescu}
D. Voiculescu (1985),
\emph{\textrm{Symmetries of some reduced free product $C^{\ast}$-algebras}},
Operator algebras and their connection with topology and ergodic theory, Springer Lecture Notes in Mathematics, \textbf{1132} 556-588

\end{thebibliography}
\end{document}